\newtheorem{theorem}{Theorem}
\newtheorem{remark}{Remark}
\newtheorem{lemma}{Lemma}
\newtheorem{proposition}{Proposition}
\setlist[enumerate]{label=$\rm{(\roman*)}$,leftmargin=\parindent}
\newcommand{\R}{{\mathbb R}}
\DeclareMathOperator{\argmin}{argmin}
\DeclareMathOperator{\prox}{prox}
\newcommand{\cE}{{\mathcal E}}
\newcommand{\cH}{{\mathcal H}}
\newcommand{\cO}{{\mathcal O}}
\newcommand{\demi}{\frac{1}{2}}
\newcommand{\rinf}{\R\cup\{+\infty\}}
\newcommand{\eqdef}{:=}
\newcommand{\norm}[1]{\left\|{#1}\right\|}
\newcommand{\pa}[1]{\left({#1}\right)}
\DeclareMathOperator{\dom}{dom}
\DeclareMathOperator{\Zer}{Zer}
\def\s{\sigma}
\def\<{\langle}
\def\>{\rangle}
\begin{document}

\begin{frontmatter}




\title{Fast convex optimization via time scale and averaging of the  steepest descent}


%

\author[1]{Hedy Attouch\corref{cor1}%
}
\ead{hedy.attouch@umontpellier.fr}
\author[2]{Radu Ioan Bo\c{t}\fnref{fn1,fn2}}
\ead{radu.bot@univie.ac.at}
\author[2]{Dang-Khoa Nguyen\fnref{fn2}}
\ead{dang-khoa.nguyen@univie.ac.at}
\cortext[cor1]{Corresponding author}
\fntext[fn1]{The research of RIB has been supported by FWF (Austrian Science Fund), projects W 1260 and P 34922-N.}
\fntext[fn2]{The research of DKN has been supported by FWF (Austrian Science Fund), projects W 1260.}
\affiliation[1]{organization={IMAG, Univ. Montpellier},
	addressline={CNRS},
	postcode={},
	city={Montpellier},
	country={France}}
\affiliation[2]{organization={Faculty of Mathematics, University of Vienna},
	addressline={Oskar-Morgenstern-Platz 1},
	postcode={1090 Vienna},
	city={Vienna},
	country={Austria}}

\begin{abstract}
In a  Hilbert setting, we develop a  gradient-based dynamic approach for fast solving convex optimization problems. 
By applying time scaling, averaging, and perturbation techniques to the continuous steepest descent (SD), we obtain high-resolution ODEs of the Nesterov and Ravine methods. These dynamics involve asymptotically vanishing viscous damping and Hessian driven damping (either in explicit or implicit form).  Mathematical analysis does not require developing a Lyapunov analysis for inertial systems. We simply exploit classical convergence results for (SD) and its external perturbation version, then use tools of differential and integral calculus, including Jensen's inequality. The method is flexible and by way of illustration we show how it applies starting from other important dynamics in optimization. We consider the case where the initial dynamics is the regularized Newton method, then the case where the starting dynamics is the differential inclusion associated with a convex lower semicontinuous potential, and finally we show that the technique can be naturally extended to the case of a monotone cocoercive operator. Our approach leads to parallel algorithmic results, which we study in the case of fast gradient and proximal algorithms. Our averaging technique shows new links between the Nesterov and Ravine methods.
\end{abstract}

\begin{keyword}

fast convex optimization \sep 
damped inertial dynamic \sep 
time scaling \sep 
averaging \sep 
Nesterov and Ravine algorithms \sep 
Hessian driven damping \sep 
proximal algorithms

\MSC[2020] 37N40 \sep 46N10 \sep 49M30 \sep 65B99 \sep 65K05 \sep 65K10 \sep 90B50 \sep 90C25
\end{keyword}

\end{frontmatter}


\section{Introduction}\label{sec:prel} 
In a real Hilbert space $\cH$, we develop a general dynamic approach
whose objective is the rapid resolution of convex optimization problems via first-order methods.
We consider the   problem
\begin{equation}\label{basic-min}
\min \left\lbrace  f(x) : \, x\in\cH  \right\rbrace,
\end{equation}
where, throughout the paper, we make the following  assumptions on the function $f$ to be minimized
\begin{equation}\label{eq:basic_hypo}
(\mathcal A) \ \begin{cases}
	f: \cH \rightarrow \R  \text{ is a convex function of  class }    {\mathcal C}^1; \, S= \argmin_{\cH} f \neq \emptyset ;
	\vspace{1mm}\\
	\nabla f   \text{ is Lipschitz continuous on the bounded sets of } \cH.
\end{cases}
\end{equation}
Our study is part of the close links between dissipative dynamical systems and optimization algorithms, the latter being obtained by temporal discretization of the continuous dynamics.
The dynamic system approach makes it possible to harmoniously combine the  ingredients of our approach, namely  time scaling, averaging, and perturbation techniques to finally obtain inertial dynamics with proven fast optimization properties.

Let us recall the mainstream for the study of accelerated gradient methods. Then, in the next section, we will describe our approach, which is novel in many aspects. The classical approach can be traced back to Gelfand and Tsetlin \cite{GT}, then Polyak \cite{Polyak1,Polyak2}, and Attouch, Goudou and Redont \cite{AGR}, where fast first-order optimization algorithms are naturally linked to damped inertial dynamics via temporal discretization. In  this approach, which is very inspired by mechanics, PDE's, and control theory  the focus is on the design of the damping term of the inertial dynamic, and its asymptotic stabilization effect.

In recent years, for the minimization of general convex differentiable functions, an in-depth study has been carried out linking the accelerated  gradient method of Nesterov \cite{Nest1,Nest2} to inertial dynamics with vanishing viscosity (the viscous damping coefficient tends to zero as $t\to +\infty$). That is the Su-Boyd-Cand\`es dynamical system \cite{SBC}
\begin{equation}\label{SBC}
\ddot{x}(t) + \frac{\alpha}{t} \dot{x}(t) + \nabla f (x(t)) =0,
\end{equation}
where fast optimization is obtained by taking $\alpha \geq 3$.
In addition to  viscous damping, taking into account geometric damping (in our situation it is driven by the Hessian of the function to be minimized) makes it possible to improve the performance of these methods by attenuating the oscillations. That is the dynamic
\begin{equation}\label{Hessian_damping}
\ddot{x}(t) + \frac{\alpha}{t} \dot{x}(t) +  \beta  \nabla^2  f (x(t)) \dot{x} (t)  + \nabla f (x(t)) =0,
\end{equation}
considered by  Attouch, Peypouquet, and  Redont in \cite{APR2}.
The central role played by these dynamics has been confirmed by their recent  interpretation as the low and high resolution of the Nesterov and Ravine accelerated gradient methods, see Shi, Du, Jordan and Su \cite{SDJS},  Attouch, Chbani, Fadili, and Riahi \cite{ACFR,ACFR-Opti}.
The study of the fast convergence properties of these dynamics and algorithms is based on Lyapunov analysis.
But this approach  does not give a simple explanation of the choice of dynamics and algorithms. Moreover the Lyapunov analysis is not trivial, and there is no general rule to find the Lyapunov functions.
Still, these techniques have allowed a deep understanding of dynamics and accelerated algorithms for optimization,
see \cite{ABCR, ABC, AC1,AC2, ACFR, ACPR, ACR-rescale, AF, AP, BCL, Bot-Nguyen, CEG1, CBFP, CD, SBC, SDJS} for some recent contributions.

\section{Time scaling and averaging approach}
We propose a new approach to these questions which is based on a combination of time scaling, averaging, and perturbation techniques.
The basis property which underlies our approach is the fact that by time scaling one can always accelerate a dynamic. The kinematic interpretation is clear. By changing the time clock the trajectories are travelled more or less quickly.
The averaging technique shifts from a dynamic of order $n$ to a dynamic of the immediately higher order $n+1$. While preserving the asymptotic convergence properties, this makes it possible to achieve a dynamic where the coefficient in front of the gradient is fixed, and therefore suitable for developing an accelerated gradient algorithm. 
The perturbation technique gives flexibility to these methods,  exploiting the fact that the convergence properties are preserved  when adding an external perturbation which vanishes asymptotically sufficiently fast.

As the basic starting dynamic used in this paper, we consider the classical continuous steepest descent
\begin{equation}\label{SD}
{\rm (SD)} \quad   \dot{z}(t) + \nabla f (z(t)) =0.
\end{equation}
Under the standing assumption $(\mathcal A)$ on $f$  we know that, for any $z_0 \in \cH$, there exists a unique classical global solution $z\in \mathcal C^1([t_0, +\infty[: \cH )$  of (SD) satisfying  $z(t_0 )= z_0$, see \cite[Theorem 17.1.1]{ABM_book}. We fix $t_0 >0$ as the origin of time (because of the singularity at the origin of  coefficient $\frac{\alpha}{t} $ which enter into our analysis).

\subsection{Classical facts concerning the continuous steepest descent} 

We will have to consider the gradient system with an external perturbation. The following theorem investigates its asymptotic properties under appropriate conditions on the perturbation term. The proof is provided in the Appendix.
\begin{theorem}\label{theorem SD pert}
Suppose that  $f \colon \cH \to \R$ satisfies $(\mathcal A)$.
Let $z \colon \left[ t_{0} , + \infty \right[ \to \cH$ be a solution trajectory of 	
\begin{equation}\label{pert SD}	
	\dot{z}(t) + \nabla f( z(t)) = g(t),
\end{equation}
where  $g \colon \left[ t_{0} , + \infty \right[ \to \cH$ is such that
\begin{equation}\label{pert 01}
	\int_{t_{0}}^{+ \infty} \left\lVert g \left( t \right) \right\rVert dt < + \infty  \textrm{ and } \int_{t_{0}}^{+ \infty} t \left\lVert g \left( t \right) \right\rVert ^{2} dt < + \infty .
\end{equation}
Then the following statements are true:
\begin{enumerate}
	\item \label{SD pert:int-vel} 
	(integral estimate of the velocities)
	$
	\int_{t_0}^{+\infty} t \left\lVert \dot{z} \left( t \right) \right\rVert ^{2} dt < + \infty;
	$
	\item \label{SD pert:int-grad} 
	(integral estimate of the gradients)
	$
	\int_{t_0}^{+\infty} t \left\lVert \nabla f \left( z \left( t \right) \right) \right\rVert ^{2}  dt < +\infty;
	$		
	\item \label{SD pert:int-fun} 
	(integral estimate of the values)
	$
	\int_{t_0}^{+\infty} \left( f \left( z \left( t \right) \right) -\inf_{\cH} f \right) dt < +\infty;
	$
	
	\item \label{SD pert:rate-fun} 
	(convergence of values towards minimal value)
	$
	f \left( z \left( t \right) \right) -\inf_{\cH} f = o \left( \frac{1}{t} \right) \textrm{ as } t \to + \infty;
	$
	
	\item \label{SD pert:traj}
	the solution trajectory $z(t)$ converges weakly as $t \to +\infty$, and its limit belongs to $S=\argmin f$.
	
	If
	\begin{equation*}
		\int_{t_{0}}^{+ \infty} t^{2} \left\lVert g \left( t \right) \right\rVert ^{2} dt < + \infty,
	\end{equation*}
	then 
	\item \label{SD pert:rate-grad}
	(convergence of gradients towards zero)	\quad 
	$
	\left\lVert \nabla f \left( z \left( t \right) \right) \right\rVert = o \left( \frac{1}{t} \right) \textrm{ as } t \to + \infty.
	$
\end{enumerate}
\end{theorem}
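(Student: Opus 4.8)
The plan is to route everything through a single first--order Lyapunov function, using only classical facts about \eqref{pert SD} and elementary differential/integral calculus; no inertial Lyapunov analysis is needed. Fix $z_{*}\in S$, write $f_{*}=\inf_{\cH}f$ and $p(t)=f(z(t))-f_{*}\ge 0$. First I would establish boundedness of the trajectory. From \eqref{pert SD} and the subgradient inequality $\dotp{\nabla f(z(t))}{z(t)-z_{*}}\ge p(t)$,
\[
\tfrac12\tfrac{d}{dt}\norm{z(t)-z_{*}}^{2}=\dotp{z(t)-z_{*}}{-\nabla f(z(t))+g(t)}\le -p(t)+\norm{z(t)-z_{*}}\norm{g(t)},
\]
so a classical Gronwall--Bihari lemma and the first condition in \eqref{pert 01} give $\norm{z(t)-z_{*}}\le\norm{z(t_{0})-z_{*}}+\int_{t_{0}}^{+\infty}\norm{g}=:M$ for all $t\ge t_{0}$. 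Hence the trajectory remains in $\overline{B}(z_{*},M)$, a ball on which $\nabla f$ is Lipschitz with some constant $L_{M}$.

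Next I would differentiate the energy $W(t):=t\,p(t)+\tfrac12\norm{z(t)-z_{*}}^{2}\ge 0$. Using $\dot p(t)=\dotp{\nabla f(z(t))}{\dot z(t)}=-\norm{\nabla f(z(t))}^{2}+\dotp{\nabla f(z(t))}{g(t)}$, the displayed bound on $\tfrac{d}{dt}\norm{z-z_{*}}^{2}$, and Young's inequality on $t\dotp{\nabla f(z)}{g}$, one gets
\[
\dot W(t)\le -\tfrac{t}{2}\norm{\nabla f(z(t))}^{2}+\tfrac{t}{2}\norm{g(t)}^{2}+M\norm{g(t)} .
\]
Integrating from $t_{0}$ and invoking \eqref{pert 01}, $W$ stays bounded and $\int_{t_{0}}^{+\infty}t\norm{\nabla f(z(t))}^{2}dt<+\infty$, which is item~\ref{SD pert:int-grad}; item~\ref{SD pert:int-vel} follows from $\norm{\dot z}^{2}\le 2\norm{\nabla f(z)}^{2}+2\norm{g}^{2}$. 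Integrating instead $\tfrac{d}{dt}\norm{z-z_{*}}^{2}\le -2p(t)+2M\norm{g(t)}$ gives $\int_{t_{0}}^{+\infty}p(t)\,dt<+\infty$, i.e.\ item~\ref{SD pert:int-fun}.

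For item~\ref{SD pert:rate-fun}, the estimate $\dot p(t)\le-\norm{\nabla f(z)}^{2}+\norm{\nabla f(z)}\norm{g}\le\tfrac14\norm{g(t)}^{2}$ shows that $t\mapsto p(t)-\tfrac14\int_{t_{0}}^{t}\norm{g}^{2}$ is nonincreasing; it is bounded below (since $\int\norm{g}^{2}<+\infty$ because $t_{0}>0$), hence has a limit $\ell$, so $p$ has a limit, which is $0$ by item~\ref{SD pert:int-fun}. Then $0\le p(t)\le p(t)-\tfrac14\int_{t_{0}}^{t}\norm{g}^{2}-\ell$, and this last function is nonnegative, nonincreasing and — by Fubini and \eqref{pert 01} — integrable on $[t_{0},+\infty[$; the elementary fact that a nonnegative nonincreasing integrable function is $o(1/t)$ then yields $p(t)=o(1/t)$. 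For item~\ref{SD pert:traj} I would apply Opial's lemma: $\lim_{t\to+\infty}\norm{z(t)-z_{*}}$ exists for every $z_{*}\in S$ (same computation as for boundedness, the perturbation term being integrable), while every weak sequential cluster point of the bounded trajectory lies in $S$ by weak lower semicontinuity of $f$ together with $p(t)\to 0$.

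The implication giving item~\ref{SD pert:rate-grad} is the delicate part. Put $\cF(t):=\tfrac12\norm{\nabla f(z(t))}^{2}$. Since the trajectory stays in $\overline{B}(z_{*},M)$ where $\nabla f$ is Lipschitz, $t\mapsto\nabla f(z(t))$ is absolutely continuous, hence a.e.\ differentiable with derivative $w(t)$ and $\dot\cF(t)=\dotp{\nabla f(z(t))}{w(t)}$ a.e. The crucial point is to exploit the \emph{co-coercivity} (Baillon--Haddad) of $\nabla f$ on $\overline{B}(z_{*},M)$, not just its monotonicity: from $\dotp{\nabla f(z(t+h))-\nabla f(z(t))}{z(t+h)-z(t)}\ge\tfrac1{L_{M}}\norm{\nabla f(z(t+h))-\nabla f(z(t))}^{2}$, dividing by $h^{2}$ and letting $h\to0$ gives $\dotp{w(t)}{\dot z(t)}\ge\tfrac1{L_{M}}\norm{w(t)}^{2}$ a.e. Substituting $\nabla f(z(t))=g(t)-\dot z(t)$ and maximizing the resulting bound over $\norm{w(t)}$,
\[
\dot\cF(t)=\dotp{g(t)}{w(t)}-\dotp{\dot z(t)}{w(t)}\le\norm{g(t)}\norm{w(t)}-\tfrac1{L_{M}}\norm{w(t)}^{2}\le\tfrac{L_{M}}{4}\norm{g(t)}^{2}.
\]
Thus $\cF(t)\le\cF(s)+\tfrac{L_{M}}{4}\int_{s}^{t}\norm{g}^{2}$ for $t_{0}\le s\le t$; averaging over $s\in[t/2,t]$ and using $s\ge t/2$ to bound $\int_{t/2}^{t}\cF(s)\,ds\le\tfrac2t\int_{t/2}^{t}s\cF(s)\,ds$ and $\int_{t/2}^{t}\norm{g}^{2}\le\tfrac4{t^{2}}\int_{t/2}^{t}s^{2}\norm{g}^{2}$, one obtains
\[
t^{2}\cF(t)\le 4\int_{t/2}^{+\infty}s\,\cF(s)\,ds+L_{M}\int_{t/2}^{+\infty}s^{2}\norm{g(s)}^{2}\,ds\ \xrightarrow[\,t\to+\infty\,]{}\ 0,
\]
the first tail vanishing by item~\ref{SD pert:int-grad} and the second by the extra hypothesis $\int t^{2}\norm{g}^{2}<+\infty$; this is exactly item~\ref{SD pert:rate-grad}. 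I expect the main obstacles to lie precisely here: justifying the a.e.\ differentiability of $t\mapsto\nabla f(z(t))$ and the passage to the limit in the co-coercivity inequality (and, upstream, the exact notion of solution and the regularity required on $g$), together with the observation that mere monotonicity — which would give only $\dot\cF\le\norm g\norm w$ — is insufficient to close the $o(1/t)$ rate for the gradient.
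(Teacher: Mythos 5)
Your proof is correct and follows essentially the same route as the paper's: the anchor function $\tfrac12\lVert z(t)-z_{*}\rVert^{2}$ plus Gronwall's lemma for boundedness, the weighted energy $t\left(f(z(t))-\inf_{\cH}f\right)+\tfrac12\lVert z(t)-z_{*}\rVert^{2}$ for the integral estimates, Opial's lemma for weak convergence, and the infinitesimal cocoercivity inequality $\left\langle \tfrac{d}{dt}\nabla f(z(t)),\dot z(t)\right\rangle \geq \tfrac{1}{L}\left\lVert \tfrac{d}{dt}\nabla f(z(t))\right\rVert^{2}$ (with $L$ the Lipschitz constant on a ball containing the trajectory) for item \ref{SD pert:rate-grad}. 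The only differences are in bookkeeping: you absorb the perturbation via Young's inequality instead of folding $\int_{t}^{T}\langle\cdot,g\rangle$ into the energy, and you extract the $o(1/t)$ and $o(1/t^{2})$ rates by dyadic averaging of a nonincreasing integrable majorant rather than by invoking \cite[Lemma 5.1]{AAS} as the paper does --- both are equivalent elementary devices.
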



\subsection{General time scaling and averaging}\label{sec:time_scale}
Let us make the change of time variable $t=\tau (s)$ in the dynamic (SD),
\begin{equation}
{\rm(SD)} \qquad \dot{z}(t) + \nabla f( z(t)) = 0,
\end{equation}
where $\tau (\cdot)$ is an increasing  function from $\mathbb R^+$ to $\mathbb R^+$, which is continuously differentiable and satisfies $\lim_{s \to +\infty}\tau (s) = + \infty$. 
Set $y(s):= z(\tau(s))$ and $s_{0}$ be such that $t_{0} = \tau \left( s_{0} \right)$.
On the one hand, by the derivation chain rule, we have
\begin{equation}\label{change var01}
\dot{y} (s)= \dot{\tau}(s) \dot{z}(\tau(s)).
\end{equation}
On the other hand, setting $t=\tau (s)$ in (SD) gives
\begin{equation}\label{change var1}
\dot{z}(\tau(s))  + \nabla f(z(\tau(s))) =0.
\end{equation}
According to (\ref{change var01}) and (\ref{change var1}), we obtain
\begin{equation}\label{change var2-b}
\dot{y} (s)      +   \dot{\tau}(s) \nabla f(y(s)) =0.
\end{equation}
The convergence rate becomes (take $t=\tau(s)$ in \ref{SD pert:rate-fun} of Theorem \ref{theorem SD pert} with $g \equiv 0$)
\begin{equation}\label{SD_rescale_1}
f (y(s))- \inf_{\cH} f = o \left( \dfrac{1}{\tau(s)} \right) \textrm{ as } s \to + \infty.
\end{equation}

By introducing a function $\tau(s)$ that grows faster than the identity (namely $\tau(s) \geq s$), we have accelerated the dynamic, passing from the asymptotic convergence rate $1/t$ for (SD) to $1/\tau(s)$ for \eqref{change var2-b}.
The price to pay is that we no longer have an autonomous dynamic in \eqref{change var2-b}, with as major drawback the fact that the coefficient in front of the gradient term tends to infinity as $s \to +\infty$. 
This prevents from using gradient methods to discretize it. Recall that for gradient methods the step size has to be less than or equal to twice the inverse of the Lipschitz constant of the gradient. To overcome this we come with the second step of our method which is averaging.

Several recent papers have been devoted to the role of the  averaging techniques in the acceleration of optimization algorithms. In \cite{Nest3} Nesterov designed an algorithm that makes use of an averaging step for the gradients of the generated sequence. In \cite{SAB} Scieur, D'Aspremont, and Bach developed a universal acceleration method based on averaging via polynomial techniques. In \cite{PL} Poveda and Li developed another  universal acceleration method  via averaging technique for singularly perturbed hybrid dynamical systems with restarting mechanisms.
Let us explain some simple  and, as far as we know, new ideas concerning the averaging methods based on  differential equations. The use of differential equations allows us to use differential and integral calculus, which turns out to be a flexible approach.

%
Let us attach to $y(\cdot)$ the new function $x: [s_{0}, +\infty[ \to \cH$ defined by 
\begin{equation}\label{change var24}
\dot{x}(s) + \frac{1}{\dot{\tau}(s)}(x(s)-y(s))  = 0 ,
\end{equation}
with $x(s_{0}) =x_0$ given in $\cH$.
Equivalently
\begin{equation}\label{change var240}
y(s)=   x(s)+  \dot{\tau}(s) \dot{x}(s)  .
\end{equation}

\noindent By temporal derivation of  \eqref{change var240} we get
\begin{equation}\label{change var25}
\dot{y}(s) =  \dot{x}(s)+ \ddot{\tau}(s) \dot{x}(s)  + \dot{\tau}(s) \ddot{x}(s)   .
\end{equation}

\noindent Replacing $\dot{y}(s)$  as given by \eqref{change var25} in 
\eqref{change var2-b} we get
\begin{equation}\label{change var26}
\dot{\tau}(s) \ddot{x}(s)   + (1+ \ddot{\tau}(s)) \dot{x}(s) + \dot{\tau}(s)\nabla f(y(s))      = 0 .
\end{equation}
After simplification we obtain
\begin{equation}\label{change var27}
\ddot{x}(s) + \dfrac{1+ \ddot{\tau}(s)}{\dot{\tau}(s)}\dot{x}(s) + \nabla f\Big(x(s)+   \dot{\tau}(s) \dot{x}(s) \Big)  = 0 .
\end{equation}
In doing so, we passed from the  first-order  differential equation    \eqref{change var2-b} to the second-order differential equation   \eqref{change var27}, with the advantage that now the coefficient in front of the gradient is fixed.
Let us now particularize the time scale $\tau (\cdot)$.

\paragraph{Open loop control: link with Nesterov method}

According to the Su, Boyd and Cand\`es \cite{SBC} model for the Nesterov method we consider the case where the viscous damping coefficient in \eqref{change var27} satisfies
\begin{equation}\label{change var270}
\dfrac{1+ \ddot{\tau}(s)}{\dot{\tau}(s)} = \dfrac{\alpha}{s} .
\end{equation}
Set $\theta (s)= \dot{\tau}(s)$. We are led to solve the fist-order linear differential equation
\begin{equation}\label{change var271}
\dot{\theta}(s)  - \dfrac{\alpha}{s} \theta (s)=-1.
\end{equation}
After multiplication by $s^{-\alpha}$, we get equivalently
\begin{equation}\label{change var272}
\dfrac{d}{ds}\left( s^{-\alpha} \theta (s)  \right) = -s^{-\alpha}.
\end{equation}
By integrating we get
\begin{equation}\label{change var273}
s^{-\alpha} \theta (s)   = -\dfrac{1}{-\alpha +1} s^{-\alpha +1} + C_0,
\end{equation}
which gives
\begin{equation}\label{change var274}
\theta (s)   = \dfrac{s}{\alpha -1}  + C_0 s^{\alpha} .
\end{equation}
According to $\theta (s)= \dot{\tau}(s)$, we finally get
\begin{equation}\label{change var275}
\tau (s)   = \dfrac{s^2}{2(\alpha -1)}  + {C_1} s^{\alpha +1} + {C_2},
\end{equation}
for ${C_1,C_2}$ real constants.
This leads to the choice $\alpha >1$. A particular and relatively simple situation is to take $C_1=C_2=0$, which gives
\begin{equation}\label{change var275b}
\tau (s)   = \dfrac{s^2}{2(\alpha -1)} .
\end{equation}
Equation \eqref{change var2-b} becomes
\begin{equation}\label{change var2-bb}
\dot{y} (s)      +   \frac{s}{\alpha -1} \nabla f(y(s)) =0,
\end{equation}
and \eqref{SD_rescale_1} gives
\begin{equation}\label{SD_rescale_11}
f (y(s))- \inf_{\cH} f = o \left( \dfrac{1}{s^{2}} \right) \textrm{ as } s \to + \infty.
\end{equation}
From \eqref{change var27} we obtain
\begin{equation}\label{change var27 new}
\ddot{x}(s) + \frac{\alpha}{s}\dot{x}(s) + \nabla f\left(x(s)+   \frac{s}{\alpha -1}\dot{x}(s) \right)  = 0 .
\end{equation}
In doing so, we passed from the  first-order  differential equation    \begin{equation}\label{change var2-bbb}
\dot{y} (s)      +   \frac{s}{\alpha -1} \nabla f(y(s)) =0
\end{equation}
to the second-order differential equation   \eqref{change var27 new}, with the advantage that now the coefficient in front of the gradient is fixed.
Of course, we have to prove that the fast convergence rates are preserved.

The above dynamic \eqref{change var27 new} is a particular case of the Inertial System with Implicit Hessian Damping 
\begin{equation}
{\rm (ISIHD)}\quad\ddot{x}(s)+\frac{\alpha}{s} \dot{x}(s)+\nabla f\Big(x(s)+\beta(s)\dot{x}(s)\Big)=0 ,
\end{equation}
considered by Alecsa, L\'aszl\'o, and Pin\c ta in \cite{ALP}, see also Attouch, Fadili, and Kungurtsev \cite{AFK} in the perturbed case.
The rationale justifying the use of the term ``implicit" comes from the observation that by Taylor expansion (as $s \to +\infty$ we have $\dot{x}(s) \to 0$) one has
\[
\nabla f\Big(x(s)+\beta(s)\dot{x}(s)\Big)\approx \nabla f (x(s)) + \beta(s)\nabla^2 f(x(s))\dot{x}(s) ,
\]
hence making the Hessian damping appear indirectly. 
We can therefore expect the dynamic \eqref{change var27 new} to have an asymptotic behavior similar to that of
\begin{equation}\label{change var277}
\ddot{x}(s) + \frac{\alpha}{s}\dot{x}(s) +  \frac{s}{\alpha -1} \nabla^2 f(x(s))\dot{x}(s)+ \nabla f\left(x(s) \right) 
= 0 .
\end{equation}

\paragraph{Closed loop control}
The continuous steepest descent system (SD) provides several quantities which are strictly increasing and converge to $+ \infty$, so they are eligible for time scaling. 
Since $\| \nabla f(z(t)) \| $ is monotonically decreasing to zero, taking 
\vspace{-10pt}
\begin{equation*}
t = \tau(s) = \dfrac{1}{\| \nabla f(z(s)) \|^p},
\end{equation*}
is eligible for any $p>0$. According to (SD) we have that the same holds true for
$$
t = \tau(s) = \dfrac{1}{\| \dot{z}(s) \|^p}. 
$$
Since $\tau$ comes into the scaling technique via its derivative, we can also consider for $p>0$
$$
t = \dot{\tau}(s) = \dfrac{1}{\| \dot{z}(s) \|^p}. 
$$
Alternatively, we may also consider building $\tau$ with the help of the function values since $f(z(\cdot))$ is decreasing. 

All these choices will lead to closed-loop dynamical systems which have attracted of many researchers recently, see for instance \cite{ABC, LJ}.

\subsection{Convergence rates}\label{subsec:rates}

The above results suggest that similar results are valid for  \eqref{change var27}.
We will examine the convergence rates satisfied by the trajectories of \eqref{change var27} simply by using  time scale and averaging arguments. We do not make any other Lyapunov analysis, we just use the Lyapunov analysis of the steepest descent, which was recalled in Subsection 2.1. Let us state our first result which concerns the implicit Hessian driven damping dynamics.
\begin{theorem}\label{Thm-rescale-averaging-implicit}
{Suppose that  $f \colon \cH \to \R$ satisfies $(\mathcal A)$.}	
Let $x: [s_{0}, +\infty[ \to \cH$ be a solution trajectory of 	
\begin{equation}\label{change var27_thm}
	\ddot{x}(s) + \frac{\alpha}{s}\dot{x}(s) + \nabla f\left(x(s)+   \frac{s}{\alpha -1}\dot{x}(s) \right)  = 0 .
\end{equation}
{Assume that $\alpha > 1$. Then} the following statements are true:
\begin{enumerate}		
	\item \label{rescale-averaging:int-grad} (integral estimate of the gradients) \quad 
	$
	\int_{s_{0}}^{+\infty} s^3 \left\lVert \nabla f \left(x(s) + \frac{s}{\alpha -1}\dot{x}(s)\right) \right\rVert ^{2}  ds < +\infty;
	$
	
	\item \label{rescale-averaging:rate-grad} (convergence  of  gradients towards zero) \quad
	$
	\left\lVert \nabla f \left(x(s) + \frac{s}{\alpha -1}\dot{x}(s)\right) \right\rVert = o \left( \frac{1}{s^{2}} \right) \textrm{ as } s \to + \infty;
	$		
	
	
	\item \label{rescale-averaging:rate-vel} (convergence of velocities towards zero) \quad 
	$
	\left\lVert \dot{x} \left( s \right) \right\rVert = o \left( \frac{1}{s} \right) \textrm{ as } s \to + \infty;
	$						
	
	\item \label{rescale-averaging:traj}
	the solution trajectory $x(s)$ converges weakly as $s \to +\infty$, and its limit belongs to $S=\argmin f$.\vspace{1ex}
	
	If $\alpha>3$, then
	\item \label{rescale-averaging:rate-fun} (convergence of values towards minimal value) \quad
	$
	f(x(s)) -\inf_{\cH} f = o \left( \frac{1}{s^{2}} \right) \textrm{ as } s \to + \infty.
	$	
\end{enumerate}
\end{theorem}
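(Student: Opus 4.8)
The plan is to reverse the construction of Subsection~\ref{sec:time_scale}: from the given trajectory $x$ of \eqref{change var27_thm} I would recover a solution $z$ of the plain steepest descent (SD), invoke Theorem~\ref{theorem SD pert} for $z$ with $g\equiv 0$, and then transport every conclusion back through the time change. Concretely, introduce $y(s):=x(s)+\frac{s}{\alpha-1}\dot x(s)$, which is exactly the averaging relation \eqref{change var240} for $\dot\tau(s)=\frac{s}{\alpha-1}$. Differentiating this identity and eliminating $\ddot x(s)$ by means of \eqref{change var27_thm} (the computation leading from \eqref{change var240} to \eqref{change var27}, read backwards) gives $\dot y(s)+\frac{s}{\alpha-1}\nabla f(y(s))=0$, i.e.\ \eqref{change var2-bb}. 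Taking $\tau(s):=\frac{s^2}{2(\alpha-1)}$ as in \eqref{change var275b} — an increasing $\mathcal C^1$ bijection of $[s_0,+\infty[$ onto $[t_0,+\infty[$ with $t_0:=\tau(s_0)$ — and setting $z:=y\circ\tau^{-1}$, the chain rule together with $\dot y(s)=-\frac{s}{\alpha-1}\nabla f(y(s))$ yields $\dot z(t)+\nabla f(z(t))=0$ on $[t_0,+\infty[$. Hence $z$ is a solution trajectory of (SD), and Theorem~\ref{theorem SD pert} applies with $g\equiv 0$ (all its integrability hypotheses, including $\int_{t_0}^{+\infty}t^2\norm{g(t)}^2\,dt<+\infty$, hold trivially), so all of its conclusions are available for $z$.

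The estimates \ref{rescale-averaging:int-grad} and \ref{rescale-averaging:rate-grad} then drop out of the substitution $t=\tau(s)$, $dt=\frac{s}{\alpha-1}\,ds$, using $\nabla f(z(\tau(s)))=\nabla f(y(s))=\nabla f\pa{x(s)+\frac{s}{\alpha-1}\dot x(s)}$: conclusion~\ref{SD pert:int-grad} of Theorem~\ref{theorem SD pert} turns into $\int_{s_0}^{+\infty}s^3\norm{\nabla f(y(s))}^2\,ds<+\infty$, and conclusion~\ref{SD pert:rate-grad} (available precisely because $g\equiv 0$) turns into $\norm{\nabla f(y(s))}=o(1/\tau(s))=o(1/s^2)$. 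The same substitution in conclusions~\ref{SD pert:rate-fun} and \ref{SD pert:traj} gives the two auxiliary facts I will need below: $f(y(s))-\inf_{\cH}f=o(1/s^2)$, and $y(s)=z(\tau(s))\rightharpoonup\bar z\in S$ as $s\to+\infty$.

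For \ref{rescale-averaging:rate-vel}, I would rewrite \eqref{change var24} as $\dot x(s)=-\frac{\alpha-1}{s}(x(s)-y(s))$, so that it suffices to prove $u(s):=\norm{x(s)-y(s)}\to 0$. From the rescaled equation and \ref{rescale-averaging:rate-grad} we have $\norm{\dot y(s)}=\frac{s}{\alpha-1}\norm{\nabla f(y(s))}=o(1/s)$. Since $u(s)\dot u(s)=\dotp{\dot x(s)-\dot y(s)}{x(s)-y(s)}$ and $\dot x(s)-\dot y(s)=-\frac{\alpha-1}{s}(x(s)-y(s))-\dot y(s)$, we obtain $\dot u(s)\le-\frac{\alpha-1}{s}u(s)+\norm{\dot y(s)}$ wherever $u>0$ (and in general after the usual regularization); multiplying by the integrating factor $s^{\alpha-1}$, $\frac{d}{ds}\pa{s^{\alpha-1}u(s)}\le s^{\alpha-1}\norm{\dot y(s)}$. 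Since $s^{\alpha-1}\norm{\dot y(s)}=o(s^{\alpha-2})$ and $\alpha>1$, integration shows the right-hand side is $o(s^{\alpha-1})$, hence $u(s)\to 0$; therefore $\norm{\dot x(s)}=\frac{\alpha-1}{s}u(s)=o(1/s)$. Combining $u(s)\to 0$ with $y(s)\rightharpoonup\bar z\in S$ gives $x(s)=y(s)+(x(s)-y(s))\rightharpoonup\bar z\in S$, which is \ref{rescale-averaging:traj}.

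Finally, for \ref{rescale-averaging:rate-fun} (case $\alpha>3$) I would use $x(s)=y(s)-\frac{s}{\alpha-1}\dot x(s)$ and convexity of $f$, $f(y(s))\ge f(x(s))+\dotp{\nabla f(x(s))}{y(s)-x(s)}$, which with $w(s):=f(x(s))-\inf_{\cH}f\ge 0$ reads
\[
w(s)+\frac{s}{\alpha-1}\frac{d}{ds}f(x(s))\le f(y(s))-\inf_{\cH}f .
\]
Multiplying by $(\alpha-1)s^{\alpha-2}$ turns the left-hand side into $\frac{d}{ds}\pa{s^{\alpha-1}w(s)}$; integrating from $s_0$ to $s$ and using $\sigma^{\alpha-2}\pa{f(y(\sigma))-\inf_{\cH}f}=o(\sigma^{\alpha-4})$ together with $\alpha>3$ (so that $\int_{s_0}^s\sigma^{\alpha-4}\,d\sigma=O(s^{\alpha-3})$ and the estimate ``$o$'' survives integration) yields $s^{\alpha-1}w(s)\le O(1)+o(s^{\alpha-3})$, whence $w(s)\le O(s^{-(\alpha-1)})+o(s^{-2})=o(1/s^2)$. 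I expect the main obstacle to be these two Gr\"onwall-type steps — choosing the integrating factors, justifying the differentiation of $\norm{x(s)-y(s)}$ at its zeros, and the asymptotic bookkeeping with $o(\cdot)$ under the integral sign — where it is precisely the threshold $\alpha>3$ that makes $\int^s\sigma^{\alpha-4}\,d\sigma$ grow fast enough to dominate the factor $s^{\alpha-1}$; everything else is a routine change of variables.
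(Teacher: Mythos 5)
Your proposal is correct, and its global strategy coincides with the paper's: invert the time-scaling-and-averaging construction to recover a solution of (SD), apply Theorem~\ref{theorem SD pert} with $g\equiv 0$, and transport the estimates back through $t=\tau(s)=\frac{s^2}{2(\alpha-1)}$. Items \ref{rescale-averaging:int-grad} and \ref{rescale-averaging:rate-grad} are proved exactly as in the paper. Where you genuinely diverge is in the treatment of \ref{rescale-averaging:rate-vel}, \ref{rescale-averaging:traj} and \ref{rescale-averaging:rate-fun}: the paper integrates the averaging ODE explicitly, writes $x(s)=\int_{s_0}^{s}y(u)\,d\mu_s(u)+\xi(s)$ for the probability measure $\mu_s=\frac{s_0^{\alpha-1}}{s^{\alpha-1}}\delta_{s_0}+(\alpha-1)\frac{u^{\alpha-2}}{s^{\alpha-1}}du$, and then uses integration by parts (for the velocity), the principle that convergence implies ergodic convergence (for the trajectory), and Jensen's inequality plus a local Lipschitz bound to dispose of the perturbation $\xi$ (for the values). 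You instead run pointwise differential inequalities against the integrating factor $s^{\alpha-1}$: the Gr\"onwall bound $\frac{d}{ds}\bigl(s^{\alpha-1}\lVert x-y\rVert\bigr)\le s^{\alpha-1}\lVert\dot y\rVert$ for the velocity, the strong convergence $x(s)-y(s)\to 0$ combined with $y(s)\rightharpoonup\bar z$ for the trajectory, and $\frac{d}{ds}\bigl(s^{\alpha-1}w(s)\bigr)\le(\alpha-1)s^{\alpha-2}\bigl(f(y(s))-\inf_{\cH}f\bigr)$ for the values. The resulting integral bounds are in substance the same ones the paper obtains (and both close with Lemma~\ref{lem lim-0}), but your differential route buys two simplifications: the initial data enter only through the constant $s_0^{\alpha-1}w(s_0)$, so the correction term $\xi$ and the argument via Lipschitz continuity of $f$ on bounded sets disappear entirely from item \ref{rescale-averaging:rate-fun}; and item \ref{rescale-averaging:traj} follows from weak-plus-strong convergence rather than from an ergodic-convergence lemma. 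The only points requiring care, which you correctly flag, are the differentiation of $\lVert x(s)-y(s)\rVert$ at its zeros and the passage of the $o(\cdot)$ through the integral, the latter being exactly Lemma~\ref{lem lim-0} with $p=\alpha-1$ and $p=\alpha-3$ respectively (the second being where $\alpha>3$ is used).
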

\begin{proof}
First observe that the time scaling and averaging operations allow us to obtain any solution trajectory of \eqref{change var27_thm}. To this end we need to choose adequately the initial data in the first order evolution equations \eqref{change var2-bbb} and \eqref{change var24} which are respectively attached to the scaling and averaging  operations. 
So let us give a solution trajectory $x(\cdot)$ of \eqref{change var27_thm} which satisfies the Cauchy data $x(s_{0}) =x_0$ and $\dot{x}(s_{0})=x_1  $.
Let us verify that this solution is reached by taking  successively	
\begin{equation}\label{eq:change var2-bbbb}
	\begin{cases}
		\dot{y} (s)      +   \frac{s}{\alpha -1} \nabla f(y(s)) =0  \hspace{7mm}
		\vspace{2mm}\\
		y(s_{0})= x_0 + \frac{s_{0}}{\alpha -1}x_1,
	\end{cases}
\end{equation}
then 
\begin{equation}\label{eq:change var24_b}
	\begin{cases}
		\dot{x}(s) + \frac{\alpha -1}{s}(x(s)-y(s))  = 0 
		\vspace{2mm}\\
		x(s_{0})= x_0.
	\end{cases}
\end{equation}
Indeed, \eqref{eq:change var24_b} gives $\dot{x}(s_{0}) = \frac{\alpha -1}{s_{0}}(y(s_{0})-x(s_{0}))$, which, by the second equation of \eqref{eq:change var2-bbbb}, leads to $\dot{x}(s_{0}) =x_1$.

Let us first interpret the passage from $y$ to $x$ as an averaging process.
To this end rewrite \eqref{eq:change var24_b}  as
\begin{equation}\label{change var28}
	s \dot{x}(s) + (\alpha -1) x(s) = (\alpha -1) y(s).
\end{equation}

\noindent After multiplication of  \eqref{change var28} by $s^{\alpha -2}$, we get equivalently
\begin{equation}\label{change var29}
	s^{\alpha -1} \dot{x}(s) + (\alpha -1)s^{\alpha -2} x(s) = (\alpha -1)s^{\alpha -2} y(s),
\end{equation}
that is
\begin{equation}\label{change var30}
	\frac{d}{ds} \left( s^{\alpha -1}x(s)\right)  = (\alpha -1)s^{\alpha -2} y(s).
\end{equation}
By integrating \eqref{change var30} from $s_{0}$ to $s$, and according to  $ x(s_{0})=x_0$, we obtain
\begin{eqnarray}
	x(s) &=&  \frac{s_{0}^{\alpha -1}}{ s^{\alpha -1}} x_0 + \frac{\alpha -1}{s^{\alpha -1}}\int_{s_{0}}^s u^{\alpha -2} y(u)du \label{def:x}\\
	&=&  \frac{s_{0}^{\alpha -1}}{ s^{\alpha -1}} y(s_{0}) + \frac{\alpha -1}{s^{\alpha -1}}\int_{s_{0}}^s u^{\alpha -2} y(u)du - \frac{{s_{0}}^{\alpha}}{(\alpha -1)s^{\alpha -1}}x_1 
\end{eqnarray}
where the last equality follows from the choice of $y(s_{0})$ as given by
\eqref{eq:change var2-bbbb}.  
Then, observe that $x(s)$ can be simply written as follows
\begin{equation}\label{proba-formulation}
	x(s) =   \int_{s_{0}}^s y(u)\,  d\mu_{s} (u) + \xi(s),
\end{equation}
where $\mu_s$ is the positive  Radon  measure on $[s_{0}, s]$ defined by
$$
\mu_s = \frac{s_{0}^{\alpha -1}}{ s^{\alpha -1}} \delta_{s_{0}} +  (\alpha -1) \frac{u^{\alpha -2}}{s^{\alpha -1}} du,
$$
where $\delta_{s_{0}}$ is the Dirac measure at $s_{0}$, and
$$
\xi(s):=- \frac{{s_{0}}^{\alpha}}{(\alpha -1)s^{\alpha -1}} x_1
$$
is considered as a small perturbation for large values of $s$.

\ref{rescale-averaging:int-grad} $\&$ \ref{rescale-averaging:rate-grad}
According to the  energy estimates for the continuous steepest descent system, see Theorem \ref{theorem SD pert},  we have that for any solution trajectory of
\begin{equation}\label{SD_b}
	{\rm (SD)} \quad   \dot{z}(t) + \nabla f (z(t)) =0
\end{equation}
it holds
\begin{equation}\label{SD_value_b}
	{\int_{t_0}^{+\infty} t \left\lVert \dot{z} \left( t \right) \right\rVert ^{2} dt < + \infty, \quad} \int_{t_0}^{+\infty} t \| \nabla f (z(t)  \|^2 dt  < +\infty, {\quad \textrm{ and } \quad \left\lVert \nabla f \left( z \left( t \right) \right) \right\rVert = o \left( \frac{1}{t} \right) \textrm{ as } t \to + \infty} .
\end{equation}
Notice that from the unperturbed (SD) we also have
\begin{equation}\label{SD_value_f}
	\left\lVert \dot{z} \left( t \right) \right\rVert = o \left( \frac{1}{t} \right) \textrm{ as } t \to + \infty .
\end{equation}

After making the change of time variable 
$$t=u (s)= \frac{s^2}{2(\alpha -1)},$$
and setting $z(u (s))= y(s)$, 
the {second integral estimate  above} becomes
\begin{equation}\label{SD_value_c}
	\int_{s_{0}}^{+\infty} u (s) \| \nabla f (z(u (s))  \|^2 \dot{u}(s) ds < +\infty,
\end{equation}
that is 
\begin{equation}\label{SD_value_d}
	\int_{s_{0}}^{+\infty} s^3 \| \nabla f (y (s))  \|^2  ds < +\infty.
\end{equation}
Replacing $y$ by {its equivalent formulation $y(s)= x(s)+   \frac{s}{\alpha -1}\dot{x}(s)$ in \eqref{SD_value_b} and} \eqref{SD_value_d} gives the claims.

\ref{rescale-averaging:rate-vel}
{From \eqref{SD_value_f} we have that  there exists a positive function $\varepsilon_0 (\cdot)$ which satisfies
	$\lim_{s \to +\infty} \varepsilon_0 (s) =0  $ such that}
\begin{equation}\label{small_o_20}
	\| \dot{z} (\tau(s)) \|= \frac{\varepsilon_0 (\tau(s))}{\tau(s)} ,
\end{equation}
and since $ \dot{y}(s) = \dot{\tau} (s)\dot{z} (\tau(s)) $
we get

\begin{equation}\label{small_o_21}
	\| \dot{y} (s) \|= \frac{\varepsilon_0 (\tau (s)) \dot{\tau} (s)}{\tau(s)} .
\end{equation}
From $\tau (s)   = \frac{s^2}{2(\alpha -1)}$, we get

\begin{equation}\label{small_o_22}
	\| \dot{y} (s) \|= \frac{\varepsilon_{1} (s) }{s} ,
\end{equation}
with $\varepsilon_{1} (s) = 2\varepsilon_0 (\tau (s)) $ which tends to zero
as  $s \to +\infty$. Let us now establish a similar estimate for $\| \dot{x} (s) \| $.
According to \eqref{def:x}
\begin{eqnarray}
	x(s) &=&  \frac{s_{0}^{\alpha -1}}{ s^{\alpha -1}} x_0 + \frac{\alpha -1}{s^{\alpha -1}}\int_{s_{0}}^s u^{\alpha -2} y(u)du.
\end{eqnarray}
Let us derivate this expression. We get
\begin{eqnarray}\label{convolution1}
	\dot{x}(s) &=&  -\frac{(\alpha -1)s_{0}^{\alpha -1}}{ s^{\alpha}} x_0 - \frac{(\alpha -1)^2}{s^{\alpha}}\int_{s_{0}}^s u^{\alpha -2} y(u)du  + (\alpha -1) \frac{1}{s} y(s) .
\end{eqnarray}
Let us reformulate this expression in terms of $\dot{y}(s)$, which is the quantity whose speed of convergence is known by  \eqref{small_o_22}.
Indeed, by integration by parts we have
\begin{eqnarray*}
	\int_{s_{0}}^s  u^{\alpha -1} \dot{y}(u)du &=& s^{\alpha -1} y(s)- 
	s_0^{\alpha -1} y(s_0) -(\alpha -1) \int_{s_{0}}^s u^{\alpha -2} y(u)du   .
\end{eqnarray*}
After multiplication of the above expression by $\frac{\alpha -1}{s^{\alpha}} $, and according to $y(s_0)=x_0 {+ \frac{s_0}{\alpha-1} x_{1}}$,  we get
\begin{eqnarray}\label{convolution2}
	\frac{\alpha -1}{s^{\alpha}} \int_{s_{0}}^s  u^{\alpha -1} \dot{y}(u)du &=& (\alpha -1) \frac{1}{s} y(s) -\frac{(\alpha -1)s_{0}^{\alpha -1}}{ s^{\alpha}} x_0
	{- \frac{s_{0}^{\alpha}}{s^{\alpha}} x_{1}}
	-\frac{(\alpha -1)^2}{s^{\alpha}}\int_{s_{0}}^s u^{\alpha -2} y(u)du  .
\end{eqnarray}
Comparing \eqref{convolution1} and \eqref{convolution2} we get
\begin{eqnarray*}
	\dot{x}(s) &=& \frac{\alpha -1}{s^{\alpha}} \int_{s_{0}}^s  u^{\alpha -1} \dot{y}(u)du {+ \frac{s_{0}^{\alpha}}{s^{\alpha}} x_{1}} .
\end{eqnarray*}
According to \eqref{small_o_22}, we obtain successively
\begin{eqnarray}
	\| \dot{x}(s)\|	&\leq &  \frac{\alpha -1}{s^{\alpha }} \int_{s_{0}}^s  u^{\alpha -1} \left\lVert \dot{y}(u) \right\rVert du  {+ \frac{s_{0}^{\alpha}}{s^{\alpha}} \left\lVert x_{1} \right\rVert}  \nonumber \\
	&= & \frac{\alpha -1}{s^{\alpha }} \int_{s_{0}}^s  u^{\alpha -2}
	\varepsilon_{1} (u) du  {+ \frac{s_{0}^{\alpha}}{s^{\alpha}} \left\lVert x_{1} \right\rVert} \label{convolution3} .
\end{eqnarray}
After multiplication of \eqref{convolution3}    by $s$, we obtain
\begin{eqnarray*}
	s\| \dot{x}(s)\|	&\leq &  \frac{\alpha -1}{s^{\alpha -1}} \int_{s_{0}}^s  u^{\alpha -2} 
	\varepsilon_{1} (u) du + \frac{s_{0}^{\alpha}}{s^{\alpha-1}} \left\lVert x_{1} \right\rVert,
\end{eqnarray*}
and Lemma \ref{lem lim-0} gives us $\lim_{s \to +\infty}\| s\dot{x}(s) \|=0$. 

\ref{rescale-averaging:traj}
For trajectory convergence, following the approach of the paper, we do not perform a Lyapunov analysis, but take advantage of the fact that the trajectory $z \left( s \right)$ of the steepest descent dynamics converges weakly towards a solution $x_{*} \in S$ as $s \to +\infty$. This immediately implies that $y(s) = z(u(s))$ converges weakly to $x_{*}$ as $s \to +\infty$. In other words, for each $v \in \cH$	
$$
\left\langle y \left( s \right) , v \right\rangle \to \left\langle x_{*} , v \right\rangle \textrm{ as } s \to + \infty .
$$
To pass from the convergence of $y$ to that of $x$, we use the interpretation of $x$ as an average of $y$ plus a negligeable term. The convergence then results from the general property which says that convergence entails ergodic convergence. Let us make this precise.
Since the perturbation term $\xi$ is negligible as $s\to +\infty$,
we have by definition of $x(s)$
\begin{equation*}
	x(s) \sim  \frac{s_{0}^{\alpha -1}}{ s^{\alpha -1}} y(s_{0})+ \frac{\alpha -1}{s^{\alpha -1}}\int_{s_{0}}^s u^{\alpha -2} y(u)du.
\end{equation*}
After elementary calculus, we just need to prove that for $a(\cdot)$ be a positive real valued function which verifies $\lim_{s \to + \infty} a(s)=0$, then $\lim_{s \to + \infty} A(s)=0$, where
$$
A(s) =   \frac{1}{s^{\alpha -1}}\int_{s_{0}}^s u^{\alpha -2} a(u)du,
$$
{which is true according to Lemma \ref{lem lim-0}.}

\ref{rescale-averaging:rate-fun} First, let us get rid of the perturbation term. 
{From \ref{rescale-averaging:traj}} we deduce that $x(\cdot)$ is also bounded.
Since $\nabla f$ is Lipschitz continuous on the bounded sets, it follows that $\nabla f$ is bounded on the bounded sets, which immediately implies that $f$ is also Lipschitz continuous on the bounded sets. Therefore there exists $L_f >0$ such that
\begin{equation*}
	f(x(s))- f\left(\int_{s_{0}}^s y(u)\,  d\mu_{s} (u)\right) \leq L_f \|\xi(s)\|\leq \frac{{C_f}}{s^{\alpha -1}} \quad \forall s \geq s_0,
\end{equation*}
for $C_{f} := \frac{L_f {s_{0}}^{\alpha}}{\alpha -1} \left\lVert x_1 \right\rVert$, which leads to
\begin{equation}\label{eq:18:51}
	f(x(s)) -\inf_{\cH} f \leq 
	f\left(\int_{s_{0}}^s y(u)\,  d\mu_{s} (u)\right) -\inf_{\cH} f + \frac{C_f}{s^{\alpha -1}} \quad \forall s \geq s_0.  
\end{equation}

We are therefore reduced to examining the convergence rate of  $f\left(\int_{s_{0}}^s y(u)  d\mu_{s} (u)\right) -\inf f$ towards zero as $s \to + \infty$.
We have  that $\mu_s$ is a positive Radon measure on $[s_{0}, s]$  whose total mass is equal to $1$. It is therefore a probability measure, and $\int_{s_{0}}^s y(u)\,  d\mu_{s} (u)$ is obtained by \textbf{averaging} the trajectory $y(\cdot)$ on $[s_{0},s]$ with respect to  $\mu_s$.
From there, we can deduce fast convergence properties for the solution trajectories of \eqref{change var27_thm}.
According to the convexity of $f$, and  Jensen's inequality, we obtain that
\begin{equation}
	\label{eq:Jensen}
	f\left(\int_{s_{0}}^s y(u)\,  d\mu_{s} (u)\right)  -\inf_{\cH} f 
	\leq \int_{s_{0}}^s  \left( f (y(u)) -\inf_{\cH} f \right) d\mu_s (u) .
\end{equation}

It follows from \eqref{SD_rescale_11} that there exists a positive function $\varepsilon (\cdot)$ which satisfies
$\lim_{s \to +\infty} \varepsilon (s) =0  $ such that
\begin{equation}\label{small_o_1}
	f \left( y \left( s \right) \right) -\inf_{\cH} f = \frac{\varepsilon (s)}{s^{2}} .
\end{equation} 
According to the definition of $\mu_s$, it yields for all $s \geq s_0$
\begin{equation*}
	f\left( \int_{s_{0}}^s y(u)\,  d\mu_{s} (u)\right)  -\inf_{\cH} f 
	\leq  \int_{s_{0}}^s  \frac{\varepsilon (u)}{u^{2}} d\mu_s (u)
	= \frac{s_{0}^{\alpha -3}}{s^{\alpha -1}} + \dfrac{\alpha - 1}{s^{\alpha-1}} \int_{s_{0}}^s \varepsilon (u)u^{\alpha-4} du .
\end{equation*}
By making use of \eqref{eq:18:51}, we deduce
\begin{equation*}
	s^{2} \left( f(x(s)) -\inf_{\cH} f \right)
	\leq  \frac{C_{f} + s_{0}^{\alpha -3}}{s^{\alpha-3}} + \dfrac{\alpha - 1}{s^{\alpha-3}} \int_{s_{0}}^s \varepsilon (u)u^{\alpha-4} du .
\end{equation*}

Therefore, for $\alpha >3$ we get
\begin{eqnarray*}
	\limsup_{s \to +\infty}	s^2 (f\left(x(s)\right)  -\inf_{\cH} f) 
	&\leq & \left( \alpha - 1 \right) \limsup_{s \to +\infty}
	\dfrac{1}{s^{\alpha-3}}	 \int_{s_{0}}^s \varepsilon (u) u^{\alpha -4}du.
\end{eqnarray*}
{Finally, we just need to apply Lemma \ref{lem lim-0} to prove the claim.}
\end{proof}

\begin{remark}\label{remark alpha>1}
Let us emphasize the fact that the convergence of the solution trajectories of the damped inertial system \eqref{change var27_thm}
is valid under the condition $\alpha >1$.
This  contrasts with the stronger condition $\alpha >3$ which is required to obtain the convergence of the trajectories for the  system without Hessian driven damping	
\begin{equation}\label{change var27_thm_rem_b}
	\ddot{x}(s) + \frac{\alpha}{s}\dot{x}(s) + \nabla f\left(x(s) \right)  = 0 ,
\end{equation}
and which is the low-resolution ODE of  Nesterov's accelerated gradient method. This property is  related to the presence of the Hessian driven damping and the particular form of the corresponding coefficient  $\frac{s}{\alpha -1}.$ 	
Moreover, it has been established in \cite{ACPR, AP} that the choice $\alpha >3$ provides a convergence rate of the values of $o \left( 1/s^2 \right)$ as $s \rightarrow +\infty$,  and therefore it improves the convergence rate of Nesterov's accelrated gradient method of $\cO \left( 1 /s^2 \right) $ as $s \rightarrow +\infty$.
As seen, the time scale and averaging approach is flexible enough to also provide the convergence rate $o \left( 1 /s^2 \right)$ as $s \rightarrow +\infty$,  in the case $\alpha >3$.
\end{remark}

\begin{remark}
The time scale and averaging technique provides a new approach which allows to transfer the convergence rate of the function value along the trajectory $y(s)$ obtained via the time scaling of (SD) to the function value along the trajectory $x(s)$ that can be seen as the average of the initial one.  This phenomenon can be seen in analogy with the manner the Nesterov method and the Ravine method behave to each other. 

Recall that the Nesterov'acceleration gradient \cite{Nest1,Nest2} reads
\begin{equation}\label{Nesterov:scheme}
	(\forall k \geq 1) \quad \begin{cases}
		y_{k} & := x_{k} + \alpha_{k} \left( x_{k} - x_{k-1} \right)
		\vspace{1mm}\\
		x_{k+1}	& := y_{k} - \lambda \nabla f \left( y_{k} \right)
	\end{cases},
\end{equation}
where $x_{0} = x_{1} \in \cH$, $0 < \lambda \leq \frac{1}{L}$ and
\begin{equation*}
	\alpha_{k} := \dfrac{t_{k}-1}{t_{k+1}} \quad \textrm{ with } \quad \begin{cases}
		t_{1} := 1 \\
		t_{k+1}^{2} - t_{k+1} \leq t_{k}^{2} \quad \forall k \geq 1
	\end{cases} .
\end{equation*}

The so-called Ravine sequence $(y_k)_{k \geq 1}$ is the one generated by the Ravine method introduced by Gelfand and Tsetlin \cite{GT,AF}.  We will show that the values of $f$ along the Ravine sequence converge fast towards $\inf_{\cH}$ and that the convergence rate can be transferred to the values of $f$ along  $(x_k)_{k \geq 0}$,  which is an average sequence of $(y_k)_{k \geq 1}$. Indeed,  by induction arguments,  we have for every $k \geq 1$
\begin{align*}
	x_{k+1} = \dfrac{1}{1 + \alpha_{k+1}} y_{k+1} + \dfrac{\alpha_{k+1}}{1 + \alpha_{k+1}} x_{k} & = \dfrac{1}{1 + \alpha_{k+1}} y_{k+1} + \dfrac{\alpha_{k+1}}{1 + \alpha_{k+1}} \left( \dfrac{1}{1 + \alpha_{k}} y_{k} + \dfrac{\alpha_{k}}{1 + \alpha_{k}} x_{k-1} \right) \nonumber \\
	& = \cdots = \sum_{i=1}^{k+1} \theta_{k+1,i} y_{i} ,
\end{align*}
where the nonnegative weights $\left( \theta_{k+1,i} \right) _{1 \leq i \leq k+1}$ defined by
\begin{equation*}
	\theta_{k+1,i} := \dfrac{1}{1 + \alpha_{k+1}} \prod_{j=1}^{k+1-i} \dfrac{\alpha_{k+2-j}}{1 + \alpha_{k+1-j}} \quad \forall 1 \leq i \leq k \quad \mbox{and} \ \theta_{k+1,k+1}:=  \dfrac{1}{1 + \alpha_{k+1}}
\end{equation*}
fulfill $\sum_{i=1}^{k+1} \theta_{k+1,i} = 1$.

In the following we give a direct proof for the fast convergence rate of the Ravine method, which can be seen as an alternative approach to \cite{AF}.
Let $z_{*} \in S$. We define for every $k \geq 2$ the discrete energy function
\begin{equation*}
	E_{k} := t_{k}^{2} \left( f \left( y_{k-1} \right) - \inf_{\cH} f - \lambda \left( 1 - \dfrac{L \lambda}{2} \right) \left\lVert \nabla f \left( y_{k-1} \right) \right\rVert ^{2} \right) + \dfrac{1}{2 \lambda} \left\lVert \left( t_{k} - 1 \right) \left( x_{k} - x_{k-1} \right) + x_{k} - z_{*} \right\rVert ^{2} \geq 0 .
\end{equation*}
Let $k \geq 2$. By the convexity of $f$, it yields 
\begin{align*}
	f \left( x_{k} \right) \geq f \left( y_{k} \right) + \left\langle \nabla f \left( y_{k} \right) , x_{k} - y_{k} \right\rangle  \quad \mbox{and} \quad \inf_{\cH} f \geq f \left( y_{k} \right) + \left\langle \nabla f \left( y_{k} \right) , z_{*} - y_{k} \right\rangle .
\end{align*}
By multiplying the first inequality by $t_{k+1}^{2} - t_{k+1} > 0$ and the second one by $t_{k+1} > 0$ and summing the resulting inequalities, it yields
\begin{align}
	\left( t_{k+1}^{2} - t_{k+1} \right) \left( f \left( x_{k} \right) - \inf_{\cH} f \right) & \geq t_{k+1}^{2} \left( f \left( y_{k} \right) - \inf_{\cH} f \right) - t_{k+1} \left\langle \nabla f \left( y_{k} \right) , \left( t_{k+1} - 1 \right) \left( y_{k} - x_{k} \right) + y_{k} - z_{*} \right\rangle. \label{Ravine:mul-sum}
\end{align}
Since $t_{k+1} y_{k} = t_{k+1} x_{k} + \left( t_{k} - 1 \right) \left( x_{k} - x_{k-1} \right)$,  by denoting
\begin{align*}
	u_{k} := \left( t_{k+1} - 1 \right) \left( y_{k} - x_{k} \right) + y_{k} - z_{*} = t_{k+1} \left( y_{k} - x_{k} \right) + x_{k} - z_{*} = \left( t_{k} - 1 \right) \left( x_{k} - x_{k-1} \right) + x_{k} - z_{*} ,
\end{align*}
we have
\begin{align*}
	- \lambda t_{k+1} \nabla f \left( y_{k} \right) = t_{k+1} \left( x_{k+1} - y_{k} \right) = t_{k+1} \left( x_{k+1} - x_{k} \right) - \left( t_{k} - 1 \right) \left( x_{k} - x_{k-1} \right) = u_{k+1} - u_{k} .
\end{align*}
Therefore
\begin{align*}
	& \ - t_{k+1} \left\langle \nabla f \left( y_{k} \right) , \left( t_{k+1} - 1 \right) \left( y_{k} - x_{k} \right) + y_{k} - z_{*} \right\rangle \nonumber \\
	=  & \ \dfrac{1}{\lambda} \left\langle u_{k+1} - u_{k} , u_{k} \right\rangle = \dfrac{1}{2 \lambda} \left( \left\lVert u_{k+1} \right\rVert ^{2} - \left\lVert u_{k+1} - u_{k} \right\rVert ^{2} - \left\lVert u_{k} \right\rVert ^{2} \right) \nonumber \\
	= & \ \dfrac{1}{2 \lambda} \left( \left\lVert \left( t_{k+1} - 1 \right) \left( x_{k+1} - x_{k} \right) + x_{k+1} - z_{*} \right\rVert ^{2} - \left\lVert \left( t_{k} - 1 \right) \left( x_{k} - x_{k-1} \right) + x_{k} - z_{*} \right\rVert ^{2} \right) - \dfrac{1}{2} \lambda t_{k+1}^{2} \left\lVert \nabla f \left( y_{k} \right) \right\rVert ^{2} .
\end{align*}
Replacing this expression into \eqref{Ravine:mul-sum},  after some rearrangements we obtain
\begin{align*}
	E_{k+1} \leq \dfrac{1}{2 \lambda} \left\lVert \left( t_{k} - 1 \right) \left( x_{k} - x_{k-1} \right) + x_{k} - z_{*} \right\rVert ^{2} + \left( t_{k+1}^{2} - t_{k+1} \right) \left( f \left( x_{k} \right) - \inf_{\cH} f \right) - \dfrac{1}{2} \lambda \left( 1 - L \lambda \right) t_{k+1}^{2} \left\lVert \nabla f \left( y_{k} \right) \right\rVert ^{2} .
\end{align*}
In addition,  the Descent Lemma gives
\begin{align}
	f \left( x_{k} \right) - \inf_{\cH} f & \leq f \left( y_{k-1} \right) - \inf_{\cH} f + \left\langle \nabla f \left( y_{k-1} \right) , x_{k} - y_{k-1} \right\rangle + \dfrac{L}{2} \left\lVert x_{k} - y_{k-1} \right\rVert ^{2} \nonumber \\
	& = f \left( y_{k-1} \right) - \inf_{\cH} f - \lambda \left( 1 - \dfrac{L \lambda}{2} \right) \left\lVert \nabla f \left( y_{k-1} \right) \right\rVert ^{2} , \label{Ravine:Descent}
\end{align}
which further leads to the inequality
\begin{align*}
	E_{k+1} \leq & \  E_{k} - \dfrac{1}{2} \lambda \left( 1 - L \lambda \right) t_{k+1}^{2} \left\lVert \nabla f \left( y_{k} \right) \right\rVert ^{2} \nonumber \\
	& \ + \left( t_{k+1}^{2} - t_{k+1} - t_{k}^{2} \right) \left( f \left( y_{k-1} \right) - \inf_{\cH} f - \lambda \left( 1 - \dfrac{L \lambda}{2} \right) \left\lVert \nabla f \left( y_{k-1} \right) \right\rVert ^{2} \right) ,
\end{align*}
and it also proves that $E_{k} \geq 0$ for every $k \geq 2$.
Therefore, when $0 < \lambda < \frac{1}{L}$, we can deduce
\begin{equation*}
	f \left( y_{k} \right) - \inf_{\cH} f = \mathcal{O} \left( \dfrac{1}{t_{k+1}^{2}} \right) = \mathcal{O} \left( \dfrac{1}{(k+1)^{2}} \right) ,
\end{equation*}
which immediately transfers to
\begin{equation*}
	f \left( x_{k} \right) - \inf_{\cH} f = \mathcal{O} \left( \dfrac{1}{t_{k+1}^{2}} \right) = \mathcal{O} \left( \dfrac{1}{(k+1)^{2}} \right) .
\end{equation*}
\end{remark}

\section{Applications to other continuous dynamics} {In this section we will show that the proposed approach which combines time scaling with averaging can be successfully applied beyond the classical steepest descent dynamical system.}

\subsection{Explicit Hessian damping}\label{sec:explicit}

We consider an explicit Hessian driven damping version of the Su-Boyd-Cand\'es dynamical system \eqref{SBC}. As explained in \eqref{change var277} the following dynamic
\begin{equation}\label{eq:dyn_inert_implicit}
\ddot{y} \left( s \right) + \dfrac{\alpha}{s} \dot{y} \left( s \right) + \dfrac{s}{\alpha +1} \dfrac{d}{ds} \left( \nabla f \left( y \left( s \right) \right) \right) + \nabla f \left( y \left( s \right) \right) = 0 
\end{equation}
comes naturally by performing a Taylor expansion in \eqref{change var27_thm}. 
Just note that to match the general writing of inertial dynamics for optimization, the above formula is obtained by taking $\alpha +1$ instead of $\alpha -1$.


The above dynamic \eqref{eq:dyn_inert_implicit} is a particular case of the general dynamic on $[s_0, +\infty[$
$${\rm\mbox{(DIN-AVD)}}_{\alpha, \beta, b} \quad  \ddot{y}(s) + \displaystyle{\frac{\alpha}{s} }\dot{y}(s) +   \beta(s) \nabla^2  f (y(s)) \dot{y} (s) + b(s) \nabla  f (y(s)) = 0,
$$
studied by Attouch, Chbani, Fadili, Riahi in \cite{ACFR,ACFR-Opti}. 
Let us recall the convergence results obtained in \cite{ACFR,ACFR-Opti} by performing a Lyapunov analysis for ${\rm (DIN-AVD)}_{\alpha, \beta, b} $.
They will  serve as a comparison with our results.
\begin{theorem} \label{ACFR,rescale} Assume that $\alpha \geq 1$. \; Set 
$w(s)\eqdef b(s)-\left( \dot{\beta}(s) +\dfrac{\beta(s)}{s}\right)$
and suppose that the following conditions are satisfied for every $s \geq s_0$
\begin{eqnarray*}
	&& (\mathcal{G}_{2}) \quad b(s) > \dot{\beta}(s) +\dfrac{\beta(s)}{s}; 
	\\
	&& (\mathcal{G}_{3}) \quad  s\dot{w}(s)\leq (\alpha-3)w(s).\hspace{3cm}
\end{eqnarray*}

\noindent Then, for every solution trajectory $y: [s_{0}, +\infty[ \rightarrow \cH$ of  ${\rm (DIN-AVD)}_{\alpha, \beta, b} $, the following statements are true:
\begin{eqnarray*}
	&& i) \, \,
	f(y(s))-\inf_{\cH} f = \cO 
	\left(\dfrac{1}{s^2 w(s)}\right) \, \mbox{ as } \, s \to + \infty;
	\\
	&& ii) \, \int_{s_{0}}^{+\infty} s^2 \beta(s) w(s)\|\nabla f(y(s))\|^{2} ds<+\infty;
	\\
	&& iii) \,  \int_{s_{0}}^{+\infty} s \Big(  (\alpha-3)w(s) - s\dot{w}(s)\Big)(f(y(s))-\inf_{\cH} f) ds <+\infty .
\end{eqnarray*}
\end{theorem}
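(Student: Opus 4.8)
Since Theorem~\ref{ACFR,rescale} is the Lyapunov counterpart of the time-scaling/averaging statements above and is quoted from \cite{ACFR,ACFR-Opti}, the route I would take is a direct energy analysis of (DIN-AVD)$_{\alpha,\beta,b}$, in three stages. First, I would absorb the Hessian term into a total time derivative: along a solution trajectory $\beta(s)\nabla^2 f(y(s))\dot y(s)=\frac{d}{ds}\big(\beta(s)\nabla f(y(s))\big)-\dot\beta(s)\nabla f(y(s))$, so introducing the combined velocity $v(s):=\dot y(s)+\beta(s)\nabla f(y(s))$ turns the dynamic into a first-order-in-gradient equation $\dot v(s)+\frac{\alpha}{s}\dot y(s)+\big(b(s)-\dot\beta(s)\big)\nabla f(y(s))=0$; substituting $\dot y=v-\beta\nabla f$ makes the coefficient $w(s)=b(s)-\dot\beta(s)-\beta(s)/s$ surface naturally. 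This reformulation is what makes a clean Lyapunov function available while using only the chain rule along the trajectory, hence licit under $(\mathcal A)$ (solutions understood with $s\mapsto\nabla f(y(s))$ absolutely continuous).

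Second, I would fix $z_*\in S$, set $f^*=\inf_{\cH}f$, and test the dynamic against an energy of the form
\[
\mathcal E(s):=s^2 w(s)\big(f(y(s))-f^*\big)+\tfrac{1}{2}\big\|(\alpha-1)(y(s)-z_*)+s\,v(s)\big\|^2+\delta(s)\|\nabla f(y(s))\|^2 ,
\]
where $\delta(\cdot)\ge 0$ is a correction tuned so that the $\|\nabla f\|^2$-terms produced on differentiation acquire a sign. Condition $(\mathcal G_2)$ gives $w(s)>0$, hence $\mathcal E\ge 0$; the aim is to prove, after substituting the reformulated dynamic and regrouping, a differential inequality
\[
\dot{\mathcal E}(s)\le -\,s\big((\alpha-3)w(s)-s\dot w(s)\big)\big(f(y(s))-f^*\big)-c\,s^2\beta(s)w(s)\|\nabla f(y(s))\|^2
\]
for some $c>0$, in which $(\mathcal G_3)$ is exactly what makes the first coefficient $(\alpha-3)w-s\dot w$ nonnegative, and $\alpha\ge 1$ together with $\beta\ge 0$ (part of the framework of \cite{ACFR}) is what lets the remaining cross terms be dominated.

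Third, I would conclude. The inequality shows $\mathcal E$ nonincreasing, so $\mathcal E(s)\le\mathcal E(s_0)$ for all $s\ge s_0$; dropping the nonnegative square and the $\delta\|\nabla f\|^2$-term leaves $s^2 w(s)(f(y(s))-f^*)\le\mathcal E(s_0)$, which is~(i). Integrating the differential inequality over $[s_0,+\infty[$ and using $\mathcal E\ge 0$ bounds $\int_{s_0}^{+\infty} s\big((\alpha-3)w-s\dot w\big)(f(y)-f^*)\,ds$ and $\int_{s_0}^{+\infty} s^2\beta w\|\nabla f(y)\|^2\,ds$ by $\mathcal E(s_0)<+\infty$, which are (iii) and (ii). The main obstacle is the second stage: pinning down the precise coefficients in $\mathcal E$ — in particular the correction $\delta(s)$ — and executing the differentiation so that every leftover term is either manifestly $\le 0$ or absorbed via $(\mathcal G_2)$--$(\mathcal G_3)$ and $\alpha\ge 1$; this computational core of the Lyapunov method has no general recipe, which is exactly the difficulty our time-scale and averaging approach bypasses.
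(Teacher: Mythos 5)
The paper itself does not prove Theorem~\ref{ACFR,rescale}: it is recalled from \cite{ACFR,ACFR-Opti} precisely as the Lyapunov-based benchmark against which the time-scaling/averaging results are compared, so there is no internal proof to match your proposal against. What can be checked is whether your sketch reconstructs the argument of the cited references, and in essence it does; moreover it closes more cleanly than you anticipate. With $v(s)=\dot y(s)+\beta(s)\nabla f(y(s))$ and the anchor variable $u(s):=(\alpha-1)(y(s)-z_*)+s\,v(s)$, the constitutive equation gives the exact identity $\dot u(s)=-s\,w(s)\nabla f(y(s))$ (this is the whole point of the reformulation you describe in your first stage), and then for
\[
\mathcal E(s)=s^2w(s)\big(f(y(s))-\inf\nolimits_{\cH} f\big)+\tfrac12\left\lVert u(s)\right\rVert^2
\]
one finds that the cross terms $\pm\, s^2w(s)\langle\nabla f(y(s)),\dot y(s)\rangle$ cancel identically, convexity together with $\alpha\ge1$ and $w>0$ (from $(\mathcal G_2)$) handles the anchor product, and one lands exactly on
\[
\dot{\mathcal E}(s)\le -\,s\big((\alpha-3)w(s)-s\dot w(s)\big)\big(f(y(s))-\inf\nolimits_{\cH} f\big)-s^2\beta(s)w(s)\left\lVert\nabla f(y(s))\right\rVert^2 .
\]
Your stage three then delivers (i)--(iii) as you describe. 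Two corrections to your stage two: the correction term $\delta(s)\|\nabla f(y(s))\|^2$ is unnecessary ($\delta\equiv0$ works, so the ``main obstacle'' you flag is not one --- the only guess required is $u$, which you made correctly), and the constant $c$ in your target inequality is exactly $1$. One caveat worth stating explicitly: the nonnegativity of $\beta$, needed both for the sign of $-s^2\beta w\|\nabla f\|^2$ and for item (ii) to carry information, is a standing assumption of \cite{ACFR} that your argument uses but the statement as transcribed here leaves implicit.
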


Let us specialize the above result to \eqref{change var27_thm} by taking $b(s) = 1$ and $\beta (s)=\frac{s}{\alpha -1} $ for every $s \geq s_0$. We get $w(\cdot) \equiv \frac{\alpha -3}{\alpha -1}$.
Thus $(\mathcal{G}_{2})$ is satisfied for 
$\alpha >3$, {while}  $(\mathcal{G}_{3})$   reduces to 
$0 \leq \frac{{(\alpha -3)}^2}{\alpha -1}$, which is also satisfied.
We thus get the convergence rates for the trajectories of \eqref{change var27_thm}
$$
f(y(s))-\min_{\cH} f = \cO 
\left(\dfrac{1}{s^2}\right) \, \mbox{ as } \, s \to + \infty;
\, \int_{s_{0}}^{+\infty} s^3\|\nabla f(y(s))\|^{2} ds<+\infty; 
\,  \int_{s_{0}}^{+\infty} s (f(y(s))-\inf_{\cH} f) ds <+\infty .
$$
In this particular setting we can deduce further (see \cite[Theorem 2.1]{ACFR-Opti})
$$
\int_{s_{0}}^{+\infty} s\|\dot{y}(s)\|^{2} ds<+\infty; \quad 
f(y(s))-\min_{\cH} f = o 
\left(\dfrac{1}{s^2}\right) \, \mbox{ and }
\|\dot{y}(s)\| = o
\left(\dfrac{1}{s}\right) \, \mbox{ as } \, s \to + \infty.
$$

Let us show that this dynamic can be reached by time scaling of a perturbed version of the continuous steepest descent dynamical system. As for the implicit case, we make in the latter the change of time variable 
$$\tau (s)= \frac{s^2}{2(\alpha +1)}.$$
Thus we obtain the rescaled steepest descent  
\begin{equation}\label{change var2-bbbb}
\dot{y} (s)      +   \frac{s}{\alpha +1} \nabla f(y(s)) =0,
\end{equation}
with the convergence rate
\begin{equation}\label{SD_rescale_111}
f (y(s))- \inf_{\cH} f {= o \left(\dfrac{1}{s^2}\right) \mbox{ as } \, s \to + \infty.}
\end{equation}
To obtain an explicit Hessian driven damping term, we take the derivative with respect to $s$ in \eqref{change var2-bbbb}
$$
\ddot{y} \left( s \right) + \dfrac{1}{\alpha + 1} \nabla f \left( y \left( s \right) \right) + \dfrac{s}{\alpha + 1} \dfrac{d}{ds} \left( \nabla f \left( y \left( s \right) \right) \right) = 0 .
$$
On the other hand, for every $s \geq \s_{0}$, by multiplying both sides of \eqref{change var2-bbbb} by $\frac{\alpha}{s} > 0$, we get 
$$
\dfrac{\alpha}{s} \dot{y} \left( s \right) + \dfrac{\alpha}{\alpha + 1} \nabla f \left( y \left( s \right) \right) = 0 .
$$
Summing up the two above equations above yields
$$
\ddot{y} \left( s \right) + \dfrac{\alpha}{s} \dot{y} \left( s \right) + \dfrac{s}{\alpha +1} \dfrac{d}{ds} \left( \nabla f \left( y \left( s \right) \right) \right) + \nabla f \left( y \left( s \right) \right) = 0 ,
$$
which is precisely \eqref{eq:dyn_inert_implicit}.

\begin{theorem}\label{Thm-rescale-averaging}
{Suppose that  $f \colon \cH \to \R$ satisfies $(\mathcal A)$.}	
Let $y: [s_{0}, +\infty[ \to \cH$ be a solution trajectory of
\begin{equation}\label{Hessian_explicit}
	\ddot{y} \left( s \right) + \dfrac{\alpha}{s} \dot{y} \left( s \right) + \dfrac{s}{\alpha + 1} \dfrac{d}{ds} \left( \nabla f \left( y \left( s \right) \right) \right) + \nabla f \left( y \left( s \right) \right) = 0 .
\end{equation}
\noindent
Assume that $\alpha > 1$. Then the following statements are true:
\begin{enumerate}
	\item \label{rescale-pert:int-vel} 
	(integral estimate of the velocities)
	$
	\int_{s_0}^{+\infty} s \left\lVert \dot{y} \left( s \right) \right\rVert ^{2} ds < + \infty;
	$
	
	\item \label{rescale-pert:int-fun} 
	(integral estimate of the values)
	$
	\int_{s_0}^{+\infty} s \left( f \left( y \left( s \right) \right) -\inf_{\cH} f \right) ds < +\infty;
	$
	\item \label{rescale-pert:int-grad} 
	(integral estimate of the gradients)
	$
	\int_{s_0}^{+\infty} s^{3} \left\lVert \nabla f \left( y \left( s \right) \right) \right\rVert ^{2}  ds < +\infty;
	$	
	
	\item \label{rescale-pert:rate-fun} 
	(convergence of values towards minimal value)
	$
	f \left( y \left( s \right) \right) -\inf_{\cH} f = o \left( \frac{1}{s^{2}} \right) \textrm{ as } s \to + \infty;
	$
	
	\item \label{rescale-pert:traj}
	the solution trajectory $y(s)$ converges weakly as $s \to +\infty$, and its limit belongs to $S=\argmin f$.\vspace{1ex}
	
	If $\alpha > 2$, then
	\item \label{rescale-pert:rate-vel}
	(convergence of gradients towards zero)	\quad 
	$
	\left\lVert \dot{y} \left( s \right) \right\rVert = o \left( \frac{1}{s} \right) \textrm{ as } s \to + \infty;
	$
	
	\item \label{rescale-pert:rate-grad}
	(convergence of gradients towards zero)	\quad 
	$
	\left\lVert \nabla f \left( y \left( s \right) \right) \right\rVert = o \left( \frac{1}{s^{2}} \right) \textrm{ as } s \to + \infty.
	$
\end{enumerate}
\end{theorem}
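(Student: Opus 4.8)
The plan is to reduce \eqref{Hessian_explicit} to a perturbed steepest descent system and then invoke Theorem \ref{theorem SD pert}, exactly in the spirit of the proof of Theorem \ref{Thm-rescale-averaging-implicit}. The first (and only delicate) step is to undo the ``explicitation'' of the Hessian damping. Given a solution $y$ of \eqref{Hessian_explicit}, I would introduce the residual of the rescaled steepest descent \eqref{change var2-bbbb},
\[
w(s):=\dot y(s)+\tfrac{s}{\alpha+1}\,\nabla f(y(s)),
\]
and observe that, since $\dot w(s)=\ddot y(s)+\tfrac{1}{\alpha+1}\nabla f(y(s))+\tfrac{s}{\alpha+1}\tfrac{d}{ds}\bigl(\nabla f(y(s))\bigr)$ and $\tfrac{\alpha}{s}w(s)=\tfrac{\alpha}{s}\dot y(s)+\tfrac{\alpha}{\alpha+1}\nabla f(y(s))$, equation \eqref{Hessian_explicit} is nothing but $\dot w(s)+\tfrac{\alpha}{s}w(s)=0$. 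Multiplying by $s^{\alpha}$ gives $\tfrac{d}{ds}\bigl(s^{\alpha}w(s)\bigr)=0$, hence $w(s)=(s_0/s)^{\alpha}w(s_0)$; in particular $\|w(s)\|=\cO(s^{-\alpha})$ and $y$ solves the perturbed rescaled steepest descent $\dot y(s)+\tfrac{s}{\alpha+1}\nabla f(y(s))=(s_0^{\alpha}/s^{\alpha})\,w(s_0)$.

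Next I would reverse the time scaling $\tau(s)=s^2/(2(\alpha+1))$, which is a $\cC^1$-diffeomorphism of $[s_0,+\infty[$ onto $[t_0,+\infty[$ (because $\dot\tau(s)=s/(\alpha+1)>0$) with $t_0=\tau(s_0)$. Setting $z:=y\circ\tau^{-1}$ and applying the chain rule as in Subsection \ref{sec:time_scale}, $z$ is a classical solution of the perturbed steepest descent $\dot z(t)+\nabla f(z(t))=g(t)$ with $g(t)=\bigl(\tfrac{\alpha+1}{s}w(s)\bigr)\big|_{s=\tau^{-1}(t)}$; since $\tau^{-1}(t)=\sqrt{2(\alpha+1)t}$, this yields $\|g(t)\|=\cO\bigl(t^{-(\alpha+1)/2}\bigr)$. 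A one-line integration then shows that the two conditions in \eqref{pert 01} hold whenever $\alpha>1$, and that the reinforced condition $\int_{t_0}^{+\infty}t^2\|g(t)\|^2\,dt<+\infty$ holds whenever $\alpha>2$; so Theorem \ref{theorem SD pert} applies to $z$.

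It then remains to transport each conclusion back to $y$ through $y(s)=z(\tau(s))$, $\dot y(s)=\dot\tau(s)\dot z(\tau(s))$, $\nabla f(y(s))=\nabla f(z(\tau(s)))$, and the substitution $t=\tau(s)$, $dt=\dot\tau(s)\,ds$. The estimate $\int t\|\dot z\|^2\,dt<+\infty$ becomes item \ref{rescale-pert:int-vel}; $\int t\|\nabla f(z)\|^2\,dt<+\infty$ becomes item \ref{rescale-pert:int-grad} (the factor $s^{3}$ coming from $\tau(s)\dot\tau(s)$); $\int(f(z)-\inf_{\cH}f)\,dt<+\infty$ becomes item \ref{rescale-pert:int-fun}; $f(z(t))-\inf_{\cH}f=o(1/t)$ becomes $f(y(s))-\inf_{\cH}f=o(1/\tau(s))=o(1/s^2)$, i.e. item \ref{rescale-pert:rate-fun}; the weak convergence of $z(t)$ to some $x_*\in S$ forces $y(s)=z(\tau(s))\to x_*$ weakly since $\tau(s)\to+\infty$, i.e. item \ref{rescale-pert:traj}; and, when $\alpha>2$, $\|\nabla f(z(t))\|=o(1/t)$ gives item \ref{rescale-pert:rate-grad}. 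For the remaining item \ref{rescale-pert:rate-vel} I would not use $z$ directly, but rather write $\dot y(s)=w(s)-\tfrac{s}{\alpha+1}\nabla f(y(s))$ and combine $\|w(s)\|=\cO(s^{-\alpha})=o(1/s)$ (valid since $\alpha>2>1$) with $\tfrac{s}{\alpha+1}\|\nabla f(y(s))\|=s\cdot o(1/s^2)=o(1/s)$ from item \ref{rescale-pert:rate-grad}.

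The main obstacle is the first step: recognizing that \eqref{Hessian_explicit} collapses to the scalar-coefficient linear equation $\dot w+\tfrac{\alpha}{s}w=0$ for the residual $w$, which simultaneously identifies the external perturbation and delivers its $\cO(s^{-\alpha})$ decay. Once this is in place the argument is the same time-scaling and perturbation bookkeeping already carried out for the implicit dynamics, and the thresholds $\alpha>1$ and $\alpha>2$ are dictated precisely by \eqref{pert 01} and its reinforcement in Theorem \ref{theorem SD pert}.
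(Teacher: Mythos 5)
Your proposal is correct and follows essentially the same route as the paper: integrating the equation against $s^{\alpha}$ to identify the residual $\dot y(s)+\tfrac{s}{\alpha+1}\nabla f(y(s))=c_0 s^{-\alpha}$, undoing the time scaling $\tau(s)=s^2/(2(\alpha+1))$ to land on the perturbed steepest descent with $g(t)=\cO(t^{-(\alpha+1)/2})$, and then invoking Theorem \ref{theorem SD pert} with the thresholds $\alpha>1$ and $\alpha>2$ exactly as in the paper. The only cosmetic difference is that you establish item \ref{rescale-pert:rate-vel} directly in the $s$-variable from $\dot y(s)=w(s)-\tfrac{s}{\alpha+1}\nabla f(y(s))$, whereas the paper bounds $\|\dot z(t)\|\leq\|\nabla f(z(t))\|+\|g(t)\|=o(1/t)$ and then rescales; the two computations are equivalent.
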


\begin{proof}
\noindent
The proof consists in reversing the process described above which has permitted us to pass from (SD) to the damped inertial dynamic with explicit Hessian driven damping \eqref{Hessian_explicit}. Let $y$ be an arbitrary solution trajectory of \eqref{Hessian_explicit} which satisfies the Cauchy data $y \left(s_{0} \right) := y_{0}$ and $\dot{y} \left(s_{0} \right) := y_{1}$. 
Observe that 
\begin{align*}
	\dfrac{d}{ds} \left( s^{\alpha} \dot{y} \left( s \right) \right) & = s^{\alpha} \ddot{y} \left( s \right) + \alpha  s^{\alpha-1} \dot{y} \left( s \right) \\
	\dfrac{d}{ds} \left( \dfrac{s^{\alpha+1}}{\alpha+1} \nabla f \left( y \left( s \right) \right) \right) & = \dfrac{s^{\alpha+1}}{\alpha+1} \dfrac{d}{ds} \nabla f \left( y \left( s \right) \right) + s^{\alpha} \nabla f \left( y \left( s \right) \right) .
\end{align*}
Hence, by multiplying both sides of \eqref{Hessian_explicit} by $s^{\alpha}>0$, we deduce that
\begin{equation*}
	\dfrac{d}{ds} \left( s^{\alpha} \dot{y} \left( s \right) + \dfrac{s^{\alpha+1}}{\alpha+1} \nabla f \left( y \left( s \right) \right) \right) = 0 .
\end{equation*}
This gives, equivalently
\begin{equation}\label{Hessian 02}
	\dot{y} \left( s \right) + \dfrac{s}{\alpha+1} \nabla f \left( y \left( s \right) \right) = \dfrac{c_0}{s^{\alpha}} ,
\end{equation}
where $c_{0} \in \cH$ is a constant that can be determined from the Cauchy condition. Precisely, taking $s := s_{0}$ in \eqref{Hessian 02} it yields 
\begin{equation}\label{Hessian init}
	c_{0} = s_{0}^{\alpha} y_{1} + \frac{s_{0}^{\alpha+1}}{\alpha+1} \nabla f \left( y_{0} \right).
\end{equation}
Next we show that time scaling makes it possible to link the solution trajectory $y(\cdot)$ of \eqref{Hessian_explicit} to the solution trajectory of a perturbation of (SD). 
Set $z \left( t \right) = y \left( \sigma \left( t \right) \right)$, where
\begin{equation*}
	\sigma \left( t \right) = \sqrt{2 \left( \alpha+1 \right) t} .
\end{equation*}
Therefore,
\begin{equation*}
	\dot{\sigma} \left( t \right) = \sqrt{\dfrac{\alpha+1}{2t}} \quad \textrm{ and } \quad \dot{z} \left( t \right) = \dot{\sigma} \left( t \right) \dot{y} \left( \sigma \left( t \right) \right) .
\end{equation*}
Moreover, notice that \eqref{Hessian 02} can be equivalently written as
\begin{equation*}
	\dfrac{\alpha+1}{s} \dot{y} \left( s \right) + \nabla f \left( y \left( s \right) \right) = \dfrac{c_0 (\alpha+1)}{s^{\alpha+1}}.
\end{equation*}
Setting $s = \sigma \left( t \right)$ gives with $c:= c_0 (\alpha+1) \in \cH$
\begin{equation*}
	\frac{\alpha+1}{\sigma \left( t \right)} \frac{1}{\dot{\sigma} \left( t \right)} \dot{z} \left( t \right) + \nabla f \left( z \left( t \right) \right) = \frac{c}{\sigma \left( t \right)^{\alpha+1}}
\end{equation*}
or, equivalently,
\begin{equation*}
	\dot{z} \left( t \right) + \nabla f \left( z \left( t \right) \right) = \frac{c}{t^{\frac{\alpha+1}{2}}}.
\end{equation*}
This is nothing else than the perturbed continuous steepest descent system with perturbation function
$g \colon \left[ t_{0} , + \infty \right[ \to \cH, g(t) = \frac{c}{t^{\frac{\alpha+1}{2}}},$ which obviously fulfills
\begin{equation*}
	\int_{t_{0}}^{+ \infty} \left\lVert g \left( t \right) \right\rVert dt < + \infty \textrm{ and } \int_{t_{0}}^{+ \infty} t \left\lVert g \left( t \right) \right\rVert ^{2} dt < + \infty ,
\end{equation*}
for every $\alpha > 1$. 

Furthermore,
\begin{equation*}
	\int_{t_{0}}^{+ \infty} t^{2} \left\lVert g \left( t \right) \right\rVert ^{2} dt = \int_{t_{0}}^{+ \infty} \dfrac{\left\lVert c \right\rVert ^{2}}{t^{\alpha-1}} dt < + \infty ,
\end{equation*}
whenever $\alpha > 2$.
All statements excepting \ref{rescale-pert:rate-vel} follow from Theorem \ref{theorem SD pert}.

Going back to the perturbed continuous steepest descent \eqref{pert SD}, we see that
\begin{equation*}
	\left\lVert \dot{z} \left( t \right) \right\rVert \leq \left\lVert \nabla f \left( z \left( t \right) \right) \right\rVert + \left\lVert g \left( t \right) \right\rVert = o \left( \dfrac{1}{t} \right) + o \left( \dfrac{1}{t^{\frac{\alpha+1}{2}}} \right) = o \left( \dfrac{1}{t} \right) \textrm{ as } t \to + \infty .
\end{equation*}
Taking $t = \tau \left( s \right) = \frac{s^{2}}{2(\alpha+1)}$, it yields
\begin{equation*}
	\left\lVert \dot{z} \left( \tau \left( s \right) \right) \right\rVert = o \left( \dfrac{1}{s^{2}} \right) \textrm{ as } s \to + \infty,
\end{equation*}
which gives \ref{rescale-pert:rate-vel}, since $ \dot{y}(s) = \dot{\tau} (s)\dot{z} (\tau(s)) = \frac{s}{\alpha+1} \dot{z} (\tau(s))$.
\end{proof}

\begin{remark}
As in in Theorem \ref{Thm-rescale-averaging-implicit}, we see that the convergence of trajectory can be guaranteed for $\alpha >1$,  which is less restrictive than for the Su-Boyd-Cand\'es system,  the trajectory of which is known to convergence for $\alpha >3$. This is another positive effect of the approach that  combines time scaling and averaging.
\end{remark}

\subsection{Combining implicit and explicit Hessian driven damping}
As a starting dynamic we consider {the regularized Newton dynamical system}
\begin{equation}\label{eq:Dyn_Newton_1}
\begin{cases}
	\lambda \dot{z}(t) +  \dot{v}(t) + v(t) = 0
	\vspace{1mm}\\
	v(t)=\nabla f( z(t))
\end{cases}.
\end{equation}
It is a special case of the regularized Newton dynamic 
\begin{equation}\label{Hessian_AS}
\lambda (t) \dot{z} (t) + 
\nabla^2  f (z( t)) \dot{z} ( t)+\nabla f(z(t)) =0.
\end{equation}
This system has been studied by Attouch and Svaiter \cite{ASv}, Attouch, Redont, and Svaiter \cite{ARS}, Attouch, Marques Alves, and Svaiter \cite{AMAS} to solve monotone inclusions (that is, with a general maximally monotone operator $A$ instead of $\nabla f$).
In this approach, the central question is the adjustment of the Levenberg-Marquardt regularization parameter $\lambda(\cdot)$ in front of the velocity term. Indeed, taking $\lim_{t\to +\infty} \lambda(t) =0$ allows to be asymptotically close to the Newton method. Our situation concerns the simpler case $\lambda (t) \equiv \lambda >0$ constant, which fits with the convergence properties proved in these papers, and the fact that this dynamic is well-posed. For precise statements concerning the asymptotic behavior of this system see \cite[Theorem 4.1, Remark 4.2]{ASv} and \cite[Theorem 3.9]{ASv}. For the sake of completeness, we provide below its convergence properties, {that shows some improvements compared to the results in \cite{ASv}.  The proof of Theorem \ref{Theorem regNewton} is provided in the Appendix.}
\begin{theorem}\label{Theorem regNewton}
{Suppose that  $f \colon \cH \to \R$ satisfies $(\mathcal A)$.}	
Let $z: [t_0, +\infty[ \to \cH$ be a solution trajectory of the dynamical system \eqref{eq:Dyn_Newton_1}.
Then the following statements are true:
\begin{enumerate}
	\item \label{regNewton:int-fun} 
	(integral estimate of the values)
	$
	\int_{t_0}^{+\infty} \left( f \left( z \left( t \right) \right) -\inf_{\cH} f \right) dt < +\infty;
	$
	
	\item \label{regNewton:rate-fun} 
	(convergence of values towards minimal value)
	$
	f \left( z \left( t \right) \right) -\inf_{\cH} f = o \left( \frac{1}{t} \right) \textrm{ as } t \to + \infty;
	$
	
	\item \label{regNewton:int-vel-grad} 
	(integral estimates of velocities and gradients)
	$\int_{t_{0}}^{+ \infty} t \left\lVert \dot{z} \left( t \right) \right\rVert ^{2} dt < +\infty$
	and
	$\int_{t_{0}}^{+ \infty} t \left\lVert \dot{v} \left( t \right) \right\rVert ^{2} dt < +\infty,$	
	which lead to
	$
	\int_{t_{0}}^{+ \infty} t \left\lVert v \left( t \right) \right\rVert ^{2} dt = \int_{t_{0}}^{+ \infty} t \left\lVert \nabla f \left( z \left( t \right) \right) \right\rVert ^{2} dt < +\infty;
	$		
	
	\item \label{regNewton:rate-grad-vel} 
	(convergence of velocities and gradients towards zero)	\quad 
	$
	\left\lVert v \left( t \right) \right\rVert = \left\lVert \nabla f \left( z \left( t \right) \right) \right\rVert = o \left( \frac{1}{t} \right) \textrm{ as } t \to + \infty,
	$
	which lead to
	$
	\left\lVert \dot{z} \left( t \right) \right\rVert = \left\lVert \dot{v} \left( t \right) \right\rVert = o \left( \frac{1}{t} \right) \textrm{ as } t \to + \infty;
	$
	
	\item 
	the solution trajectory $z(t)$ converges weakly as $t \to +\infty$, and its limit belongs to $S=\argmin f$.
\end{enumerate}
\end{theorem}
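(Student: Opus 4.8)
The plan is to reduce the regularized Newton system \eqref{eq:Dyn_Newton_1} to the \emph{unperturbed} steepest descent for the Moreau envelope of $f$, and then to pull the conclusions of Theorem \ref{theorem SD pert} back through explicit algebraic identities, so that no new Lyapunov analysis is needed. First I would set $w(t) := z(t) + \frac{1}{\lambda} v(t) = z(t) + \frac{1}{\lambda}\nabla f(z(t))$. Since $v = \nabla f(z)$, this means $z = \left( I + \frac{1}{\lambda}\nabla f \right)^{-1}(w) = \prox_{f/\lambda}(w)$, hence $v = \lambda(w-z) = \nabla f_{1/\lambda}(w)$, where $f_{1/\lambda}$ denotes the Moreau envelope of $f$ of parameter $1/\lambda$. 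Differentiating and using the first line of \eqref{eq:Dyn_Newton_1} in the form $\dot v = -\lambda\dot z - v$ gives $\dot w = \dot z + \frac{1}{\lambda}\dot v = -\frac{1}{\lambda} v$, i.e.
\[
\dot w(t) + \nabla h\left( w(t) \right) = 0, \qquad h := \tfrac{1}{\lambda} f_{1/\lambda} .
\]
As $f_{1/\lambda}$ is convex, of class $\mathcal C^{1,1}$, with $\inf_{\cH} f_{1/\lambda} = \inf_{\cH} f$ and $\argmin_{\cH} f_{1/\lambda} = S \neq \emptyset$, the function $h$ satisfies $(\mathcal A)$ and $w$ solves the steepest descent equation for $h$; Theorem \ref{theorem SD pert} therefore applies with $f$ replaced by $h$ and $g \equiv 0$ (which trivially fulfils every summability condition there, including $\int_{t_0}^{+\infty} t^2\lVert g(t)\rVert^2 dt < +\infty$).

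Next I would transfer each conclusion using $\nabla h(w) = \frac{1}{\lambda} v$, $\dot w = -\frac{1}{\lambda} v$, and the identity $f_{1/\lambda}(w) = f(z) + \frac{\lambda}{2}\lVert w-z\rVert^2$ with $w-z = \frac{1}{\lambda} v$, which give
\[
h(w) - \inf_{\cH} h = \tfrac{1}{\lambda}\left( f(z) - \inf_{\cH} f \right) + \tfrac{1}{2\lambda^2}\lVert v\rVert^2 \geq 0 .
\]
Hence $0 \leq f(z(t)) - \inf_{\cH} f \leq \lambda\big( h(w(t)) - \inf_{\cH} h \big)$, and items \ref{SD pert:int-fun}--\ref{SD pert:rate-fun} of Theorem \ref{theorem SD pert} yield items \ref{regNewton:int-fun} and \ref{regNewton:rate-fun}; items \ref{SD pert:int-grad} and \ref{SD pert:rate-grad} give $\int_{t_0}^{+\infty} t\lVert \nabla f(z(t))\rVert^2 dt < +\infty$ and $\lVert \nabla f(z(t))\rVert = \lVert v(t)\rVert = o(1/t)$; and since $\lVert v(t)\rVert = o(1/t)\to 0$, the weak convergence of $w(t) = z(t) + \frac{1}{\lambda} v(t)$ to a point of $S$ (item \ref{SD pert:traj}) transfers to $z(t)$.

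It remains to estimate $\dot z$ and $\dot v$, which are not carried by $w$ alone; here I would use monotonicity of $\nabla f$. Since $v = \nabla f(z)$, dividing $\langle v(t+\eta)-v(t),\, z(t+\eta)-z(t)\rangle \geq 0$ by $\eta^2$ and letting $\eta \to 0$ gives $\langle \dot v(t), \dot z(t)\rangle \geq 0$. Taking the inner product of $\lambda\dot z + \dot v + v = 0$ with $\dot v$ then yields $\lVert \dot v\rVert^2 \leq -\langle v,\dot v\rangle \leq \lVert v\rVert\,\lVert \dot v\rVert$, so $\lVert \dot v(t)\rVert \leq \lVert v(t)\rVert$, and therefore $\lambda\lVert \dot z(t)\rVert = \lVert \dot v(t)+v(t)\rVert \leq 2\lVert v(t)\rVert$. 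Combined with $\int_{t_0}^{+\infty} t\lVert v(t)\rVert^2 dt < +\infty$ and $\lVert v(t)\rVert = o(1/t)$ from the previous step, this gives $\int_{t_0}^{+\infty} t\lVert \dot z(t)\rVert^2 dt < +\infty$, $\int_{t_0}^{+\infty} t\lVert \dot v(t)\rVert^2 dt < +\infty$ and $\lVert \dot z(t)\rVert = \lVert \dot v(t)\rVert = o(1/t)$, i.e. items \ref{regNewton:int-vel-grad} and \ref{regNewton:rate-grad-vel}.

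The step I expect to require the most care is the first one: recognizing the Moreau--Yosida structure hidden in \eqref{eq:Dyn_Newton_1}, verifying that $h = \frac{1}{\lambda} f_{1/\lambda}$ indeed satisfies $(\mathcal A)$ with the same minimal value and minimizers as $f$, and then carrying out the bookkeeping so that the integral and $o(1/t)$ estimates for $h(w)$ and $\nabla h(w)$ translate \emph{exactly} into the stated estimates for $f(z)$ and $\nabla f(z)$. Once that dictionary is in place, the monotonicity computation giving $\lVert \dot v\rVert \leq \lVert v\rVert$ is short and everything else is routine.
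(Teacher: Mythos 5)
Your proof is correct, but it follows a genuinely different route from the paper. The paper proves Theorem \ref{Theorem regNewton} by a direct Lyapunov analysis: it introduces $\phi(t) = f(z_*) - f(z(t)) - \langle v(t), z_* - z(t)\rangle + \frac{\lambda}{2}\|z_*-z(t)\|^2$, shows $\frac{d}{dt}\phi \le \inf_{\cH} f - f(z(t))$ to get the integral estimate of the values and the boundedness of the trajectory, and then exploits the inequality $\langle \dot v(t),\dot z(t)\rangle \ge \frac{1}{L}\|\dot v(t)\|^2$ (where $L$ is a Lipschitz constant of $\nabla f$ on a ball containing the trajectory) to derive the integral and pointwise estimates for $\dot z$, $\dot v$ and $v$, concluding with Opial's lemma. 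You instead observe that $w = z + \frac{1}{\lambda}\nabla f(z)$ turns \eqref{eq:Dyn_Newton_1} into the unperturbed steepest descent for $h=\frac{1}{\lambda}f_{1/\lambda}$, so that Theorem \ref{theorem SD pert} (with $g\equiv 0$) can be invoked wholesale; the identity $h(w)-\inf h = \frac{1}{\lambda}(f(z)-\inf f)+\frac{1}{2\lambda^2}\|v\|^2$ and the relations $\dot w = -\frac{1}{\lambda}v = -\nabla h(w)$ then transfer items \ref{regNewton:int-fun}, \ref{regNewton:rate-fun}, the estimate $\int t\|v\|^2\,dt<\infty$, $\|v(t)\|=o(1/t)$ and the weak convergence, while the short monotonicity computation $\|\dot v\|\le\|v\|$, $\lambda\|\dot z\|\le 2\|v\|$ supplies the remaining velocity estimates. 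I checked the individual steps (the Moreau--Yosida dictionary, the applicability of $(\mathcal A)$ to $h$, the coincidence of $\inf$ and $\argmin$, the transfer of the weak limit using $v(t)\to 0$ strongly, and the inner-product argument with $\langle\dot v,\dot z\rangle\ge 0$) and found no gap. Your approach is arguably more in the spirit of the paper's overall philosophy of recycling the steepest-descent results rather than redoing a Lyapunov analysis, and it has the small advantage of never invoking the local Lipschitz constant $L$ of $\nabla f$ on the trajectory, replacing the paper's bound $\langle\dot v,\dot z\rangle\ge\frac{1}{L}\|\dot v\|^2$ by the cruder but sufficient $\langle\dot v,\dot z\rangle\ge 0$; the price is that you must first establish the Moreau-envelope reformulation, which the paper's self-contained computation avoids.
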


\begin{remark}
In \cite[Theorem 4.1]{ASv}, the authors show that both $\dot{z} \left( \cdot \right)$ and $\dot{v} \left( \cdot \right)$, and consequently $v \left( \cdot \right)$, belong to $\mathbb{L}^{2} \left( \left[ t_{0} , + \infty \right[ \right)$. Since $t \mapsto \left\lVert v \left( t \right) \right\rVert$ is nonincreasing, in \cite[Theorem 3.9]{ASv} it is shown that $\left\lVert v \left( t \right) \right\rVert = o \left( \frac{1}{\sqrt{t}} \right)$ as $t \to + \infty$.  By improving some integral estimates for the velocity and the gradient,  we can provide a faster convergence rate, namely of  $o \left( \frac{1}{t} \right)$ as $t \to + \infty$.
\end{remark}

Let us make the change of time variable $t=\tau(s)= \demi \gamma s^2$ in the dynamic \eqref{eq:Dyn_Newton_1}, where $\gamma$ is a positive constant to be adjusted later. 
Set $y(s):= z(\tau(s))$, $w(s):= v(\tau(s))= \nabla f(y(s) )$, and $s_{0} >0$ be such that $t_{0} = \tau \left( s_{0} \right)$.
On the one hand, by the derivation chain rule, we have for every $s \geq s_0$
\begin{equation}\label{eq:Dyn_Newton_10}
\dot{y} (s)= \dot{\tau}(s) \dot{z}(\tau(s)),  \quad \dot{w} (s)= \dot{\tau}(s) \dot{v}(\tau(s)).
\end{equation}
On the other hand, setting $t=\tau (s)$ in \eqref{eq:Dyn_Newton_1} gives 
\begin{equation}\label{eq:Dyn_Newton_11}
\begin{cases}
	\lambda \dot{z}(\tau(s)) +  \dot{v}(\tau(s)) + v(\tau(s)) = 0
	\vspace{1mm}\\
	v(\tau(s))=\nabla f( z(\tau(s))).
\end{cases}
\end{equation}
According to (\ref{eq:Dyn_Newton_10}), (\ref{eq:Dyn_Newton_11}), and
$ \dot{\tau}(s)= \gamma s $, we obtain
\begin{equation}\label{eq:Dyn_Newton_12}
\begin{cases}
	\frac{\lambda}{cs}\dot{y}(s) + \frac{1}{\gamma s}\dot{w}(s) + w(s) = 0
	\vspace{1mm}\\
	w(s) =\nabla f( y(s))
\end{cases}
\end{equation}
or, equivalently,
\begin{equation}\label{eq:Dyn_Newton_13}
\begin{cases}
	\lambda \dot{y}(s) + \dot{w}(s) + \gamma s  w(s) = 0
	\vspace{1mm}\\
	w(s) =\nabla f( y(s)).
\end{cases}
\end{equation}
The convergence rate becomes 
\begin{equation}\label{eq:Dyn_Newton_14}
f (y(s))- \inf_{\cH} f = o \left( \frac{1}{s^2} \right) \textrm{ as } s \to + \infty.
\end{equation}
So, we have accelerated the dynamic for the values, passing from the convergence rate $1/t$ to $1/s^2$.
{Moreover, by proceeding as in Subsection \ref{sec:explicit}, we can return from \eqref{eq:Dyn_Newton_13}  to the dynamics \eqref{eq:Dyn_Newton_11}.}

Let us now come with the averaging process.
Given $s_{0} >0$, we attach to $y(\cdot)$ the new function $x: [s_{0}, +\infty[ \to \cH$ defined by 
\begin{equation}\label{eq:Dyn_Newton_15}
\dot{x}(s) + \dfrac{\delta}{s}(x(s)-y(s))  = 0 ,
\end{equation}
with $x(s_{0}) =x_0$ given in $\cH$,
and where $\delta$ is a positive coefficient to adjust.
Equivalently
\begin{equation}\label{eq:Dyn_Newton_15_b}
y(s)=  x(s) +  \dfrac{s}{\delta}\dot{x}(s)  .
\end{equation}

Let us formulate \eqref{eq:Dyn_Newton_13} in terms of $x$ by eliminating $y$.
We first  obtain 
\begin{equation}\label{eq:Dyn_Newton_16}
\begin{cases}
	\lambda \left( \dot{x}(s) +  \frac{s}{\delta}\ddot{x}(s) + \frac{1}{\delta}\dot{x}(s) \right)   + \dot{w}(s) + \gamma s  w(s) = 0
	\vspace{1mm}\\
	w(s) =\nabla f \left( x(s) +  \frac{s}{\delta}\dot{x}(s) \right).
\end{cases}
\end{equation}
After reduction we obtain
\begin{equation}\label{eq:Dyn_Newton_17}
\begin{cases}
	\ddot{x}(s) + \frac{\delta+ 1}{s} \dot{x}(s)    + 
	\frac{\delta} {\lambda s} \dot{w}(s) + \frac{\delta \gamma} {\lambda}  w(s) = 0
	\vspace{1mm}\\
	w(s) =\nabla f \left( x(s) +  \frac{s}{\delta}\dot{x}(s) \right) .
\end{cases}
\end{equation}
Take $\delta= \alpha -1$, $\gamma= \frac{\lambda}{\alpha -1}$ in \eqref{eq:Dyn_Newton_17}. We have $\delta \gamma= \lambda$, which gives
\begin{equation}\label{eq:Dyn_Newton_18}
\begin{cases}
	\ddot{x}(s) + \frac{\alpha}{s} \dot{x}(s)    + 
	\frac{\alpha -1} {\lambda s} \dot{w}(s) +  w(s) = 0
	\vspace{1mm}\\
	w(s) =\nabla f\left(  x(s) +  \frac{s}{\alpha -1}\dot{x}(s)\right).
\end{cases}
\end{equation}

\noindent According to the general properties of the averaging process
\begin{equation}\label{eq:Dyn_Newton_15_c}
y(s)=  x(s) +  \frac{s}{\alpha -1}\dot{x}(s),
\end{equation}
which has been already studied, see \eqref{eq:change var24_b}, we obtain the following theorem for a dynamics which combines explicit and implicit Hessian driven damping in a new way.

\begin{theorem}\label{Thm-implicit-explicit-Hessian}
{Suppose that  $f \colon \cH \to \R$ satisfies $(\mathcal A)$.}	
Let $x: [s_0, +\infty[ \to \cH$ be a solution trajectory of	
\begin{equation}\label{eq:Dyn_Newton_20}
	\begin{cases}
		\ddot{x}(s) + \dfrac{\alpha}{s} \dot{x}(s)    + 
		\dfrac{\alpha -1} {\lambda s} \dot{w}(s) +  w(s) = 0
		\vspace{1mm}\\
		w(s) =\nabla f\left(  x(s) +  \dfrac{s}{\alpha -1}\dot{x}(s)\right).
	\end{cases}
\end{equation}
Assume that $\alpha>1$. Then the following statements are true:
\begin{enumerate}					
	\item \label{implicit-explicit-Hessian:int-grad} (integral estimate of the gradients) \quad 
	$
	\int_{s_{0}}^{+\infty} s^3 \left\lVert w \left( s \right) \right\rVert ^{2}  ds < +\infty;
	$
	
	\item \label{implicit-explicit-Hessian:rate-grad} (convergence  of  gradients towards zero) \quad
	$
	\left\lVert w \left( s \right) \right\rVert = o \left( \frac{1}{s^{2}} \right) \textrm{ as } s \to + \infty;
	$		
	
	\item \label{implicit-explicit-Hessian:rate-vel} (convergence of velocities towards zero) \quad
	$
	\left\lVert \dot{x} \left( s \right) \right\rVert = o \left( \frac{1}{s} \right) \textrm{ as } s \to + \infty;
	$		
	
	\item \label{implicit-explicit-Hessian:traj} 
	the solution trajectory $x(s)$ converges weakly as $s \to +\infty$, and its limit belongs to $S=\argmin f$.\vspace{1ex}
	
	If $\alpha>3$, then
	\item \label{implicit-explicit-Hessian:rate-fun} (convergence of values towards minimal value) \quad
	$
	f(x(s)) -\inf_{\cH} f = o \left( \frac{1}{s^{2}} \right) \textrm{ as } s \to + \infty.
	$	
\end{enumerate}
\end{theorem}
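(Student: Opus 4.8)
The plan is to prove Theorem \ref{Thm-implicit-explicit-Hessian} by \emph{reversing} the time-scaling-and-averaging construction that led from the regularized Newton system \eqref{eq:Dyn_Newton_1} to \eqref{eq:Dyn_Newton_20}, and then invoking Theorem \ref{Theorem regNewton} in the role played by Theorem \ref{theorem SD pert} in the proof of Theorem \ref{Thm-rescale-averaging-implicit}. Concretely, given a solution trajectory $x(\cdot)$ of \eqref{eq:Dyn_Newton_20} with Cauchy data $x(s_0)=x_0$, $\dot{x}(s_0)=x_1$, I would set $y(s):=x(s)+\frac{s}{\alpha-1}\dot{x}(s)$ and $w(s):=\nabla f(y(s))$. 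Since every manipulation in \eqref{eq:Dyn_Newton_13}--\eqref{eq:Dyn_Newton_18} is an equivalence, reading them backwards shows that $y$ solves $\lambda\dot{y}(s)+\dot{w}(s)+\gamma s\,w(s)=0$ with $\gamma=\frac{\lambda}{\alpha-1}$ and $w=\nabla f(y)$, while $x$ is recovered from $y$ through the averaging ODE $\dot{x}(s)+\frac{\alpha-1}{s}(x(s)-y(s))=0$, $x(s_0)=x_0$; this is the very same averaging relation, with the same coefficient $\frac{\alpha-1}{s}$, that appears in Theorem \ref{Thm-rescale-averaging-implicit}, and the identity $y(s_0)=x_0+\frac{s_0}{\alpha-1}x_1$ makes the initial data consistent. (Solutions are understood in the same strong/a.e.\ sense as for \eqref{eq:Dyn_Newton_1}, so $y$ inherits the regularity of $z$.)

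Next, I would undo the time scaling: with $\tau(s)=\frac12\gamma s^2$ and its inverse $\sigma(t)=\sqrt{2t/\gamma}$, put $z(t):=y(\sigma(t))$ and $v(t):=w(\sigma(t))=\nabla f(z(t))$. Using $\dot{\sigma}(t)=\frac{1}{\gamma\sigma(t)}$, the factor $\gamma s$ in front of $w$ is turned exactly into $1$, so $z$ solves the regularized Newton system \eqref{eq:Dyn_Newton_1}. Theorem \ref{Theorem regNewton} then yields $f(z(t))-\inf_\cH f=o(1/t)$, $\|\nabla f(z(t))\|=o(1/t)$, $\|\dot{z}(t)\|=o(1/t)$, $\int_{t_0}^{+\infty}t\|\nabla f(z(t))\|^2\,dt<+\infty$, and weak convergence of $z(t)$ to some $x_*\in S$. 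Changing the variable back via $t=\tau(s)$ (so $dt=\gamma s\,ds$ and $\tau(s)$ grows like $s^2$) immediately gives $f(y(s))-\inf_\cH f=o(1/s^2)$, $\|w(s)\|=o(1/s^2)$, $\int_{s_0}^{+\infty}s^3\|w(s)\|^2\,ds<+\infty$, $\|\dot{y}(s)\|=\gamma s\,\|\dot{z}(\tau(s))\|=o(1/s)$, and $y(s)\rightharpoonup x_*$; this already settles statements \ref{implicit-explicit-Hessian:int-grad} and \ref{implicit-explicit-Hessian:rate-grad}.

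Finally, I would transfer the remaining conclusions from $y$ to $x$ through the averaging, replaying verbatim the arguments from the proof of Theorem \ref{Thm-rescale-averaging-implicit}: write $x(s)=\int_{s_0}^s y(u)\,d\mu_s(u)+\xi(s)$ with the probability measure $\mu_s=\frac{s_0^{\alpha-1}}{s^{\alpha-1}}\delta_{s_0}+(\alpha-1)\frac{u^{\alpha-2}}{s^{\alpha-1}}\,du$ and the negligible term $\xi(s)=-\frac{s_0^{\alpha}}{(\alpha-1)s^{\alpha-1}}x_1$. For \ref{implicit-explicit-Hessian:rate-vel}, the convolution identity $\dot{x}(s)=\frac{\alpha-1}{s^\alpha}\int_{s_0}^s u^{\alpha-1}\dot{y}(u)\,du+\frac{s_0^\alpha}{s^\alpha}x_1$ together with $\|\dot{y}(u)\|=o(1/u)$ and Lemma \ref{lem lim-0} gives $\|\dot{x}(s)\|=o(1/s)$; for \ref{implicit-explicit-Hessian:traj}, weak convergence of $x(s)$ to $x_*$ follows because averaging preserves weak convergence (again Lemma \ref{lem lim-0}); and for \ref{implicit-explicit-Hessian:rate-fun} with $\alpha>3$, boundedness of $x(\cdot)$ together with local Lipschitz continuity of $f$ on bounded sets absorbs $\xi$ (which contributes $O(1/s^{\alpha-3})$ after multiplication by $s^2$), while Jensen's inequality applied to $\mu_s$, combined with $f(y(u))-\inf_\cH f=o(1/u^2)$, $\alpha>3$ and Lemma \ref{lem lim-0}, controls the averaged term, yielding $f(x(s))-\inf_\cH f=o(1/s^2)$. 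The only genuinely new verification is the algebraic reversal in the first step --- checking that each equivalence in \eqref{eq:Dyn_Newton_13}--\eqref{eq:Dyn_Newton_18} can be read backwards and that the Cauchy data of $x$, $y$ and $z$ are correctly matched --- but this is routine bookkeeping; everything after is a faithful replay of the proof of Theorem \ref{Thm-rescale-averaging-implicit} with \eqref{eq:Dyn_Newton_1} and Theorem \ref{Theorem regNewton} substituted for (SD) and Theorem \ref{theorem SD pert}.
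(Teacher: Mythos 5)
Your proposal is correct and follows essentially the same route as the paper: the paper likewise identifies $y(s)=x(s)+\frac{s}{\alpha-1}\dot{x}(s)$ as a solution of the rescaled regularized Newton system \eqref{eq:Dyn_Newton_24} with matched Cauchy data, invokes Theorem \ref{Theorem regNewton} after undoing the time scaling, and then transfers the estimates to $x$ via the identical averaging representation and the arguments of Theorem \ref{Thm-rescale-averaging-implicit}. Your write-up merely makes explicit some substitution and bookkeeping steps that the paper leaves as ``similar arguments.''
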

\begin{proof}
\noindent Let $x(\cdot)$ be a solution trajectory of \eqref{eq:Dyn_Newton_20} which satisfies the Cauchy data $x(s_0) =x_0, \dot{x}(s_0)=x_1$, and $w \left( s_{0} \right) := \nabla f \left( x_{0} + \frac{s_{0}}{\alpha-1} x_{1} \right)$.
In the same way as in the proof of Theorem \ref{Thm-rescale-averaging-implicit}, we first show that such a solution is reached by considering first the solution $y(\cdot)$ of the system
\begin{equation}\label{eq:Dyn_Newton_24}
	\begin{cases}
		\lambda \dot{y}(s) + \dot{w}(s) + \frac{\lambda s}{\alpha -1}  w(s) = 0
		\vspace{1mm}\\
		w(s) =\nabla f( y(s)) \\
		y(s_0)= x_0 + \frac{s_0}{\alpha -1}x_1 ,
	\end{cases}
\end{equation}
and then
\begin{equation}\label{eq:Dyn_Newton_25}
	\begin{cases}
		\dot{x}(s) + \frac{\alpha -1}{s}(x(s)-y(s))  = 0 
		\vspace{2mm}\\
		x(t_0)= x_0 .
	\end{cases}
\end{equation}
Indeed, \eqref{eq:Dyn_Newton_25} gives $\dot{x}(s_0) = \frac{\alpha -1}{s_0}(y(s_{0})-x(s_{0}))$, which, by the second equation of \eqref{eq:Dyn_Newton_24}, equals to $x_1$.	
We have already seen that $x$ can be seen as an averaging process \eqref{eq:Dyn_Newton_25} of $y$ as follows
\begin{equation}\label{eq:Dyn_Newton_26}
	x(s) =   \int_{s_0}^s y(u)\,  d\mu_{s} (u) + \xi (s),
\end{equation}
where $\mu_s$ is the measure on $[s_0, s]$ defined by
$$
\mu_s = \frac{ s_0^{\alpha -1}}{ s^{\alpha -1}} \delta_{s_0} +  (\alpha -1) \frac{u^{\alpha -2}}{s^{\alpha -1}} du,
$$
{where} $\delta_{s_0}$ is the Dirac measure at $s_0$ and
\vspace*{-10pt}
\begin{equation*}
	\xi (s):=- \frac{{s_0}^{\alpha}}{(\alpha -1)s^{\alpha -1}} x_1.
\end{equation*}
The statements \ref{implicit-explicit-Hessian:int-grad} and \ref{implicit-explicit-Hessian:rate-grad} follow from Theorem \ref{Theorem regNewton} after time rescaling and integration by substitution. In order to show \ref{implicit-explicit-Hessian:traj}, we need to pass from the convergence of $y$ to that of $x$ by using the interpretation of $x$ as an average of $y$ plus a negligible term. Moreover, we know that $y$ converges weakly to an element in $S$ thanks to Theorem \ref{Theorem regNewton}. The convergence of $x$ follows from similar arguments as in Theorem \ref{Thm-rescale-averaging-implicit}.

Finally, since the averaging process is the same as in Theorem \ref{Thm-rescale-averaging-implicit}, the convergence rate of $1/s^{2}$ for the values \ref{implicit-explicit-Hessian:rate-fun} follows by the same arguments.
\end{proof}

\subsection{General Hessian damping coefficient{: extension to bilevel convex optimization}}
Given $\beta_0$ a positive damping coefficient, consider the more general form of the dynamic
\begin{equation}\label{general_damping_1}
\ddot{x}(s) + \frac{\alpha}{s}\dot{x}(s) + \nabla f\left(x(s)+  \beta_0 \frac{s}{\alpha -1}\dot{x}(s) \right)  = 0.
\end{equation}
Our study in the previous subsections concerned the case $\beta_0 = 1$.   This section shows that the time rescaling and averaging technique links the dynamical system \eqref{general_damping_1} with a bilevel convex optimization problem and thus emphasizes the versatility of the approach proposed in this paper.

Let us introduce 
\begin{equation}\label{general_damping_2}
y(s)\eqdef x(s)+  \beta_0 \frac{s}{\alpha -1}\dot{x}(s), 
\end{equation}
so that \eqref{general_damping_1} is written equivalently 
\begin{equation}\label{general_damping_3}
\ddot{x}(s) + \frac{\alpha}{s}\dot{x}(s) + \nabla f\left(y(s)\right)  = 0.
\end{equation}
Let us reformulate \eqref{general_damping_3} as a differential equation with $y(s)$ as the state variable. According to \eqref{general_damping_2} we have 
\begin{equation}\label{general_damping_4}
\dot{x}(s)= \dfrac{\alpha -1}{\beta_0 s}(y(s)-x(s)) ,
\end{equation}
which after derivation gives
\begin{equation}\label{general_damping_5}
\ddot{x}(s)= \dfrac{\alpha -1}{\beta_0 s}(\dot{y}(s)-\dot{x}(s)) -\dfrac{\alpha -1}{\beta_0 s^2}(y(s)-x(s)) .
\end{equation}
Combining \eqref{general_damping_3} and \eqref{general_damping_5} we obtain
\begin{equation}\label{general_damping_6}
\dfrac{\alpha -1}{\beta_0 s}(\dot{y}(s)-\dot{x}(s)) -\dfrac{\alpha -1}{\beta_0 s^2}(y(s)-x(s)) + \frac{\alpha}{s}\dot{x}(s) + \nabla f\left(y(s)\right)  = 0.
\end{equation}
By replacing $\dot{x}(s)$ by its formulation given in \eqref{general_damping_4} we obtain 
$$
\dfrac{\alpha -1}{\beta_0 s}\dot{y}(s) + \left(  \frac{\alpha}{s} -\dfrac{\alpha -1}{\beta_0 s}\right)\dfrac{\alpha -1}{\beta_0 s} (y(s)-x(s)) - \frac{\alpha -1}{\beta_0 s^2}(y(s)-x(s)) + \nabla f\left(y(s)\right)  = 0.
$$
\noindent After reduction we get
\begin{equation}\label{general_damping_8}
\dfrac{\alpha -1}{\beta_0 s}\dot{y}(s) +\frac{(\alpha -1)^2 (\beta_0 -1)}{\beta_0^2 s^2}(y(s)-x(s)) + \nabla f\left(y(s)\right)  = 0.
\end{equation}
Equivalently
\begin{equation}\label{general_damping_9}
\dot{y}(s) + \dfrac{\beta_0 s}{\alpha -1} \nabla f\left(y(s)\right) +\frac{(\alpha -1) (\beta_0 -1)}{\beta_0 s}(y(s)-x(s))  = 0.
\end{equation}
Putting together \eqref{general_damping_4} and \eqref{general_damping_9} we obtain the  system
\begin{equation}\label{general_damping_10}
\begin{cases}
	\dot{y}(s) + \dfrac{\beta_0 s}{\alpha -1} \nabla f\left(y(s)\right) +\dfrac{(\alpha -1) (\beta_0 -1)}{\beta_0 s}(y(s)-x(s))  = 0 \vspace{2mm}
	\\
	\dot{x}(s) + \dfrac{\alpha -1}{\beta_0 s}(x(s)-y(s))  = 0 .
\end{cases}
\end{equation}

\noindent Let us now consider time scaling of the above system {as in \eqref{change var275b}}.
Set 
$$s= \sqrt{2(\alpha -1)t} \, \mbox{ 	and }  \, y (\sqrt{2(\alpha -1)t})=Y(t), \, x (\sqrt{2(\alpha -1)t})=X(t).  $$

A similar calculation as in Subsection \ref{sec:time_scale}  gives
\begin{equation}\label{general_damping_11}
\begin{cases}
	\dot{Y}(t) + \beta_0 \nabla f\left(Y(t)\right) +\dfrac{(\alpha -1) (\beta_0 -1)}{2\beta_0 t}(Y(t)-X(t))  = 0 \vspace{2mm}
	\\
	\dot{X}(t) + \dfrac{\alpha -1}{2\beta_0 t}(X(t)-Y(t ))  = 0 .
\end{cases}
\end{equation}
This is a perturbation of the steepest descent dynamical system in the product space $\cH \times \cH$. Precisely, setting $Z(t)= (Y(t), X(t)) \in \cH \times \cH$, we have
\begin{equation}\label{general_damping_12}
\dot{Z}(t) + \beta_0  \nabla \Psi \left(Z(t)\right) +\frac{(\alpha -1) }{2\beta_0 t}A(Z(t))  = 0,
\end{equation}
where $\Psi (Z)= f(Y)$ and $A:\cH \times \cH \to \cH \times \cH$ is the linear operator whose matrix is given by
\begin{center}
$A= $$\begin{pmatrix}
	\beta_0 -1 & -(\beta_0 -1)  \\
	-1 & +1 
\end{pmatrix}$$
$
\end{center}
The case $\beta_0=2$ is of particular interest since then
$A$ is  a symmetric operator which is positive semidefinite. The study of this system then follows from classical results concerning gradient dynamical systems with multiscale aspects, see for example the work of Attouch and Czarnecki \cite{AC}.

The dynamical system \eqref{general_damping_12} for $\beta_0 = 2$ fits in the dynamics
\begin{equation}\label{general_damping_13}
\dot{Z}(t) + \beta_0  \nabla \Psi \left(Z(t)\right) + \varepsilon \left( t \right) \nabla \Phi \left( Z(t) \right)  = 0,
\end{equation}
with $\varepsilon \left( t \right) := \frac{(\alpha -1) }{2\beta_0 t}$ and $\Phi \left( \cdot \right) = \frac{1}{2} \left\lVert \cdot \right\rVert _{A}^{2}$, where $\left\lVert \cdot \right\rVert _{A}$ is the seminorm induced by the symmetric and positive semidefinite operator $A$.
The differential equation \eqref{general_damping_13} can be seen as a continuous time approach of the bilevel optimization problem
\begin{equation}\label{general_damping_14}
\min_{\cH \times \cH} \left\lbrace \Phi \left( Z \right) \ | \ Z \in D := \argmin_{\cH \times \cH} \Psi  \right\rbrace .
\end{equation}
Let $Z_{*}$ be a solution of \eqref{general_damping_14}, meaning that $\Phi \left( Z_{*} \right) = \inf_{D} \Phi$ and $\Psi \left( Z_{*} \right) = \inf_{\cH \times \cH} \Psi$.
Define for $t \geq t_{0}$
\begin{equation*}
\Psi_{\Phi} \left( t \right) := \beta_0 \left( \Psi \left(Z(t)\right) - \inf\nolimits_{\cH \times \cH} \Psi \right) + \varepsilon \left( t \right) \left( \Phi \left(Z(t)\right) - \inf\nolimits_{D} \Phi \right) \geq 0 .
\end{equation*}
Indeed the above quantity is nonnegative because we are in the particular simple case where $\Phi(Y,X)= \demi\|Y-X\|^2$ is nonnegative, $D= \argmin_{\cH} f  \times \cH$ and hence $\inf_{D} \Phi =0$.
Let us compute
\begin{align*}
\dfrac{d}{dt} \left( \dfrac{1}{2} \left\lVert Z \left( t \right) - Z_{*} \right\rVert ^{2} \right) & = \left\langle Z \left( t \right) - Z_{*} , \dot{Z} \left( t \right) \right\rangle = - \beta_0 \left\langle Z \left( t \right) - Z_{*} ,  \nabla \Psi \left(Z(t)\right) \right\rangle - \varepsilon \left( t \right) \left\langle Z \left( t \right) - Z_{*} , \nabla \Phi \left( Z(t) \right) \right\rangle \nonumber \\
& \leq - \beta_0 \left( \Psi \left(Z(t)\right) - \inf\nolimits_{\cH \times \cH} \Psi \right) - \varepsilon \left( t \right) \left( \Phi \left(Z(t)\right) - \inf\nolimits_{D} \Phi \right) = - \Psi_{\Phi} \left( t \right) .
\end{align*}
This shows that $\lim_{t \to + \infty} \left\lVert Z \left( t \right) - Z_{*} \right\rVert \in \R$ exists and that
$$
\int_{t_{0}}^{+ \infty} \Psi_{\Phi} \left( t \right) dt < + \infty.
$$
This yields $\liminf_{t \rightarrow +\infty} t \Psi_{\Phi} \left( t \right) =0$.
On the other hand, we have
\begin{align*}
\dfrac{d}{dt} \Psi_{\Phi} \left( t \right)
& = \beta_0 \left\langle \nabla \Psi \left( Z \left( t \right) \right) , \dot{Z} \left( t \right) \right\rangle + \varepsilon \left( t \right) \left\langle \nabla \Phi \left( Z \left( t \right) \right) , \dot{Z} \left( t \right) \right\rangle + \dot{\varepsilon} \left( t \right) \left( \Phi \left(Z(t)\right) - \inf_{D} \Phi \right) 
\leq - \left\lVert \dot{Z} \left( t \right) \right\rVert ^{2} .
\end{align*}
This means that $\Psi_{\Phi}$ is decreasing and furthermore
\begin{equation*}	
\dfrac{d}{dt} \left( t \Psi_{\Phi} \left( t \right)  \right) = \Psi_{\Phi} \left( t \right) + t \dfrac{d}{dt} \Psi_{\Phi} \left( t \right) \leq \Psi_{\Phi} \left( t \right) - t \left\lVert \dot{Z} \left( t \right) \right\rVert ^{2}.
\end{equation*}
Since $\Psi_{\Phi} \in \mathbb{L}^{1} \left( \left[ t_{0} , + \infty \right[ \right)$, we conclude that $t \| \dot{Z} \left( t \right) \| ^{2} \in \mathbb{L}^{1} \left( \left[ t_{0} , + \infty \right[ \right)$ and furthermore $\lim_{t \to + \infty} t \Psi_{\Phi} \left( t \right) = 0$.
Consequently,
\begin{equation*}
\lim_{t \to + \infty} t \left( \Psi \left(Z(t)\right) - \inf\nolimits_{\cH \times \cH} \Psi \right) = 0 \textrm{ and } \lim_{t \to + \infty} \left( \Phi \left(Z(t)\right) -\inf\nolimits_{D} \Phi \right) = 0 .
\end{equation*}
Since $\Psi$ and $\Phi$ are convex and lower semicontinuous, the second condition of Opial's lemma is fulfilled.  Thus $Z(t)$ converges weakly to a solution of \eqref{general_damping_14} as $t \rightarrow +\infty$.

Moreover, according to the definition of $\Phi$ and $A$, it holds
\begin{equation*}
\Phi \left(Z(t)\right) = \dfrac{1}{2} \left\langle Z(t) , A \left( Z \left( t \right) \right) \right\rangle = \dfrac{1}{2} \left\lVert X(t) - Y(t) \right\rVert ^{2} = \dfrac{2\beta_0^{2}}{\left( \alpha-1 \right) ^{2}} t^{2} \left\lVert \dot{X} (t) \right\rVert ^{2} \to 0 \ \mbox{as} \ t \rightarrow +\infty.
\end{equation*}
According to the definitions of $\Psi$ and $\Phi$
we conclude that there exists $x_* \in \argmin_{\cH} f$ such that $(Y(t), X(t))$ converges weakly to $(x_*,x_*)$ as  $t \rightarrow +\infty$,
$$f(Y(t)) - \inf\nolimits_{\cH} f = o\left(\frac{1}{t} \right) \ \mbox{and} \ \lim_{t \rightarrow +\infty} \|X(t) - Y(t)\| =  \lim_{t \rightarrow +\infty} t \|\dot X(t)\| = 0.$$
This means that $x(s)$ converges weakly to $x_*$ as $s \rightarrow +\infty$,  
$$f\left(x(s)+  \frac{2s}{\alpha -1}\dot{x}(s)\right) - \inf\nolimits_{\cH} f = o\left(\frac{1}{s^2} \right) \ \mbox{and} \  \lim_{s \rightarrow +\infty} s \|\dot x(s)\| = 0.$$

In the following theorem we collect the convergence properties of the trajectory of the dynamical system \eqref{general_damping_1} in case $\beta_0=2$.

\begin{theorem}\label{Thm-general-Hessian-coefficient}
{Suppose that  $f \colon \cH \to \R$ satisfies $(\mathcal A)$.}	
Let $x: [s_0, +\infty[ \to \cH$ be a solution trajectory of	
\begin{equation*}
	\ddot{x}(s) + \frac{\alpha}{s}\dot{x}(s) + \nabla f\left(x(s)+  \frac{2s}{\alpha -1}\dot{x}(s) \right)  = 0.
\end{equation*}
Assume that $\alpha>1$. Then the following statements are true:
\begin{enumerate}		
	\item \label{general-Hessian-coefficient:rate-fun} (convergence of values towards minimal value) \quad
	$
	f\left(x(s)+  \frac{2s}{\alpha -1}\dot{x}(s)\right)  -\inf_{\cH} f = o \left( \frac{1}{s^{2}} \right) \textrm{ as } s \to + \infty;
	$	
	
	\item \label{general-Hessian-coefficient:rate-vel} (convergence of velocities towards zero) \quad
	$
	\left\lVert \dot{x} \left( s \right) \right\rVert = o \left( \frac{1}{s} \right) \textrm{ as } s \to + \infty;
	$		
	
	\item \label{general-Hessian-coefficient:traj} 
	the solution trajectory $x(s)$ converges weakly as $s \to +\infty$, and its limit belongs to $S=\argmin f$.
	
\end{enumerate}
\end{theorem}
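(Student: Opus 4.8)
The statement is entirely a matter of reassembling the ingredients built up in the discussion that precedes it, so the proof will only make explicit the order in which those ingredients are chained together. First I would set $y(s) \eqdef x(s) + \frac{2s}{\alpha-1}\dot{x}(s)$, i.e. \eqref{general_damping_2} with $\beta_0 = 2$, which rewrites the dynamic under consideration as the coupled first-order system \eqref{general_damping_10} with $\beta_0 = 2$. Applying the time scaling $s = \sqrt{2(\alpha-1)t}$ and setting $Y(t) = y(\sqrt{2(\alpha-1)t})$, $X(t) = x(\sqrt{2(\alpha-1)t})$, $Z(t) = (Y(t),X(t))$ turns it into \eqref{general_damping_12}, that is the perturbed gradient system \eqref{general_damping_13} in $\cH\times\cH$ with $\Psi(Y,X) = f(Y)$, $\Phi(Y,X) = \demi\|Y-X\|^2$ and $\varepsilon(t) = \frac{\alpha-1}{4t}$, a continuous-time dynamic for the bilevel problem \eqref{general_damping_14} whose solution set is $\{(x_*,x_*) : x_* \in \argmin_\cH f\}$.

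Next I would run the Lyapunov analysis carried out above on the energy $\Psi_\Phi(t) = 2\left(\Psi(Z(t)) - \inf_{\cH\times\cH}\Psi\right) + \varepsilon(t)\left(\Phi(Z(t)) - \inf_D\Phi\right) \geq 0$: the estimate $\frac{d}{dt}\left(\demi\|Z(t)-Z_*\|^2\right) \leq -\Psi_\Phi(t)$, valid for an arbitrary solution $Z_*$ of \eqref{general_damping_14}, yields that $\lim_{t\to+\infty}\|Z(t)-Z_*\|$ exists and that $\Psi_\Phi \in \mathbb{L}^1([t_0,+\infty[)$; combining this with $\frac{d}{dt}\Psi_\Phi(t) \leq -\|\dot Z(t)\|^2$, hence with $\frac{d}{dt}\left(t\Psi_\Phi(t)\right) \leq \Psi_\Phi(t) - t\|\dot Z(t)\|^2$, one obtains $t\|\dot Z\|^2 \in \mathbb{L}^1$ and $\lim_{t\to+\infty} t\Psi_\Phi(t) = 0$, whence $t\left(f(Y(t)) - \inf_\cH f\right) \to 0$ and $\Phi(Z(t)) \to 0$.

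Since $\Psi$ and $\Phi$ are convex and weakly lower semicontinuous, every weak sequential cluster point of $Z(t)$ minimizes $\Psi$ on $\cH\times\cH$ and $\Phi$ on $D$, hence solves \eqref{general_damping_14}; together with the existence of $\lim\|Z(t)-Z_*\|$ for all such $Z_*$, Opial's lemma then gives weak convergence of $Z(t)$ to some $Z_* = (x_*,x_*)$ with $x_* \in \argmin_\cH f$. From $\Phi(Z(t)) = \demi\|X(t)-Y(t)\|^2 = \frac{8}{(\alpha-1)^2}t^2\|\dot X(t)\|^2 \to 0$ I would read off $t\|\dot X(t)\| \to 0$. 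It then remains to undo the time scaling via $t = \frac{s^2}{2(\alpha-1)}$, for which $Y(t) = y(s)$, $X(t) = x(s)$ and $\dot X(t) = \frac{\alpha-1}{s}\dot x(s)$; this transfers the three facts just obtained into, respectively, $f\!\left(x(s)+\frac{2s}{\alpha-1}\dot x(s)\right) - \inf_\cH f = o(1/s^2)$, $\|\dot x(s)\| = o(1/s)$, and weak convergence of $x(s)$ to $x_* \in S$ as $s\to+\infty$, which are exactly claims \ref{general-Hessian-coefficient:rate-fun}--\ref{general-Hessian-coefficient:traj}.

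The step I expect to be the main (and in fact essentially the only) delicate one is the verification of the two hypotheses of Opial's lemma for the trajectory $Z(\cdot)$ — the existence of $\lim\|Z(t)-Z_*\|$ for every solution $Z_*$ of the bilevel problem, and the fact that weak cluster points of $Z(t)$ are themselves solutions — the latter resting on weak lower semicontinuity of $\Psi$ and $\Phi$ together with $\Psi(Z(t)) \to \inf\Psi$ and $\Phi(Z(t)) \to \inf_D\Phi$; all remaining steps amount to the chain rule and elementary integral estimates. Alternatively, one could bypass the hands-on Lyapunov computation and invoke directly the results of Attouch and Czarnecki \cite{AC} on multiscale gradient dynamical systems applied to \eqref{general_damping_13}.
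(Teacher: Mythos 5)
Your proposal is correct and follows essentially the same route as the paper: the paper's ``proof'' of Theorem \ref{Thm-general-Hessian-coefficient} is precisely the development preceding its statement (substitution \eqref{general_damping_2} with $\beta_0=2$, reduction to the multiscale gradient system \eqref{general_damping_12}--\eqref{general_damping_13}, the Lyapunov function $\Psi_{\Phi}$, Opial's lemma, and the back-transfer via $t=\frac{s^2}{2(\alpha-1)}$), and your constants ($\varepsilon(t)=\frac{\alpha-1}{4t}$, $\Phi(Z(t))=\frac{8}{(\alpha-1)^2}t^{2}\lVert\dot X(t)\rVert^{2}$, $\dot X(t)=\frac{\alpha-1}{s}\dot x(s)$) all check out. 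Nothing is missing.
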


\section{Nonsmooth case}\label{sec:nonsmooth}
In this section, we show that the approach proposed in this paper extends to the case when $f:\cH \to \rinf$ is assumed to be a (possibly nonsmooth) proper,  convex and lower semicontinuous function.

\subsection{The continuous dynamics}
As a basic property {of the dynamical system \eqref{change var27_thm}}, the couple of variables $(x,y)$ defined in Subsection \ref{subsec:rates} satisfies the following differential system, which involves only first-order derivatives in time and space 
\begin{equation}\label{eq:fos2}
\begin{cases}
	\dot{y}(s) + \frac{s}{\alpha -1}\nabla f( y(s)) & = 0
	\vspace{2mm}\\
	\dot x(s)+ \frac{\alpha -1}{s} (x(s) -y(s)) & = 0.
\end{cases}
\end{equation}
\noindent This naturally suggests the extension of the above results to the nonsmooth case (replace  the gradient of $f$ by its subdifferential).
We thus get the differential inclusion system
\begin{equation}\label{eq:fos2b}
\begin{cases}
	\dot{y}(s) + \frac{s}{\alpha -1}\partial f( y(s)) & \ni 0
	\vspace{2mm}\\
	\dot x(s)+ \frac{\alpha -1}{s} (x(s) -y(s)) & = 0.
\end{cases}
\end{equation}

Solving this system gives generalized solutions to the second-order differential inclusion
\begin{equation}\label{eq:fos2b2}
\ddot{x}(s) + \frac{\alpha}{s}\dot{x}(s) + \partial f\left(x(s)+   \frac{s}{\alpha -1}\dot{x}(s) \right)  \ni 0
\end{equation}
whose direct study raises several difficulties, {recalling that $x(s_0) = x_0$ and $\dot{x}(s_0) = x_1$}.
To extend the results  of the previous section to this nonsmooth case, we need to avoid the arguments using the Lipschitz continuity of $\nabla f$.
So, we are led to consider the Cauchy problem for \eqref{eq:fos2b2} with 
initial data $x_0 \in \dom f$ and $x_1=0$, that is with initial velocity equal to zero.
So doing we have $y({s_0})= x({s_0})=x_0$, which allows to interpret $x$ as an average of $y$ without correcting term ($\xi=0$ with the notations of Subsection 2.6).

The  existence and uniqueness of a strong solution to the associated Cauchy problem relies on the equivalent formulation of \eqref{eq:fos2b} as a perturbation of the  generalized steepest descent dynamical system in the product space $\cH \times \cH$. Precisely, define $F:  \cH \times \cH \to \rinf$, for any $Z=(y,x) \in \cH \times \cH$ by
\vspace*{-10pt}
\begin{equation*}
F(Z) = f(y),
\end{equation*}
and let  $G: \cH \times \cH \to \cH \times \cH $ be the operator defined by
\begin{equation}
G(Z)= (0,  x -y) .
\end{equation}
Then \eqref{eq:fos2b} is written equivalently as
\begin{equation}
\dot{Z}(s) + \frac{s}{\alpha -1}\partial F (Z(s)) + \frac{\alpha -1}{s} G(Z(s))\ni 0.
\end{equation}
The initial condition becomes $Z(t_0)= (x_0,x_0)$ which belongs to 
$\dom F= \dom f \times \cH$.
According to the classical results concerning the Lipschitz perturbation of evolution equations governed by subdifferentials of convex functions,  see  \cite[Proposition 3.12]{breziseqevo}, we obtain the existence and uniqueness of a global strong solution of the Cauchy problem associated with \eqref{eq:fos2b}.
As a major advantage of the time scaling and averaging techniques, the  arguments used in the previous section still work in this more general nonsmooth situation. The rules of differential calculus are still valid for strong solutions, see \cite[chapter VIII.2]{brezisAF}, and Jensen's inequality is still valid for a nonsmooth function $f$ {(see e.g. \cite[Proposition 9.24]{BC})}. Indeed Jensen's inequality is classical for a smooth convex function $f$. Its extension to the nonsmooth convex case can be obtained by first writing it for the Moreau-Yosida regularization $f_{\lambda}$ of $f$, then passing to the limit when $\lambda  \downarrow 0$. According to the monotone convergence of   $f_{\lambda}$ towards $f$, we can pass to the limit in the integral term thanks to the Beppo Levi monotone convergence theorem.
Therefore we obtain the following theorem.
\vspace*{-5pt}
\begin{theorem}\label{Thm-nonsmooth}
Let $f:\cH \to \rinf$ be a proper, lower semicontinuous, and convex function such that $S=\argmin f \neq \emptyset$. 
Let $x: [s_{0}, +\infty[ \to \cH$ be a solution trajectory of 
\begin{equation}\label{change var27_thm_nonsmooth}
	\ddot{x}(s) + \frac{\alpha}{s}\dot{x}(s) + \partial f\left(x(s)+   \frac{s}{\alpha -1}\dot{x}(s) \right)  \ni 0 ,
\end{equation}
which satisfies the initial conditions $x(s_{0}) \in \dom f$ and $\dot{x}(s_{0}) =0$.
Assume that $\alpha >1$. Then 
\begin{enumerate}
	\item the solution trajectory $x(s)$ converges weakly as $s \to +\infty$, and its limit belongs to $S=\argmin f$. \vspace{1ex}
	
	If $\alpha >3$, then
	\item 
	(convergence of values towards minimal value) \quad
	$
	f \left( y \left( s \right) \right) -\inf_{\cH} f = o \left( \frac{1}{s^{2}} \right) \textrm{ as } s \to + \infty.
	$
\end{enumerate}	
\end{theorem}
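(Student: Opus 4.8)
The plan is to mimic the proof of Theorem \ref{Thm-rescale-averaging-implicit} almost verbatim, the only changes being that the steepest descent is replaced by the subgradient flow and that the smooth energy estimates of Theorem \ref{theorem SD pert} are replaced by their classical nonsmooth counterparts. First I would take a strong solution $x(\cdot)$ of \eqref{change var27_thm_nonsmooth} with $x(s_{0})=x_0\in\dom f$ and $\dot{x}(s_{0})=0$, set $y(s):=x(s)+\frac{s}{\alpha-1}\dot{x}(s)$, and check that the pair $(y,x)$ is a strong solution of the first-order inclusion system \eqref{eq:fos2b}. Since \eqref{eq:fos2b} is well posed --- which is precisely the content of the product-space reformulation together with \cite[Proposition 3.12]{breziseqevo} recalled before the statement --- \emph{every} strong solution of \eqref{change var27_thm_nonsmooth} arises in this way, and since $\dot{x}(s_{0})=0$ one has $y(s_{0})=x(s_{0})=x_0\in\dom f$, so that $x$ is a genuine average of $y$ without the correcting term $\xi$.

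Next I would run the time-scaling step on the first line of \eqref{eq:fos2b}: putting $t=\tau(s)=\frac{s^{2}}{2(\alpha-1)}$ and $z(\tau(s))=y(s)$, the chain rule for strong solutions (\cite[chapter VIII.2]{brezisAF}) turns $\dot{y}(s)+\frac{s}{\alpha-1}\partial f(y(s))\ni 0$ into the generalized steepest descent $\dot{z}(t)+\partial f(z(t))\ni 0$ with $z(t_{0})=x_0\in\dom f$. For this flow I would invoke the classical theory of subgradient evolution equations --- the nonsmooth analogue of Theorem \ref{theorem SD pert} with $g\equiv 0$: the map $t\mapsto f(z(t))$ is nonincreasing, $\int_{t_{0}}^{+\infty}\pa{f(z(t))-\inf_{\cH}f}\,dt<+\infty$ (integrate $\frac{d}{dt}\demi\norm{z(t)-z_{*}}^{2}\le-\pa{f(z(t))-\inf_{\cH}f}$ for $z_{*}\in S$), hence $f(z(t))-\inf_{\cH}f=o(1/t)$, and $z(t)\rightharpoonup x_{*}$ for some $x_{*}\in S$. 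Rescaling back gives $y(s)\rightharpoonup x_{*}$ and $f(y(s))-\inf_{\cH}f=o(1/s^{2})$ as $s\to+\infty$.

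Finally I would transfer these properties to $x$ through the averaging encoded in the second line of \eqref{eq:fos2b}, exactly as in \eqref{def:x}--\eqref{eq:Jensen} but now with $\xi\equiv 0$. Integrating $\dot{x}(s)+\frac{\alpha-1}{s}(x(s)-y(s))=0$ yields
\[
x(s)=\int_{s_{0}}^{s}y(u)\,d\mu_{s}(u),
\]
with $\mu_{s}$ the probability measure from \eqref{proba-formulation}. For the weak convergence I would test $x(s)$ against $v\in\cH$ and use Lemma \ref{lem lim-0} --- which needs only $\alpha>1$ --- to pass from $\dotp{y(s)-x_{*}}{v}\to 0$ to $\dotp{x(s)-x_{*}}{v}\to 0$, the usual ``convergence implies ergodic convergence'' principle. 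For the value rate I would apply Jensen's inequality --- valid here for the proper lsc convex $f$ by the Moreau--Yosida / Beppo Levi argument recalled before the statement --- to get $f(x(s))-\inf_{\cH}f\le\int_{s_{0}}^{s}\pa{f(y(u))-\inf_{\cH}f}\,d\mu_{s}(u)$, substitute $f(y(u))-\inf_{\cH}f=\varepsilon(u)/u^{2}$ with $\varepsilon(u)\to 0$, multiply through by $s^{2}$, and conclude via Lemma \ref{lem lim-0} applied with exponent $\alpha-3>0$; this is the only place $\alpha>3$ enters. (The bound written for $f(y(s))$ in (ii) in fact follows already from the time-scaling step alone.)

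The main obstacle is not analytical delicacy in the estimates --- every manipulation of the smooth proof survives precisely because of the two facts recalled before the statement (differential calculus for strong solutions, and Jensen's inequality for nonsmooth convex functions via the Moreau--Yosida regularization). The real point to nail down is the \emph{reduction} itself: that solving \eqref{eq:fos2b} genuinely produces all generalized solutions of the second-order inclusion \eqref{eq:fos2b2} with the prescribed Cauchy data, and that the corresponding strong solutions are regular enough ($\dot{z}\in L^{2}_{\mathrm{loc}}$, which is exactly why $x_0\in\dom f$ is imposed) to justify the integral computations. The hypothesis $\dot{x}(s_{0})=0$ is indispensable here: it is what keeps $y(s_{0})\in\dom f$ and forces $\xi\equiv 0$, thereby circumventing any appeal to a Lipschitz gradient that no longer exists.
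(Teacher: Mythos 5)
Your proposal is correct and follows essentially the same route as the paper: the paper's argument for Theorem \ref{Thm-nonsmooth} is exactly the reduction to the first-order system \eqref{eq:fos2b}, well-posed via the product-space reformulation and \cite[Proposition 3.12]{breziseqevo}, followed by time scaling to the generalized steepest descent and the averaging/Jensen transfer with $\xi\equiv 0$ forced by $x(s_0)\in\dom f$ and $\dot{x}(s_0)=0$. Your parenthetical remark that the rate in item (ii), as literally stated for $f(y(s))$, already follows from the time-scaling step (and your separate Jensen-based derivation of the corresponding rate for $f(x(s))$ under $\alpha>3$) is a fair and careful reading of what the statement is actually asserting.
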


\begin{remark}
As a consequence of $ii)$, we have that $x(s)$ remains in the domain of $f$ for all $s\geq t_0$. This viability property strongly depends on the fact that the initial position belongs to the domain of $f$,  and that the initial velocity has been taken equal to zero.
\end{remark}

\begin{remark}
The  above theorem is valid in an infinite dimensional setting, which makes it applicable to nonlinear PDEs. 
According to the classical theory for the  steepest descent dynamic in the nonsmooth convex case, we have that at each $s>s_0$  the right derivative of 
$y(s)$ exists and satisfies 
$$
-\left(\dfrac{dy}{ds}\right)^{+}(s)= \frac{s}{\alpha -1}\left(\partial f( y(s))\right)^0
$$
where $\left(\partial f( y(s))\right)^0$ is the element of minimum norm of $\partial f( y(s))$.
Thus, $y$ may exhibit shocks. By contrast, $x$, which is an average of $y$,
has a continuous derivative, hence does not exhibit shocks. Still $x$ is not twice differentiable.
\end{remark}

\subsection{A proximal type algorithm for nonsmooth minimization}
\smallskip
Recall the second-order differential inclusion \eqref{eq:fos2b2} associated with  \eqref{basic-min} in Section \ref{sec:nonsmooth} 
\begin{equation}\label{prox-al:01}
\ddot{x}(s) + \frac{\alpha}{s}\dot{x}(s) + \partial f\left(x(s)+   \frac{s}{\alpha -1}\dot{x}(s) \right)  \ni 0,
\end{equation}
or equivalently formulated  as a first order system (see \eqref{eq:fos2b})
\begin{equation}\label{prox-al:02}
\begin{cases}
	\dot{y}(s) + \frac{s}{\alpha -1}\partial f( y(s)) & \ni 0
	\vspace{2mm}\\
	\dot x(s)+ \frac{\alpha -1}{s} (x(s) -y(s)) & = 0.
\end{cases}
\end{equation}
We consider the following implicit discretization in time of  \eqref{prox-al:02}, written for every $k \geq 0$ as
\begin{equation}\label{prox-al:03}
\begin{cases}
	y_{k+1} - y_{k} + \frac{s_{k+1}}{\alpha - 1} \partial f \left( y_{k+1} \right) & \ni 0
	\vspace{2mm}\\
	x_{k+1} - x_{k} + \frac{\alpha - 1}{s_{k+1}} \left( x_{k} - y_{k+1} \right) & = 0 .
\end{cases}
\end{equation}
This gives rise to the following numerical algorithm
\begin{equation}\label{prox-al:scheme}
(\forall k \geq 0) \quad \begin{cases}
	y_{k+1} & := \prox_{\frac{s_{k+1}}{\alpha - 1} f} \left( y_{k} \right) 
	\vspace{1mm}\\
	x_{k+1}	& := \left( 1 - \frac{\alpha - 1}{s_{k+1}} \right) x_{k} + \frac{\alpha - 1}{s_{k+1}} y_{k+1},
\end{cases}
\end{equation}
where $y_0, x_0 \in \cH$ are arbitrary initial points. This algorithm has some analogy with the relaxed inertial proximal algorithm for maximally monotone operators in \cite{AC-RIPA}.

For the step size sequence $\left( s_{k} \right) _{k \geq 0}$ we impose the following recurrence condition
\begin{equation}\label{prox-al:sk:rec}
s_{0} := 0 \qquad \textrm{ and } \qquad s_{k+1}^{2} - \left( \alpha - 1 \right) s_{k+1} = s_{k}^{2} \quad \forall k \geq 0, 
\end{equation}
which relates to the Nesterov step size rule. Indeed, by denoting $t_{k} := \frac{s_{k}}{\alpha - 1}$, we have for every $k \geq 0$
\begin{equation*}
s_{k+1} := \dfrac{\alpha - 1 + \sqrt{\left( \alpha - 1 \right) ^{2} + 4s_{k}^{2}}}{2} \ \mbox{or, equivalently,} \ t_{k+1} := \dfrac{1 + \sqrt{1 + 4t_{k}^{2}}}{2},
\end{equation*}
which is nothing but the classic step size rule of  Nesterov's accelerated gradient method \cite{Nest1,Nest2,BT}.

We will nevertheless continue to work with the sequence $\left( s_{k} \right) _{k \geq 0}$ in order to emphasize that all the convergence statements provided in this section are valid for $\alpha > 1$,  contrary  to the stronger condition $\alpha > 3$ for the low resolution case (see Remark \ref{remark alpha>1}). 
We have that $s_{k} \sim k$, precisely, $s_{k} \geq \left( k+1 \right) \left( \alpha-1 \right) / 2$ for every $k \geq 0$ (cf. \cite[Lemma 4.3]{BT}).
Furthermore, by a telescoping sum argument, we have from  \eqref{prox-al:sk:rec}  
$$
s_{k+1}^{2} - \left( \alpha - 1 \right) \sum_{i=0}^{k} s_{i+1} = s_{0}^{2} .
$$
Since $s_{0} = 0$, we can conclude that for every $k \geq 0$
\begin{equation}\label{prox-al:sk}
s_{k}^{2} = \left( \alpha - 1 \right) \sum_{i=0}^{k} s_{i}.
\end{equation}
Let us also notice that \eqref{prox-al:scheme} can be written only in terms of the sequence $\left( x_{k} \right) _{k \geq 0}$
$$
x_{k+1} - x_{k} - \dfrac{s_{k} - \left( \alpha - 1 \right)}{s_{k+1}} \left( x_{k} - x_{k-1} \right) + \partial f \left( x_{k} + \dfrac{s_{k+1}}{\alpha - 1} \left( x_{k+1} - x_{k} \right) \right) \ni 0 \quad \forall k \geq 1,
$$
which can be seen as a direct discretization of \eqref{prox-al:01}.

In the following we first analyze the convergence properties of the sequence $\left( y_{k} \right) _{k \geq 0}$ generated by \eqref{prox-al:scheme}. Then, we transfer these to the sequence $\left( x_{k} \right) _{k \geq 0}$. One can easily observe the similarity between this approach and the continuous time one, where we transferred the converge properties of the trajectory $y(\cdot)$ generated by the continuous steepest descent system after time scaling to the averaged trajectory  $x(\cdot)$.
\begin{theorem}\label{thm:prox1}
Let $\left( y_{k} \right) _{k \geq 0}$ be the sequence generated by \eqref{prox-al:scheme}.
The following statements are true:
\begin{enumerate}
	\item (summability of function values) \quad
	$\sum_{k \geq 0} s_{k} \left( f \left( y_{k} \right) - \inf_{\cH} f \right) < + \infty ;$
	
	\item (summability of subgradients)
	there exists a sequence $\left( \eta_{k} \right) _{k \geq 0} \subseteq \cH$ such that $\eta_{k} \in \partial f \left( y_{k} \right)$ for every $k \geq 0$ and
	$\sum_{k \geq 0} s_{k}^{2} \left\lVert \eta_{k} \right\rVert ^{2}$ $< + \infty .$
	
	In particular, if $f$ is differentiable, then \quad
	$
	\left\lVert \nabla f \left( y_{k} \right) \right\rVert = {o \left( \frac{1}{\sqrt{\sum_{i = 0}^k s_{i}^{2}}} \right) = o \left( \frac{1}{k\sqrt{k}} \right)} \textrm{ as } k \to + \infty ;
	$
	
	\item (convergence of values towards minimal value) \quad
	$f \left( y_{k} \right) - \inf_{\cH} f = \mathcal{O} \left( \frac{1}{\sum_{i=0}^{k} s_{i}} \right) = o \left( \frac{1}{s_{k}^{2}} \right) = o \left( \frac{1}{\left( k+1 \right) ^{2}} \right)$ as $k \to + \infty ;$			
	
	\item the sequence of iterates $\left( y_{k} \right) _{k \geq 0}$ converges weakly as $k \to + \infty$, and its limit belongs to $S = \argmin_{\cH} f$.
\end{enumerate}
\end{theorem}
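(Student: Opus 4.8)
The plan is to recognise that the first line of \eqref{prox-al:scheme} is exactly the proximal point algorithm applied to $f$ with variable step sizes $\lambda_{k+1} := \tfrac{s_{k+1}}{\alpha-1}$, and to run the classical energy estimate for that scheme (the sequence $(x_k)$, being a mere averaging of $(y_k)$, plays no role here). Concretely, $y_{k+1} = \prox_{\lambda_{k+1}f}(y_k)$ gives $\eta_{k+1} := \tfrac{1}{\lambda_{k+1}}(y_k - y_{k+1}) \in \partial f(y_{k+1})$ for every $k \geq 0$, and for $k=0$ I pick any $\eta_0 \in \partial f(y_0)$ (in particular $\eta_0 = \nabla f(y_0)$ in the differentiable case), this choice being immaterial since $s_0 = 0$. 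Fixing $z_{*} \in S$, testing the subgradient inequality of $f$ at $y_{k+1}$ against $z_{*}$, and using the three-point identity $2\dotp{y_k - y_{k+1}}{y_{k+1}-z_{*}} = \norm{y_k - z_{*}}^2 - \norm{y_{k+1}-z_{*}}^2 - \norm{y_k - y_{k+1}}^2$ together with $\norm{y_k - y_{k+1}} = \lambda_{k+1}\norm{\eta_{k+1}}$, I obtain the master inequality
\begin{equation*}
	\norm{y_{k+1} - z_{*}}^2 + 2\lambda_{k+1}\pa{f(y_{k+1}) - \inf\nolimits_{\cH} f} + \lambda_{k+1}^2\norm{\eta_{k+1}}^2 \le \norm{y_k - z_{*}}^2 \qquad \forall k \ge 0 .
\end{equation*}

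Summing this telescoping inequality over $k$ and recalling $\lambda_k = s_k/(\alpha-1)$ yields at once statements (i) $\sum_{k\geq 0} s_k(f(y_k)-\inf_{\cH} f) < +\infty$ and (ii) $\sum_{k\geq 0} s_k^2\norm{\eta_k}^2 < +\infty$ (the $k=0$ terms vanish). To upgrade to pointwise rates I use two monotonicity facts: $f(y_{k+1}) \le f(y_k)$, immediate from the minimality defining the prox; and $\norm{\eta_{k+1}} \le \norm{\eta_k}$, which follows from the monotonicity of $\partial f$ applied to $\eta_k \in \partial f(y_k)$, $\eta_{k+1}\in\partial f(y_{k+1})$ and $y_k - y_{k+1} = \lambda_{k+1}\eta_{k+1}$ (giving $\dotp{\eta_k}{\eta_{k+1}} \ge \norm{\eta_{k+1}}^2$, hence the claim by Cauchy--Schwarz). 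Combining monotonicity of $f(y_k)$ with the identity $s_k^2 = (\alpha-1)\sum_{i=0}^k s_i$ from \eqref{prox-al:sk} gives $\pa{f(y_k)-\inf_{\cH} f}\sum_{i=0}^k s_i \le \sum_{i=0}^{+\infty} s_i\pa{f(y_i)-\inf_{\cH} f} < +\infty$, which is statement (iii) in the form $\cO(1/\sum_{i=0}^k s_i) = \cO(1/s_k^2) = \cO(1/(k+1)^2)$, the last identities using \eqref{prox-al:sk} and $s_k \ge (k+1)(\alpha-1)/2$ (cf. \cite[Lemma 4.3]{BT}); a tail argument (split the sum at index $\lceil k/2\rceil$ and use $s_{\lceil k/2\rceil}^2/s_k^2 \to 1/4$) sharpens $\cO$ to $o$. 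Applying the same scheme to (ii), with $\norm{\eta_k}$ nonincreasing, gives $\norm{\eta_k} = o\big(1/\sqrt{\sum_{i=0}^k s_i^2}\big) = o(1/(k\sqrt{k}))$ in the differentiable case, since $\sum_{i=0}^k s_i^2 \ge (\alpha-1)^2 k^3/12$.

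For the weak convergence statement (iv) I would invoke Opial's lemma. The master inequality shows $\norm{y_k - z_{*}}$ is nonincreasing, hence convergent, for every $z_{*} \in S$, so Opial's first hypothesis holds. For the second, note that (ii) together with $\norm{\eta_k}$ nonincreasing and $s_k \to +\infty$ forces $\norm{\eta_k} \to 0$; if $\bar{y}$ is a weak sequential cluster point of $(y_k)$, then from $\eta_k \in \partial f(y_k)$, $y_k \rightharpoonup \bar{y}$ and $\eta_k \to 0$ strongly, the demiclosedness of the maximal monotone operator $\partial f$ yields $0 \in \partial f(\bar{y})$, i.e. $\bar{y} \in S$. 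Opial's lemma then gives weak convergence of the whole sequence $(y_k)$ to a point of $S$.

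This is essentially a classical analysis of the proximal point method, so no deep obstacle is expected; the only points requiring care are justifying the two monotonicities cleanly — especially the one for $\norm{\eta_k}$ via the monotonicity of $\partial f$ — and the slightly fiddly split-sum estimates needed to pass from the $\cO$ rates to the $o$ rates, where the growth $s_k \sim (\alpha-1)k/2$ inherited from \eqref{prox-al:sk:rec} is exactly what makes the argument work. Note finally that the condition $\alpha>1$ (rather than $\alpha>3$) enters only through $\lambda_{k+1} = s_{k+1}/(\alpha-1) > 0$, in accordance with Remark \ref{remark alpha>1}.
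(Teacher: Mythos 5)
Your proposal is correct and follows essentially the same route as the paper: the identical master inequality obtained from the prox characterization, the three-point identity and convexity, the two monotonicity facts (for $f(y_k)-\inf_{\cH}f$ and for $\left\lVert \eta_k \right\rVert$ via monotonicity of $\partial f$), and the identity \eqref{prox-al:sk} to convert summability into rates. The only cosmetic differences are that you prove the ``nonincreasing plus summable implies $o$'' step by an explicit tail-splitting argument where the paper cites \cite[Lemma 5.31]{BaCo} and \cite[Lemma 22]{AC2}, and that you verify the second Opial condition via demiclosedness of the graph of $\partial f$ rather than via $\lim_{k\to+\infty} f(y_k)=\inf_{\cH}f$ and weak lower semicontinuity of $f$; both variants are valid.
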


\begin{proof}
Let $k \geq 0$ be fixed. Take $z_{*} \in S= \argmin f$. According to \eqref{prox-al:scheme} there exists $\eta_{k+1} \in \partial f \left( y_{k+1} \right)$ such that $y_{k+1} - y_{k} + \frac{s_{k+1}}{\alpha - 1} \eta_{k+1} = 0$. By  convexity of $f$ we deduce that
\begin{align}
	\dfrac{1}{2} \left\lVert y_{k+1} - z_{*} \right\rVert ^{2} & = \dfrac{1}{2} \left\lVert y_{k} - z_{*} \right\rVert ^{2} + \left\langle y_{k+1} - z_{*} , y_{k+1} - y_{k} \right\rangle - \dfrac{1}{2} \left\lVert y_{k+1} - y_{k} \right\rVert ^{2} \nonumber \\
	& = \dfrac{1}{2} \left\lVert y_{k} - z_{*} \right\rVert ^{2} - \dfrac{s_{k+1}}{\alpha - 1} \left\langle y_{k+1} - z_{*} ,  \eta_{k+1} \right\rangle - \dfrac{1}{2} \dfrac{s_{k+1}^{2}}{\left( \alpha - 1 \right) ^{2}} \left\lVert \eta_{k+1} \right\rVert ^{2} \nonumber \\
	& \leq \dfrac{1}{2} \left\lVert y_{k} - z_{*} \right\rVert ^{2} - \dfrac{s_{k+1}}{\alpha - 1} \left( f \left( y_{k+1} \right) - \inf_{\cH} f \right) - \dfrac{1}{2} \dfrac{s_{k+1}^{2}}{\left( \alpha - 1 \right) ^{2}} \left\lVert \eta_{k+1} \right\rVert ^{2} . \label{prox-al:04}
\end{align}
The statements $i)$ and $ii)$ follow from \cite[Lemma 5.31]{BaCo}. In addition, the limit $\lim_{k \to + \infty} \left\lVert y_{k} - z_{*} \right\rVert \in \R$ exists, which means that the first condition of the discrete Opial's lemma is fulfilled. 

The convergence rate in $ii)$ follows from the fact that sequence $\left(  \eta_{k} \right) _{k \geq 0}$ is nonincreasing.  Indeed, we have from the monotonicity of $\partial f$
\begin{align*}
	\left\lVert \eta_{k} \right\rVert^{2} - \left\lVert \eta_{k+1} \right\rVert^{2} & = - 2 \left\langle \eta_{k+1} , \eta_{k+1} - \eta_{k} \right\rangle + \left\lVert \eta_{k+1} - \eta_{k} \right\rVert^{2}
	= \dfrac{\alpha - 1}{s_{k+1}} \left\langle y_{k+1} - y_{k} , \eta_{k+1} - \eta_{k} \right\rangle + \left\lVert \eta_{k+1} - \eta_{k} \right\rVert^{2} \geq 0 ,
\end{align*}
and the rate follows from  \cite[Lemma 22]{AC2}. Similarly, we notice that
\color{black}
the sequence $\left( f \left( y_{k} \right) - \inf_{\cH} f \right) _{k \geq 0}$ is nonincreasing as well. Precisely, for every $k \geq 0$ we have
$$
\left(f \left( y_{k} \right) - \inf_{\cH} f \right) -  \left(f \left( y_{k+1} \right) - \inf_{\cH} f \right) \geq \langle \eta_{k+1}, y_k-y_{k+1} \rangle  = \dfrac{\alpha - 1}{s_{k+1}} \left\lVert y_{k+1} - y_{k} \right\rVert ^{2} \geq 0.
$$
According to \cite[Lemma 22]{AC2} we get
\begin{equation*}
	f \left( y_{k} \right) - \inf_{\cH} f = o \left( \dfrac{1}{\sum_{i=0}^{k} s_{i}} \right) ,
\end{equation*}
which proves $iii)$, thanks to \eqref{prox-al:sk}.
Finally, since $\lim_{k \to + \infty} f \left( y_{k} \right) = \inf_{\cH} f$ and $f$ is convex and lower semicontinuous, the second condition of Opial's lemma is also fulfilled. This gives the weak convergence of the sequence $\left( y_{k} \right) _{k \geq 0}$  to an element in $S=\argmin f$.
\end{proof}

Let us consider the convergence properties of  $(x_k)_{k \geq 0}$ which are associated with the averaging process.
\begin{theorem}
Let $\left( x_{k} \right) _{k \geq 0}$ be the sequence generated by \eqref{prox-al:scheme}.
The following statements are true:
\begin{enumerate}
	\item (convergence of values) \quad
	$f \left( x_{k} \right) - \inf_{\cH} f = \mathcal{O} \left( \frac{1}{\sum_{i=0}^{k} s_{i}} \right) = \mathcal{O} \left( \frac{1}{s_{k}^{2}} \right) = \mathcal{O} \left( \frac{1}{\left( k+1 \right) ^{2}} \right)$ as $k \to + \infty ;$
	
	\item the sequence of iterates $\left( x_{k} \right) _{k \geq 0}$ converges weakly as $k \to + \infty$, and its limit belongs to $S = \argmin_{\cH} f$.
\end{enumerate}
\end{theorem}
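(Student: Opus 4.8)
\noindent The plan is to carry out the discrete counterpart of the averaging argument of Theorem~\ref{Thm-rescale-averaging-implicit}: first identify $x_k$ as an explicit weighted average of $y_1,\dots,y_k$, then transfer to $(x_k)_{k\ge0}$ the convergence rate and the weak convergence already established for $(y_k)_{k\ge0}$ in Theorem~\ref{thm:prox1}. To make the averaging structure explicit, note that by the recurrence $s_{k+1}^2-(\alpha-1)s_{k+1}=s_k^2$ in \eqref{prox-al:sk:rec} the $x$-update in \eqref{prox-al:scheme} rewrites, for every $j\ge0$, as $s_{j+1}^2 x_{j+1}=s_j^2 x_j+(\alpha-1)s_{j+1}y_{j+1}$; telescoping over $j=0,\dots,k-1$ and using $s_0=0$ gives, for every $k\ge1$,
\begin{equation*}
s_k^2\,x_k=(\alpha-1)\sum_{i=1}^k s_i\,y_i .
\end{equation*}
Combined with $s_k^2=(\alpha-1)\sum_{i=0}^k s_i$ from \eqref{prox-al:sk}, this yields $x_k=\sum_{i=1}^k\theta_{k,i}\,y_i$, with $\theta_{k,i}:=s_i/\sum_{j=1}^k s_j\ge0$ and $\sum_{i=1}^k\theta_{k,i}=1$, so that $x_k$ is a genuine convex combination of the proximal iterates $y_1,\dots,y_k$ --- the discrete counterpart of the probability measure $\mu_s$ of Subsection~\ref{subsec:rates}.

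For statement $(i)$, I would apply Jensen's inequality to the convex function $f$ and then invoke the summability estimate of Theorem~\ref{thm:prox1}$(i)$:
\begin{equation*}
f(x_k)-\inf_{\cH}f\le\sum_{i=1}^k\theta_{k,i}\bigl(f(y_i)-\inf_{\cH}f\bigr)=\frac{1}{\sum_{j=1}^k s_j}\sum_{i=1}^k s_i\bigl(f(y_i)-\inf_{\cH}f\bigr)\le\frac{C}{\sum_{j=0}^k s_j},
\end{equation*}
where $C:=\sum_{i\ge1}s_i(f(y_i)-\inf_{\cH}f)<+\infty$ (recall $s_0=0$). This is the bound $\mathcal{O}\bigl(1/\sum_{i=0}^k s_i\bigr)$; the two remaining forms follow at once from $\sum_{i=0}^k s_i=s_k^2/(\alpha-1)$ and from $s_k\ge(k+1)(\alpha-1)/2$.

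For statement $(ii)$, Theorem~\ref{thm:prox1}$(iv)$ gives $y_k\rightharpoonup y_*$ for some $y_*\in S=\argmin f$; in particular $(y_k)_{k\ge0}$ is bounded, hence so is $(x_k)_{k\ge0}$, being a convex combination of the $y_i$. Since $\sum_{j=1}^k s_j=s_k^2/(\alpha-1)\to+\infty$, for each fixed $i$ the weight $\theta_{k,i}\to0$ as $k\to+\infty$ while $\sum_i\theta_{k,i}=1$; the scalar Silverman--Toeplitz (Ces\`aro) lemma applied to the real sequence $\bigl(\langle y_i,v\rangle\bigr)_{i\ge1}$, for each $v\in\cH$, then gives $\langle x_k,v\rangle=\sum_{i=1}^k\theta_{k,i}\langle y_i,v\rangle\to\langle y_*,v\rangle$, i.e.\ $x_k\rightharpoonup y_*$. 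This is exactly the discrete version of the ``convergence entails ergodic convergence'' step used in the proof of Theorem~\ref{Thm-rescale-averaging-implicit}. The main (and essentially only) point requiring care is this last one: since weak, not strong, convergence is at stake one must pass through the scalar projections $\langle\,\cdot\,,v\rangle$ and check that the averaging weights vanish pointwise in $i$ --- which is immediate here from $\sum_{i=1}^k s_i\to+\infty$. Everything else is routine bookkeeping, in strict parallel with the continuous-time proof and with no new Lyapunov analysis.
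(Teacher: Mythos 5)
Your proof is correct and follows essentially the same route as the paper: the telescoped identity $s_k^2 x_k = (\alpha-1)\sum_{i=1}^k s_i y_i$ exhibiting $x_k$ as a convex combination of the $y_i$, Jensen's inequality combined with the summability from Theorem~\ref{thm:prox1}, and ergodic transfer of the weak convergence. The only difference is cosmetic: you spell out the ``convergence entails ergodic convergence'' step via the Toeplitz conditions on the weights $\theta_{k,i}$, which the paper invokes without detail.
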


\begin{proof}
We begin by interpreting $\left( x_{k} \right) _{k \geq 0}$ as an average of $\left( y_{k} \right) _{k \geq 0}$. Multiplying the second equation in \eqref{prox-al:scheme} by $s_{k+1}^{2}$ and using \eqref{prox-al:sk:rec}, we obtain for every $k \geq 0$
$$
s_{k+1}^{2} x_{k+1} = \left( s_{k+1}^{2} - \left( \alpha - 1 \right) s_{k+1} \right) x_{k} + \left( \alpha - 1 \right) s_{k+1} y_{k+1} = s_{k}^{2} x_{k} + \left( \alpha - 1 \right) s_{k+1} y_{k+1} .
$$
By a telescopic sum argument and using \eqref{prox-al:sk}, we obtain from here the following expression for each $k \geq 0$
$$
x_{k+1} = \dfrac{1}{s_{k+1}^{2}} \sum_{i=0}^{k+1} \left( \alpha - 1 \right) s_{i} y_{i} = \dfrac{1}{\sum_{i=0}^{k+1} s_{i}} \sum_{i=0}^{k+1} s_{i} y_{i} .
$$
Then, Jensen's inequality gives for every $k \geq 0$
\begin{align}
	f \left( x_{k+1} \right) - \inf_{\cH} f & =  \left( f - \inf_{\cH} f \right) \left( \dfrac{1}{\sum_{i=0}^{k+1} s_{i}} \sum_{i=0}^{k+1} s_{i} y_{i} \right) \nonumber \\
	& \leq  \dfrac{1}{\sum_{i=0}^{k+1} s_{i}} \sum_{i=0}^{k+1} s_{i} \left( f \left( y_{i} \right) - \inf_{\cH} f \right) 
	\leq \dfrac{\alpha -1}{s_{k+1}^{2}} \sum_{i \geq 0} s_{i} \left( f \left( y_{i} \right) - \inf_{\cH} f \right) , \label{prox:est}
\end{align}
where in the last equation we use \eqref{prox-al:sk}. In Theorem \ref{thm:prox1} we have shown $\sum_{i \geq 0} s_{i} \left( f \left( y_{i} \right) - \inf_{\cH} f \right) < + \infty$, which gives the announced convergence rates. The fact that $\left( x_{k} \right) _{k \geq 0}$ converges weakly to an element in $S=\argmin f$ as $k \to + \infty$ follows from the fact that convergence entails ergodic convergence.
\end{proof}

\section{Operator case}
What underlies our study is the first-order in time evolution system taken as a starting point and its convergence properties. In this regard, it is natural to consider also the evolution equation governed by a cocoercive operator. Indeed,  gradients of convex functions and  cocoercive operators are the two basic examples of monotone operators ensuring the convergence of the associated semi-groups of contractions.
We will successively examine a general cocoercive operator, then the additively structured case.
For monotone operators, the convergence rates are expressed in terms of velocities and residuals.

\subsection{General cocoercive case}
Recall that
a single-valued operator $M:\cH \rightarrow \cH$ is called $\rho$-cocoercive (with $\rho>0$) if 

\[
\langle M(x)-M(y),x-y\rangle \geq \rho\norm{M(x)-M(y)}^2\quad \forall x,y\in \cH.
\]

We mention the  following classical properties of cocoercive operators:

\textit{a)} If $M$ is  $\rho$-cocoercive, then it  is monotone and $\frac{1}{\rho}$-Lipschitz continuous, and hence maximally monotone. 

\textit{b)} If $f$ is a convex differentiable function whose gradient is $L$-Lipschitz continuous, then the operator $\nabla f$ is $1/L$-cocoercive (Baillon-Haddad theorem).

Let us come to our study. Given $M \colon \cH \to \cH$ a $\rho$-cocoercive operator, consider the monotone equation
\begin{equation}\label{op 01}
M \left( x \right) = 0 ,
\end{equation}
and the associated evolution equation
\begin{equation}\label{op 02}
\dot{z} \left( t \right) + M \left( z \left( t \right) \right) = 0 .
\end{equation}
We denote the solution set of \eqref{op 01} by $S$,  which we assume to be nonempty.
We recall the results in \cite[Theorems 11 and 16]{BC} which state that $z \left( t \right)$ converges weakly to a point in $S$. In addition
\begin{equation}\label{op 03}
\int_{t_{0}}^{+ \infty} \left\lVert \dot{z} \left( t \right) \right\rVert ^{2} dt = \int_{t_{0}}^{+ \infty} \left\lVert M \left( z \left( t \right) \right) \right\rVert ^{2} dt < + \infty 
\textrm{ and }
\left\lVert M \left( z \left( t \right) \right) \right\rVert = o \left( \dfrac{1}{\sqrt{t}} \right) \textrm{ as } t \to + \infty .
\end{equation}

Further we develop an analysis similar to the one in  Subsection \ref{sec:explicit} in the case of a gradient operator.
Set $y \left( s \right) := z \left( \tau \left( s \right) \right)$, where $\tau \colon \left[ t_{0} , + \infty \right[ \to \mathbb{R}^{+}$ is a continuously differentiable increasing function satisfying $\lim_{s \to + \infty} \tau \left( s \right) = + \infty$.
According to the derivation chain rule, we have
$$
\dot{y} \left( s \right) = \dot{\tau} \left( s \right) \dot{z} \left( \tau \left( s \right) \right)	
$$
which, by setting $t := \tau \left( s \right)$ in \eqref{op 02}, gives
$$
\dot{z} \left( \tau \left( s \right) \right) + M \left( z \left( \tau \left( s \right) \right) \right) = 0 .	
$$
Combining the two relations above gives
\begin{equation}\label{op 04}
\dot{y} \left( s \right) + \dot{\tau} \left( s \right) M \left( y \left( s \right) \right) = 0 .
\end{equation}
Taking the derivative with respect to $s$ of the above equation gives
\begin{equation}\label{op 05}
\ddot{y} \left( s \right) + \ddot{\tau} \left( s \right) M \left( y \left( s \right) \right) + \dot{\tau} \left( s \right) \dfrac{d}{ds} \left( M \left( y \left( s \right) \right) \right) = 0 .
\end{equation}
Multiplying both sides of \eqref{op 04} by $\frac{\alpha}{s}$, then adding the result to \eqref{op 05}, we obtain
\begin{equation}\label{op 06}
\ddot{y} \left( s \right) + \dfrac{\alpha}{s} \dot{y} \left( s \right) + \dot{\tau} \left( s \right) \dfrac{d}{ds} \left( M \left( y \left( s \right) \right) \right) + \left( \ddot{\tau} \left( s \right) + \dfrac{\alpha}{s} \dot{\tau} \left( s \right) \right) M \left( y \left( s \right) \right) = 0 .
\end{equation}
As a subsequent result of \eqref{op 03} we have
\vspace*{-10pt}
\begin{equation*}
\left\lVert M \left( y \left( s \right) \right) \right\rVert = o \left( \dfrac{1}{\sqrt{\tau \left( s \right)}} \right) \textrm{ as } s \to + \infty .
\end{equation*}
Taking
\vspace*{-10pt}
\begin{equation*}
\tau \left( s \right) = \dfrac{s^{2}}{2 \left( \alpha+1 \right)},
\end{equation*}
then \eqref{op 06} becomes
\begin{equation}\label{op 08}
\ddot{y} \left( s \right) + \dfrac{\alpha}{s} \dot{y} \left( s \right) + \dfrac{s}{\alpha+1} \dfrac{d}{ds} \left( M \left( y \left( s \right) \right) \right) + M \left( y \left( s \right) \right) = 0 .
\end{equation}
We obtain the rate
\vspace*{-10pt}
\begin{equation*}
\left\lVert M \left( y \left( s \right) \right) \right\rVert = o \left( \dfrac{1}{s} \right) \textrm{ as } s \to + \infty .
\end{equation*}
Moreover, it holds
\vspace*{-10pt}
\begin{equation*}
\int_{\tau \left( t_{0} \right)}^{+\infty} s \left\lVert M \left( y \left( s \right) \right) \right\rVert ^{2} ds < + \infty .
\end{equation*}

Now let us do the reverse, and start from  a solution  trajectory $y(\cdot)$ of \eqref{op 06} satisfying  $y \left( {s_{0}} \right) := y_{0}$ and $\dot{y} \left( {s_{0}} \right) := y_{1}$. Using similar arguments as in Theorem \ref{Thm-rescale-averaging}, we multiply both sides of \eqref{op 08} by $s^{\alpha}>0$ and get
\begin{equation*}
\dfrac{d}{ds} \left( s^{\alpha} \dot{y} \left( s \right) + \dfrac{s^{\alpha+1}}{\alpha+1} M \left( y \left( s \right) \right) \right) = 0 .
\end{equation*}
This leads to
\begin{equation}\label{op 07}
\dot{y} \left( s \right) + \dfrac{s}{\alpha+1} M \left( y \left( s \right) \right) = \dfrac{c_0}{s^{\alpha}} ,
\end{equation}
where $c_{0} \in \cH$ is a constant that can be determined from the Cauchy condition.
Set $z \left( t \right) := y \left( \sigma \left( t \right) \right)$, where
\begin{equation}\label{op 077}
\sigma \left( t \right) = \sqrt{2 \left( \alpha+1 \right) t} .
\end{equation}
After calculations similar to those made previously, we arrive at
$$
\dot{z} \left( t \right) + M \left( z \left( t \right) \right) = \dfrac{c}{t^{\frac{\alpha+1}{2}}},
$$
where $c \in \cH$ is a constant.
Our study therefore falls within the properties of the perturbed system 
\begin{equation}\label{eq:coer_10}
\dot{z} \left( t \right) + M\left( z \left( t \right) \right) = g(t),
\end{equation}
where, as $\alpha>1$,  the external perturbation  $g \colon \left[ t_{0} , + \infty \right[ \to \cH, g(t) = \frac{c}{t^{\frac{\alpha+1}{2}}} $, is such that
\begin{equation}\label{oppert 01}
\int_{t_{0}}^{+ \infty} \left\lVert g \left( t \right) \right\rVert dt < + \infty \, \textrm{ and } \int_{t_{0}}^{+ \infty} t \left\lVert g \left( t \right) \right\rVert ^{2} dt < + \infty .
\end{equation}

We have the following result.
\begin{theorem}\label{thm:cocoercive}
Let $z \colon \left[ t_{0} , + \infty \right[ \to \cH$ be a solution trajectory of
\begin{equation}
	\dot{z} \left( t \right) + M \left( z \left( t \right) \right) = g(t),
\end{equation}
where $g$ fulfils \eqref{oppert 01}. Then the following statements are true:
\begin{enumerate}
	\item 
	(convergence rate of the operator norm) \quad 
	$
	\left\lVert M \left( z \left( t \right) \right) \right\rVert = o \left( \frac{1}{\sqrt{t}}  \right) \textrm{ as } t \to + \infty ;
	$
	
	\item 
	(integral estimate of the operator norm) \quad
	$
	\int_{t_{0}}^{+\infty} \left\lVert M \left( z \left( t \right) \right) \right\rVert ^{2} dt < +\infty;
	$
	
	\item 
	the solution trajectory $z(t)$ converges weakly as $t \to +\infty$, and its limit belongs to $S = \Zer M$.
\end{enumerate}
\end{theorem}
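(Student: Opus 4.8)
The plan is to reproduce, in the perturbed setting, the Lyapunov argument behind \cite[Theorems 11 and 16]{BC}, letting the two integrability properties \eqref{oppert 01} of $g$ absorb the perturbation. Fix $z_{*} \in S = \Zer M$ and put $h(t) := \frac{1}{2} \norm{z(t) - z_{*}}^{2}$. Since $M(z_{*}) = 0$, the cocoercivity of $M$ yields
\[
	\dot{h}(t) = \dotp{z(t) - z_{*}}{\dot{z}(t)} = - \dotp{z(t) - z_{*}}{M(z(t)) - M(z_{*})} + \dotp{z(t) - z_{*}}{g(t)} \le - \rho \norm{M(z(t))}^{2} + \norm{z(t) - z_{*}} \norm{g(t)} .
\]
First I would discard the nonpositive term and apply the classical argument to $t \mapsto \norm{z(t) - z_{*}} = \sqrt{2h(t)}$ to get $\norm{z(t) - z_{*}} \le \norm{z(t_{0}) - z_{*}} + \int_{t_{0}}^{+\infty} \norm{g(s)} ds =: C < +\infty$, so the trajectory is bounded. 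Plugging this bound back in and integrating gives $\rho \int_{t_{0}}^{+\infty} \norm{M(z(t))}^{2} dt \le h(t_{0}) + C \int_{t_{0}}^{+\infty} \norm{g(t)} dt < +\infty$, which is statement (ii). Moreover $\dot{h}(t) \le C \norm{g(t)} =: v(t)$ with $v \in \mathbb{L}^{1}$ and $h \ge 0$; by the elementary fact that a function bounded below whose derivative is majorized by an $\mathbb{L}^{1}$ function has a limit at infinity, $\lim_{t \to +\infty} \norm{z(t) - z_{*}}$ exists. This is the first requirement of Opial's lemma.

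Next I would derive the rate (i). As $M$ is $\frac{1}{\rho}$-Lipschitz and $z$ is of class $\mathcal{C}^{1}$, the curve $t \mapsto M(z(t))$ is locally Lipschitz, hence differentiable at almost every $t$; at such $t$, dividing $\dotp{M(z(t+\varepsilon)) - M(z(t))}{z(t+\varepsilon) - z(t)} \ge \rho \norm{M(z(t+\varepsilon)) - M(z(t))}^{2}$ by $\varepsilon^{2} > 0$ and letting $\varepsilon \to 0$ gives $\dotp{\tfrac{d}{dt} M(z(t))}{\dot{z}(t)} \ge \rho \norm{\tfrac{d}{dt} M(z(t))}^{2}$. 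Substituting $\dot{z}(t) = -M(z(t)) + g(t)$ and completing the square leads to
\[
	\frac{d}{dt} \norm{M(z(t))}^{2} \le \frac{1}{2\rho} \norm{g(t)}^{2}, \qquad \text{hence} \qquad \frac{d}{dt} \left( t \norm{M(z(t))}^{2} \right) \le \norm{M(z(t))}^{2} + \frac{t}{2\rho} \norm{g(t)}^{2} .
\]
The right-hand side of the second inequality is integrable on $[t_{0}, +\infty[$ by statement (ii) together with the second condition in \eqref{oppert 01}, so $\lim_{t \to +\infty} t \norm{M(z(t))}^{2}$ exists; it must vanish because $\norm{M(z(\cdot))}^{2} \in \mathbb{L}^{1}$, and this is precisely statement (i).

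For statement (iii) I would conclude with Opial's lemma: its first condition is already available, and for the second, take any $t_{n} \to +\infty$ with $z(t_{n}) \rightharpoonup \bar{z}$ (the trajectory being bounded). By (i), $M(z(t_{n})) \to 0$ strongly, and since $M$ is maximally monotone its graph is sequentially closed in $\cH_{\mathrm{weak}} \times \cH_{\mathrm{strong}}$, whence $M(\bar{z}) = 0$, i.e. $\bar{z} \in S = \Zer M$; Opial's lemma then gives the weak convergence of $z(\cdot)$ to an element of $S$. The step I expect to be the main obstacle is the rigorous justification of the differential inequality for $\norm{M(z(t))}^{2}$ — the almost-everywhere differentiability of $t \mapsto M(z(t))$ and the passage to the limit in the cocoercivity inequality — whereas the remaining steps are routine once the two integrability conditions on $g$ are used in the right places.
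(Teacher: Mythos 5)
Your proposal is correct and follows essentially the same route as the paper's proof: boundedness of the trajectory via a Gronwall-type argument using $\int \lVert g \rVert \, dt < +\infty$, the integral estimate (ii) from cocoercivity at $z_{*}$, the rate (i) from the differentiated cocoercivity inequality $\langle \tfrac{d}{dt} M(z(t)), \dot{z}(t) \rangle \ge \rho \lVert \tfrac{d}{dt} M(z(t)) \rVert^{2}$ combined with Young's inequality and the $\mathbb{L}^{1}$-majorization lemma (the paper's \cite[Lemma 5.1]{AAS}), and (iii) from Opial's lemma together with the weak--strong closedness of the graph of $M$. The only cosmetic differences are that the paper packages the anchor estimate into an energy functional $\cE_{T}$ with a future-integral term and differentiates $t \lVert M(z(t)) \rVert^{2}$ directly rather than first establishing $\tfrac{d}{dt} \lVert M(z(t)) \rVert^{2} \le \tfrac{1}{2\rho} \lVert g(t) \rVert^{2}$; your explicit attention to the a.e.\ differentiability of $t \mapsto M(z(t))$ is a point the paper leaves implicit.
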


\noindent Combining the above theorem with the time scaling defined in \eqref{op 077} gives the following result.

\begin{theorem}\label{thm:cocoer_1}
Let $M:\cH \rightarrow \cH$ be a  cocoercive operator and
$y: [s_{0}, +\infty[ \to \cH$ be a solution trajectory of	
\begin{equation}
	\ddot{y} \left( s \right) + \dfrac{\alpha}{s} \dot{y} \left( s \right) + \dfrac{s}{\alpha + 1} \dfrac{d}{ds} \left( M\left( y \left( s \right) \right) \right) + M \left( y \left( s \right) \right) = 0,
\end{equation}
where $\alpha >1$.  Then the following statements are true:
\begin{enumerate}
	\item 
	(convergence rate of the operator norm) \quad
	$
	\left\lVert M\left( y \left( s \right) \right) \right\rVert = o \left( \frac{1}{s} \right) \textrm{ as } s \to + \infty ;
	$
	
	\item 
	(integral estimate of the gradients) \quad
	$
	\int_{s_{0}}^{+\infty} s \left\lVert M \left( y \left( s \right) \right) \right\rVert ^{2} ds < + \infty ;
	$
	
	\item 
	the solution trajectory $y(s)$ converges weakly as $s \to +\infty$, and its limit belongs to $S = \Zer M$.
\end{enumerate}	
%
%
%
\end{theorem}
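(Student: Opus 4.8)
The plan is to reverse, in the cocoercive setting, the reduction carried out in the paragraphs just above: one integration turns the second-order dynamic into a first-order perturbation of the cocoercive flow \eqref{eq:coer_10}, Theorem~\ref{thm:cocoercive} supplies the asymptotics of the latter, and the announced rates follow by undoing the time rescaling $\tau(s) = \frac{s^{2}}{2(\alpha+1)}$. This parallels the gradient computation in Theorem~\ref{Thm-rescale-averaging}, with $M$ in place of $\nabla f$.

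First I would fix a solution trajectory $y$ with Cauchy data $y(s_{0}) = y_{0}$, $\dot{y}(s_{0}) = y_{1}$. Multiplying the dynamic by $s^{\alpha} > 0$ and recognizing the left-hand side as a total derivative gives
\[
\frac{d}{ds}\left( s^{\alpha}\dot{y}(s) + \frac{s^{\alpha+1}}{\alpha+1}M(y(s)) \right) = 0 ,
\]
hence, after integration and division by $s^{\alpha}$,
\[
\dot{y}(s) + \frac{s}{\alpha+1}M(y(s)) = \frac{c_{0}}{s^{\alpha}} ,
\]
with $c_{0} = s_{0}^{\alpha} y_{1} + \frac{s_{0}^{\alpha+1}}{\alpha+1} M(y_{0}) \in \cH$ fixed by the initial data. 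Multiplying by $\frac{\alpha+1}{s}$ and setting $z(t) := y(\sigma(t))$ with $\sigma(t) = \sqrt{2(\alpha+1)t}$ --- so that $\dot{z}(t) = \dot{\sigma}(t)\dot{y}(\sigma(t))$ and $\sigma(t)\dot{\sigma}(t) = \alpha+1$ --- turns this into
\[
\dot{z}(t) + M(z(t)) = \frac{c}{t^{(\alpha+1)/2}}
\]
for a constant $c \in \cH$, which is exactly \eqref{eq:coer_10} with $g(t) = c\, t^{-(\alpha+1)/2}$.

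Next I would check the hypotheses \eqref{oppert 01}: since $\alpha > 1$ one has $\frac{\alpha+1}{2} > 1$, so $\int_{t_{0}}^{+\infty} \norm{g(t)}\, dt < +\infty$, and $\int_{t_{0}}^{+\infty} t\norm{g(t)}^{2}\, dt = \norm{c}^{2}\int_{t_{0}}^{+\infty} t^{-\alpha}\, dt < +\infty$ as well. Theorem~\ref{thm:cocoercive} then yields $\norm{M(z(t))} = o(1/\sqrt{t})$, $\int_{t_{0}}^{+\infty}\norm{M(z(t))}^{2}\, dt < +\infty$, and weak convergence of $z(t)$ to some $z_{*} \in \Zer M$. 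Transferring back through $t = \tau(s) = \frac{s^{2}}{2(\alpha+1)}$, i.e. $y(s) = z(\tau(s))$: since $\tau(s) \to +\infty$, the trajectory $y(s)$ converges weakly to $z_{*} \in \Zer M$, giving (iii); the identity $\norm{M(y(s))} = \norm{M(z(\tau(s)))} = o(1/\sqrt{\tau(s)}) = o(1/s)$ gives (i); and the substitution $t = \tau(s)$, $dt = \dot{\tau}(s)\, ds = \frac{s}{\alpha+1}\, ds$ gives $\int_{s_{0}}^{+\infty} s\norm{M(y(s))}^{2}\, ds = (\alpha+1)\int_{\tau(s_{0})}^{+\infty}\norm{M(z(t))}^{2}\, dt < +\infty$, which is (ii).

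I do not expect a genuine obstacle: the argument is bookkeeping parallel to the gradient case, and every integrability requirement collapses to the standing hypothesis $\alpha > 1$. The only points deserving care are that the reparametrization $\sigma$ is smooth on $\,]t_{0}, +\infty[\,$, so that the chain rule and the manipulation of $\frac{d}{ds}(M(y(s)))$ are licit at the regularity of the solution trajectory, and the correct handling of the Jacobian factor $\dot{\tau}$ when pushing the integral estimate from $t$ back to $s$.
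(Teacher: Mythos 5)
Your proposal is correct and follows essentially the same route as the paper: the argument there is carried out in the text preceding the theorem (multiplying \eqref{op 08} by $s^{\alpha}$, integrating to get \eqref{op 07}, rescaling via \eqref{op 077} to land on the perturbed flow \eqref{eq:coer_10} with $g(t)=c\,t^{-(\alpha+1)/2}$ satisfying \eqref{oppert 01}, and then invoking Theorem \ref{thm:cocoercive}). Your transfer of the three conclusions back through $t=\tau(s)$, including the Jacobian factor $\dot{\tau}(s)=\frac{s}{\alpha+1}$ in the integral estimate, matches the paper's intended combination of Theorem \ref{thm:cocoercive} with the time scaling.
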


\begin{remark}
Theorem \ref{thm:cocoer_1} is valid for an arbitrary cocoercive operator. By contrast, the corresponding result without the Newton correction term requires that the coefficient of the cocoercive operator asymptotically tends to infinity, see Attouch and Peypouquet \cite{AP-max}.
\end{remark}

\begin{remark}
Recently, in \cite{BCN} the  closely related dynamics
\begin{equation}\label{variant_BCN}
	\ddot{y} \left( s \right) + \dfrac{\alpha}{s} \dot{y} \left( s \right) + \dfrac{2s}{\alpha+1} \dfrac{d}{ds} \left( M \left( y \left( s \right) \right) \right) + M \left( y \left( s \right) \right) = 0
\end{equation}
has been considered. Compared to \eqref{op 08}, the  difference lies in the coefficient of $\frac{d}{ds} \left( M \left( y \left( s \right) \right) \right)$.
According to \cite[Theorem 4 and Theorem 7]{BCN}, the solution trajectory of \eqref{variant_BCN} verifies
$$
\int_{t_0}^{+\infty}  s^{3} \left\lVert M \left( y \left( s \right) \right) \right\rVert ^{2} ds < + \infty, \, \mbox{ and } \, \left\lVert M \left( y \left( s \right) \right) \right\rVert = o \left( \dfrac{1}{s^{2}} \right) \textrm{ as } s \to + \infty .
$$
It would be interesting to know if we can achieve the same convergence rate using time scaling and perturbation techniques.
\end{remark}

\subsection{Additively structured monotone inclusions}

Our interest now is to solve the structured monotone inclusion problem 
\begin{equation*}
0 \in A(x)+B(x), 
\end{equation*}
where $A$ is maximally monotone, and $B$ is cocoercive with $(A+B)^{-1}(0) \neq \emptyset$. 
A favorable situation occurs when one can compute the resolvent operator of $A$
\[
J_{\mu A} = (I + \mu A )^{-1},  \quad \mu > 0 .
\]
In this case, we can develop a strategy parallel to the one which consists in replacing a maximally monotone operator by its Yosida approximation. 
Indeed, given $\mu>0$, we have
\begin{equation}\label{eq.zero_oper_1}
(A+B) (x) \ni 0 \iff x- J_{\mu A}( x-\mu B(x)) =0\iff M_{A,B,\mu}(x) =0,
\end{equation}
where $M_{A,B,\mu}: \cH \to \cH$ is the single-valued operator defined by 
\begin{equation}\label{Max_Mon_structured}
M_{A,B,\mu}(x) = \frac{1}{\mu} \pa{x- J_{\mu A}( x-\mu B(x))}.
\end{equation}

{The operator} $M_{A,B,\mu}$ is closely tied to the well-known forward-backward fixed point operator. Moreover, when $B=0$, $M_{A,B,\mu}=\frac{1}{\mu} \pa{I - J_{\mu A}}$ which is nothing but the Yosida regularization of $A$ with index $\mu$. As a remarkable property, for the paremeter $\mu$ properly set, the operator $M_{A,B,\mu}$ is cocoercive. This is made precise in the following result,  whose proof relies on the relation between cocoercivity and averaged property.

\begin{proposition} {\rm (\cite[Proposition 4.4]{CY})} \label{cocoer}  Let $A: \cH \to 2^{\cH}$ be a general maximally monotone operator, and  $B:\cH \to \cH $  a monotone operator which is $\lambda$-cocoercive. Assume that $\mu  \in ]0, 2\lambda[ $. Then
$M_{A,B,\mu}$ is $\rho$-cocoercive with 
\vspace{-10pt}
\begin{equation*}
	\rho =\mu\pa{1-\frac{\mu}{4\lambda}}.
\end{equation*}
\end{proposition}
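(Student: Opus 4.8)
\emph{Proof plan.} The plan is to exploit the classical equivalence between cocoercivity of $M_{A,B,\mu}$ and averagedness of the underlying forward--backward operator. Recall that $T\colon\cH\to\cH$ is said to be $\kappa$-averaged (with $\kappa\in(0,1)$) if $T=(1-\kappa)I+\kappa R$ for some nonexpansive $R\colon\cH\to\cH$. Setting $T\eqdef J_{\mu A}\circ\pa{I-\mu B}$, we have $M_{A,B,\mu}=\frac1\mu\pa{I-T}$. Expanding $\norm{Rx-Ry}^2\le\norm{x-y}^2$ with the identity $R-I=\frac1\kappa(T-I)$ shows that $T$ is $\kappa$-averaged if and only if $I-T$ is $\frac{1}{2\kappa}$-cocoercive, which in turn is equivalent to $M_{A,B,\mu}=\frac1\mu(I-T)$ being $\frac{\mu}{2\kappa}$-cocoercive. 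Hence it suffices to prove that $T$ is $\kappa$-averaged with $\kappa=\frac{2\lambda}{4\lambda-\mu}$, since then $\frac{\mu}{2\kappa}=\mu\pa{1-\frac{\mu}{4\lambda}}=\rho$.

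Next I would compute the averagedness constants of the two factors of $T$. First, for any maximally monotone $A$ the resolvent $J_{\mu A}$ is firmly nonexpansive, i.e. $\demi$-averaged. Second, since $B$ is $\lambda$-cocoercive, $\lambda B$ is firmly nonexpansive, so $\lambda B=\demi\pa{I+Q}$ with $Q$ nonexpansive; consequently
$$
I-\mu B=\pa{1-\frac{\mu}{2\lambda}}I+\frac{\mu}{2\lambda}(-Q),
$$
which exhibits $I-\mu B$ as $\frac{\mu}{2\lambda}$-averaged, the hypothesis $\mu\in\,]0,2\lambda[$ being exactly what guarantees $\frac{\mu}{2\lambda}\in\,]0,1[$.

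Finally I would invoke the composition rule for averaged operators: the composition of an $\alpha_1$-averaged operator with an $\alpha_2$-averaged operator is $\kappa$-averaged with $\kappa=\dfrac{\alpha_1+\alpha_2-2\alpha_1\alpha_2}{1-\alpha_1\alpha_2}$. With $\alpha_1=\demi$ and $\alpha_2=\frac{\mu}{2\lambda}$ this yields $\kappa=\dfrac{1/2}{1-\frac{\mu}{4\lambda}}=\frac{2\lambda}{4\lambda-\mu}$, and the first paragraph then gives that $M_{A,B,\mu}$ is $\frac{\mu}{2\kappa}=\mu\pa{1-\frac{\mu}{4\lambda}}$-cocoercive, which is the claim. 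The only genuinely delicate point is the bookkeeping of the constants through these equivalences and through the composition formula; alternatively one can bypass the composition lemma and argue directly, putting $u\eqdef x-\mu B(x)$ and $v\eqdef y-\mu B(y)$, combining the firm nonexpansiveness inequality $\norm{J_{\mu A}u-J_{\mu A}v}^2\le\dotp{u-v}{J_{\mu A}u-J_{\mu A}v}$ with the $\lambda$-cocoercivity of $B$ and rearranging, but the averagedness route makes the origin of the constant $\rho=\mu\pa{1-\frac{\mu}{4\lambda}}$ most transparent.
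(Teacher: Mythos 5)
Your proof is correct, and it follows exactly the route the paper indicates for this result (which it cites from \cite[Proposition 4.4]{CY} without reproducing the argument), namely the equivalence between cocoercivity of $\frac{1}{\mu}(I-T)$ and averagedness of the forward--backward operator $T=J_{\mu A}\circ(I-\mu B)$, combined with the Combettes--Yamada composition formula for averaged operators. The constants all check out: $\kappa=\frac{2\lambda}{4\lambda-\mu}\in\,]0,1[$ precisely when $\mu\in\,]0,2\lambda[$, and $\frac{\mu}{2\kappa}=\mu\pa{1-\frac{\mu}{4\lambda}}=\rho$.
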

\noindent Therefore, we can develop a strategy parallel to that developed in \cite{AP-max},  which consists in replacing a maximally monotone operator by its Yosida approximation. 
This leads to the inertial dynamics
\begin{equation}\label{eq:cocoer_composite}
\ddot{y} \left( s \right) + \dfrac{\alpha}{s} \dot{y} \left( s \right) + \dfrac{s}{\alpha + 1} \dfrac{d}{ds} \left( M_{A,B,\mu}\left( y \left( s \right) \right) \right) + M_{A,B,\mu} \left( y \left( s \right) \right) = 0 .
\end{equation}
It is immediate to apply Theorem \ref{thm:cocoer_1} to \eqref{eq:cocoer_composite}. This open new perspectives for forward-backward accelerated regularized Newton methods.

Let us illustrate our results by considering the following composite optimization problem
\begin{equation*}
\min_{y \in \R^{n}} \left\lbrace f \left( y \right) := \dfrac{1}{2} \left\lVert Ay - b \right\rVert ^{2} + g \left( y \right) \right\rbrace ,
\end{equation*}
where $A$ is a linear operator from $\R^{n}$ to $\R^{m}$, $g \colon \R^{n} \to \R \cup \left\lbrace + \infty \right\rbrace$ is a proper, convex and lower semicontinuous function which acts as a regularizer. This class of problems covers many interesting situations arising in the signal and image processing such as the LASSO problem. We are in the setting of \eqref{Max_Mon_structured} with
$$
M_{A, B, \mu} \left( y \right) := \dfrac{1}{\mu} \left( y - \prox_{\mu g} \left( y + \mu A^{*} \left( b - Ay \right) \right) \right) ,
$$
where $0 < \mu < \frac{1}{\left\lVert A \right\rVert ^{2}}$.
Following \cite{ACFR}, we can also interpret our problem as minimizing the Moreau envelope of $f$ computed for an ad hoc metric. Precisely, if $W := \frac{1}{\mu} \mathbb{I} - A^{*}A$ and $f_{W} \left( y \right) := \argmin_{u \in \R^{n}} \left\lbrace f \left( u \right) + \frac{1}{2} \left\lVert y - u \right\rVert _{W}^{2} \right\rbrace$, then $\nabla f_{W} \left( y \right) = M_{A, B, \mu} \left( y \right)$ and $\inf_{\cH} f = \inf_{\cH} f_{W}$.
So, we can use this example to illustrate  also the convergence properties of the dynamic with explicit Hessian damping  considered in Subsection \ref{sec:explicit}. Thus, we can  expect a convergence rate of $1/s^{2}$ for $f \left( y_{W}(s) \right) - \inf_{\cH} f$, where $y_{W} \left( s \right) := \prox_{\mu g} \left( y \left( s \right) + \mu A^{*} \left( b - Ay \left( s \right) \right) \right)$.

We conduct the experiment for a randomly generated matrix $A$ and by taking $g$ to be the $\ell_{1}$ norm with various choices for $\alpha \in \left\lbrace 1.001, 2, 3 ,5 \right\rbrace$.  Notice that we do not need need to solve \eqref{eq:cocoer_composite} directly but rather exploit the fact that it can be rewritten as the first order equation
$$
\dot{y} \left( s \right) + \dfrac{s}{\alpha+1} M_{A, B, \mu} \left( y \left( s \right) \right) = \dfrac{s_{0}^{\alpha}}{s^{\alpha}} \left( y_{1} + \dfrac{s_{0}}{\alpha+1} y_{0} \right) ,
$$
as shown previously. We then use the Runge-Kutta method to solve this equation numerically.
Below we plot for the different choices of $\alpha$ the values of $f$ at $y_{W} \left( s \right)$ and of the norm of the operator at $y \left( s \right)$, respectively.
\begin{figure}[!htb]
\minipage{0.48\textwidth}
\includegraphics[width=\linewidth]{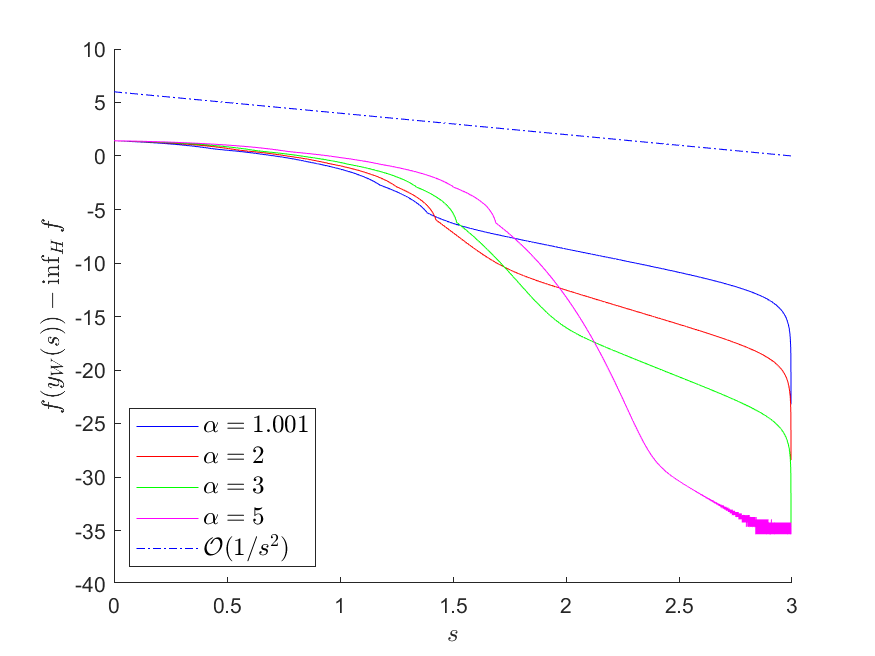}
\caption{Convergence of funtion values in logarithmic scale.}
\label{fig:proxgrad_logval}
\endminipage\hfill
\minipage{0.48\textwidth}
\includegraphics[width=\linewidth]{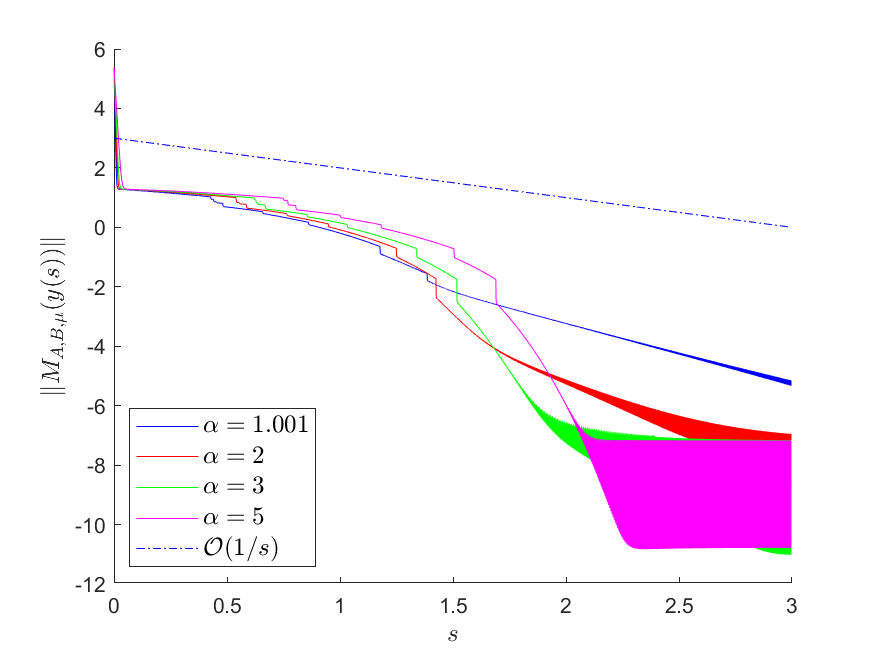}
\caption{Convergence of operator values in logarithmic scale.}
\label{fig:proxgrad_loggrad}
\endminipage
\end{figure}

\section{Conclusion and perspectives}

Our study has shed new light on the acceleration of gradient methods for general convex differentiable optimization.
Based on a dynamical approach, and using relatively elementary tools, namely time scaling and time averaging, we were able to introduce a large class of damped inertial dynamics with proven fast convergence rates.
The method is very flexible and can be adapted to various initial start-up dynamics. We paid particular attention to the steepest continuous descent and the regularized Newton method. Our approach confirms the important role played by the Hessian driven damping role in these questions.
Our results open the door to a systematic study of the corresponding fast proximal and gradient methods.
Importantly, our approach also works when we take a cocoercive operator instead of a gradient operator. Applications to inertial forward-backward methods, ADMM, can therefore be expected, to cite some of the most popular algorithms for convex structured optimization.
Application to stochastic differential equations can be considered as well.

\section{Appendix}

\subsection{Auxiliary results}

Opial's lemma in continuous form  is a basic ingredient of the asymptotic analysis of dynamical systems.
\begin{lemma}\label{Opial} {\rm (Opial)} Let $S$ be a nonempty subset of $\mathcal H$ and let $x:[t_0,+\infty[\to \mathcal H$. Assume that 
\begin{itemize}
	\item [(i)] for every $z\in S$, $\lim_{t\to\infty}\|x(t)-z\|$ exists;
	\item [(ii)] every weak sequential limit point of $x(t)$, as $t\to\infty$, belongs to $S$.
\end{itemize}
Then $x(t)$ converges weakly as $t\to +\infty$, and its limit belongs to $S$.
\end{lemma}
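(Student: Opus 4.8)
The plan is to run the classical two-cluster-point argument. First I would observe that $S$ is nonempty, so one may fix some $z\in S$; by hypothesis (i) the function $t\mapsto\|x(t)-z\|$ has a finite limit as $t\to+\infty$, hence is bounded, so the trajectory $x(\cdot)$ is bounded in $\mathcal H$. Since bounded sets in a Hilbert space are weakly sequentially compact, from any sequence $t_n\to+\infty$ one can extract a weakly convergent subsequence; in particular the set of weak sequential cluster points of $x(t)$ as $t\to+\infty$ is nonempty.

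The heart of the proof is showing this set is a singleton. Let $x_1$ and $x_2$ be two weak sequential cluster points, say $x(t_n)\rightharpoonup x_1$ and $x(s_n)\rightharpoonup x_2$ with $t_n,s_n\to+\infty$. By (ii) we have $x_1,x_2\in S$, so by (i) both $\lim_{t\to+\infty}\|x(t)-x_1\|$ and $\lim_{t\to+\infty}\|x(t)-x_2\|$ exist. Expanding the squared norms,
\[
\|x(t)-x_1\|^2-\|x(t)-x_2\|^2=\|x_1\|^2-\|x_2\|^2-2\langle x(t),\,x_1-x_2\rangle ,
\]
the left-hand side converges as $t\to+\infty$, hence so does $\langle x(t),\,x_1-x_2\rangle$; denote its limit by $\ell$. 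Passing to the limit along $(t_n)$ gives $\langle x_1,\,x_1-x_2\rangle=\ell$, and along $(s_n)$ gives $\langle x_2,\,x_1-x_2\rangle=\ell$. Subtracting yields $\|x_1-x_2\|^2=0$, i.e. $x_1=x_2$. Thus $x(t)$ has a unique weak sequential cluster point $x_\infty$, which lies in $S$.

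Finally I would upgrade ``unique weak cluster point'' to genuine weak convergence: if $x(t)\not\rightharpoonup x_\infty$, there exist $v\in\mathcal H$, $\varepsilon>0$ and $t_n\to+\infty$ with $|\langle x(t_n)-x_\infty,\,v\rangle|\ge\varepsilon$; boundedness of $(x(t_n))$ then furnishes a weakly convergent subsequence whose limit is a weak cluster point of $x(\cdot)$ not equal to $x_\infty$, contradicting uniqueness. Hence $x(t)\rightharpoonup x_\infty\in S$, which is the claim. The only mildly delicate step is this last compactness-and-contradiction passage; everything else is elementary Hilbert-space algebra. Since Opial's lemma is entirely classical, in the paper one could simply invoke the standard reference, but the argument above is self-contained.
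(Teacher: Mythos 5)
Your argument is correct and is the standard proof of Opial's lemma: boundedness from (i), weak sequential compactness, the quadratic-expansion trick to show any two weak cluster points coincide, and the compactness-plus-uniqueness step to upgrade to genuine weak convergence. The paper itself states this lemma in the Appendix without proof, treating it as a classical tool, so there is nothing to compare against; your self-contained derivation is a valid (and complete) substitute for the citation.
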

In the analysis of the perturbed dynamical systems Gronwall-Bellman's lemma (see \cite[Lemme A.5]{breziseqevo}) plays an important role.
\begin{lemma}\label{lem GB}
Let {$T \geq \delta \geq 0$ and} $g \colon \left[ \delta , T \right] \to \mathbb{R}^{+}$ be integrable, and let $c \geq 0$. Suppose that $h \colon \left[ \delta , T \right] \to \mathbb{R}$ is continuous and
\vspace*{-10pt}
\begin{equation*}
	\dfrac{1}{2} h^{2} \left( t \right) \leq \dfrac{1}{2} c^{2} + \int_{\delta}^{t} h \left( t \right) g \left( t \right) dt 
\end{equation*}
for every $t \in \left[ \delta , T \right]$. Then $\left\lvert h \left( t \right) \right\rvert \leq c + \int_{\delta}^{t} g \left( u \right) du$ for every $t \in \left[ \delta , T \right]$.
\end{lemma}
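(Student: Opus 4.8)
The plan is to reduce the implicit integral inequality to an elementary first-order differential inequality for the square root of the accumulated integral; this is the classical Gronwall--Bellman argument. First I would introduce
\[
\phi(t) := \demi c^{2} + \int_{\delta}^{t} h(u) g(u)\, du , \qquad t \in [\delta, T].
\]
Since $h$ is continuous on the compact interval $[\delta, T]$ it is bounded there and $g$ is integrable, so $hg \in \mathbb{L}^{1}([\delta,T])$; hence $\phi$ is absolutely continuous with $\dot\phi(t) = h(t) g(t)$ for almost every $t \in [\delta, T]$. The standing hypothesis is precisely $\demi h^{2}(t) \le \phi(t)$, which forces $\phi(t) \ge 0$ and, crucially, $\abs{h(t)} \le \sqrt{2\phi(t)}$ for every $t \in [\delta, T]$.

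The second step passes to a differential inequality. I would fix $\eps > 0$ so as to keep the argument of the square root bounded away from $0$, and use $g \ge 0$ together with the pointwise bound on $\abs{h}$ to get, for almost every $t$,
\[
\dot\phi(t) = h(t) g(t) \le \abs{h(t)}\, g(t) \le \sqrt{2\bpa{\phi(t) + \eps}}\; g(t) .
\]
Because $x \mapsto \sqrt{2(x+\eps)}$ is $\cC^{1}$ on $(-\eps, +\infty)$, which contains the (compact) range of $\phi$, the composition $t \mapsto \sqrt{2(\phi(t)+\eps)}$ is absolutely continuous, and the chain rule yields, for almost every $t$,
\[
\frac{d}{dt}\sqrt{2\bpa{\phi(t)+\eps}} = \frac{\dot\phi(t)}{\sqrt{2(\phi(t)+\eps)}} \le g(t) .
\]

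Finally I would integrate this from $\delta$ to $t$, using $\phi(\delta) = \demi c^{2}$, to obtain
\[
\sqrt{2\bpa{\phi(t)+\eps}} \le \sqrt{c^{2} + 2\eps} + \int_{\delta}^{t} g(u)\, du ,
\]
then combine with $\abs{h(t)} \le \sqrt{2\phi(t)} \le \sqrt{2(\phi(t)+\eps)}$ and let $\eps \downarrow 0$ to conclude $\abs{h(t)} \le c + \int_{\delta}^{t} g(u)\, du$ for every $t \in [\delta, T]$.

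The one genuinely delicate point is the $\eps$-regularization: without it one would divide by $\sqrt{\phi(t)}$, which may vanish (for instance when $c = 0$, or if the integral term exactly cancels $\demi c^{2}$), so the square-root differential inequality must be derived for the strictly positive quantity $\phi + \eps$ and the limit $\eps \downarrow 0$ taken only at the very end. Everything else is routine manipulation of absolutely continuous functions, and no compactness or separability beyond what is already present is needed.
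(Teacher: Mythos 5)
Your proof is correct: the $\varepsilon$-regularization properly handles the possible vanishing of $\phi$, the chain rule for a Lipschitz (on the relevant compact range) function composed with an absolutely continuous one is applied legitimately, and the limit $\varepsilon \downarrow 0$ is taken only after the integrated estimate is in hand. The paper does not prove this lemma itself but cites it from Brezis (Lemme A.5), and your argument is essentially the classical one from that reference, so there is nothing to add.
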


\begin{lemma}\label{lem o(1/t)}
Let {$\delta \geq 0$ and} $h \colon \left[ \delta , + \infty \right] \to \R^{+}$ be a nonincreasing function  belonging to $\mathbb{L}^{1} \left( \left[ \delta , + \infty \right] \right)$. It  holds $\lim_{t \to + \infty} th \left( t \right) = 0$.
\end{lemma}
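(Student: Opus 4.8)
The plan is to convert the pointwise quantity $t h(t)$ into a tail integral of $h$ by exploiting monotonicity, and then invoke integrability. First I would restrict attention to $t$ large enough that $t/2 \geq \delta$. Since $h$ is nonincreasing and nonnegative, for every $u \in [t/2, t]$ we have $h(u) \geq h(t) \geq 0$; integrating this inequality over the interval $[t/2, t]$, whose length is $t/2$, yields
\begin{equation*}
0 \leq \frac{t}{2} h(t) \leq \int_{t/2}^{t} h(u)\, du .
\end{equation*}

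Next I would use the hypothesis $h \in \mathbb{L}^{1}([\delta, +\infty[)$: the map $t \mapsto \int_{\delta}^{t} h(u)\, du$ is nondecreasing and bounded above, hence converges to the finite limit $\ell := \int_{\delta}^{+\infty} h(u)\, du$ as $t \to +\infty$. Therefore $\int_{t/2}^{t} h(u)\, du = \int_{\delta}^{t} h(u)\, du - \int_{\delta}^{t/2} h(u)\, du \to \ell - \ell = 0$ as $t \to +\infty$. Squeezing in the displayed inequality then gives $\frac{t}{2} h(t) \to 0$, that is $t h(t) \to 0$, which is the claim.

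This argument is entirely elementary; no Lyapunov function or perturbation estimate is needed. The only point requiring a word of care is the legitimacy of splitting the integral, i.e. that $t/2 \geq \delta$ for $t$ large, which is immediate. An equivalent route, if one prefers, is by contradiction: if $t h(t) \not\to 0$ there would be $c>0$ and $t_n \to +\infty$ with $t_n h(t_n) \geq c$, and by monotonicity $h \geq c/t_n$ on $[t_n/2, t_n]$, forcing $\int_{t_n/2}^{t_n} h \geq c/2$ along a sequence $t_n \to +\infty$, contradicting the Cauchy criterion for the convergent integral $\int_{\delta}^{+\infty} h$. I do not anticipate any genuine obstacle here.
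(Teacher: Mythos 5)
Your proof is correct, and it takes a genuinely different route from the paper. The paper differentiates the product, observes that the nonincreasing property of $h$ gives $\frac{d}{dt}\left( t h(t) \right) = h(t) + t\dot{h}(t) \leq h(t)$, and then, since the majorant $h$ is integrable, concludes by invoking an external convergence lemma (\cite[Lemma 5.2]{AAS}, which yields existence of the limit of $t h(t)$, combined with the fact that integrability of $h$ forces $\liminf_{t \to +\infty} t h(t) = 0$). Your argument instead squeezes $\frac{t}{2} h(t)$ between $0$ and the tail integral $\int_{t/2}^{t} h(u)\, du$, which vanishes by the Cauchy criterion for the convergent integral. What your approach buys is self-containedness and robustness: it uses only monotonicity and integrability, with no appeal to an outside lemma, and it sidesteps the regularity issue implicit in the paper's computation --- a nonincreasing function is differentiable almost everywhere but need not be absolutely continuous, so passing from the pointwise derivative bound to a conclusion about $t h(t)$ strictly requires an extra word of justification that the paper omits (one can supply it directly, e.g.\ via $t h(t) - s h(s) \leq \int_{s}^{t} h(u)\, du$ for $s \leq t$, which follows from monotonicity alone). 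What the paper's approach buys is uniformity with the rest of its analysis, where the same differentiate-and-majorize pattern and the same lemma from \cite{AAS} are used repeatedly. Both proofs are valid; yours is the more elementary.
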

\begin{proof}
The nonincreasing property of $h$ implies that $\frac{d}{dt} \left( th \left( t \right) \right) = h \left( t \right) + t \dot{h} \left( t \right) \leq h \left( t \right)$ for every $t \geq t_0$.  Since $h \in \mathbb{L}^{1} \left( \left[ \delta , + \infty \right] \right)$, the result follows from \cite[Lemma 5.2]{AAS}.
\end{proof}

\begin{lemma}\label{lem lim-0}
Let $\delta \geq 0$ and $a \colon \left[ \delta , + \infty \right] \to \R^{+}$ be a positive real valued function such that $\lim_{s \to + \infty} a(s)=0$.  Then $\lim_{s \to + \infty} A(s)=0$, where for $p>0$
$$
A(s) =   \frac{1}{s^{p}}\int_{s_{0}}^s u^{p-1} a(u)du.
$$
\end{lemma}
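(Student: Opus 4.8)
The plan is to prove this via a routine Ces\`aro-type splitting, exploiting that the weight $u^{p-1}$ has total mass $\int_{s_0}^{s} u^{p-1}\,du = (s^{p}-s_0^{p})/p$ on $[s_0,s]$, which is matched (up to the factor $1/p$ and a vanishing correction) by the normalising factor $s^{-p}$ in the definition of $A(s)$. Thus $A(s)$ is, in essence, a weighted average of $a$ over $[s_0,s]$, and the average of a function converging to $0$ must converge to $0$.

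Concretely, I would fix $\varepsilon>0$ and use $a(u)\to 0$ to choose $s_1\geq s_0$ with $0\leq a(u)\leq\varepsilon$ for all $u\geq s_1$. Splitting the integral at $s_1$, for $s\geq s_1$ one writes
\[
A(s) = \frac{1}{s^{p}}\int_{s_0}^{s_1} u^{p-1} a(u)\,du \;+\; \frac{1}{s^{p}}\int_{s_1}^{s} u^{p-1} a(u)\,du .
\]
The second term is bounded, using $a(u)\leq\varepsilon$, by $\frac{\varepsilon}{s^{p}}\cdot\frac{s^{p}-s_1^{p}}{p}\leq\frac{\varepsilon}{p}$. The first term equals $C_{\varepsilon}\,s^{-p}$ with $C_{\varepsilon}:=\int_{s_0}^{s_1} u^{p-1} a(u)\,du$ a finite constant, hence it tends to $0$ as $s\to+\infty$; picking $s_2\geq s_1$ so that $C_{\varepsilon}\,s^{-p}\leq\varepsilon$ for $s\geq s_2$ gives $0\leq A(s)\leq\varepsilon\bigl(1+\tfrac{1}{p}\bigr)$ for all $s\geq s_2$. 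Therefore $\limsup_{s\to+\infty}A(s)\leq\varepsilon\bigl(1+\tfrac{1}{p}\bigr)$, and since $\varepsilon>0$ is arbitrary and $A(s)\geq 0$ (the integrand is nonnegative), we conclude $\lim_{s\to+\infty}A(s)=0$.

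There is no genuine obstacle in this argument; the only point deserving a remark is that the finiteness of $C_{\varepsilon}$ requires $a$ to be integrable on bounded subintervals of $[s_0,+\infty[$, which is tacitly assumed in the statement and is satisfied in every application of the lemma in the paper (where $a$ is built from continuous quantities such as $f(y(\cdot))-\inf_{\cH}f$ or $\|\dot y(\cdot)\|$). If desired, one can make the statement self-contained by explicitly assuming local integrability of $a$.
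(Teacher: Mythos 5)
Your proof is correct and follows essentially the same route as the paper: split the integral at a threshold beyond which $a(u)\leq\varepsilon$, bound the tail term by a constant multiple of $\varepsilon$, and let the head term vanish as $s\to+\infty$. Your remark on local integrability of $a$ is a fair observation but does not change the substance of the argument.
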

\begin{proof}
Given $\epsilon >0$,  let $T_{\epsilon}$ such that $s_{0} <T_{\epsilon} $ and $
a(u) \leq \epsilon$ for $t \geq T_{\epsilon}$.
For $s > T_{\epsilon}$, we have
\begin{eqnarray*}
	A(s) = \frac{1}{s^{p}}\int_{s_{0}}^{T_{\epsilon}} u^{p-1} a(u)du + \frac{1}{s^{p}}\int_{T_{\epsilon}}^s u^{p-1} a(u)du \leq  \frac{1}{s^{p}}\int_{s_{0}}^{T_{\epsilon}} u^{p-1} a(u)du + \epsilon .
\end{eqnarray*}
Letting $s$ converge to $+\infty$,  it yields
\begin{equation*}
	\limsup_{s\to +\infty } A(s) \leq \epsilon.
\end{equation*}
This being true for any $\epsilon >0$, we infer that $\lim_{s \to + \infty} A(s)=0$, which gives the claim. 
\end{proof}

\subsection{Missing proofs}

\begin{proof}[Proof of Theorem \ref{theorem SD pert}]
For sake of completeness, let us recall some of the arguments used in the asymptotic analysis of the perturbed steepest descent system when the perturbation $g$ satisfies \eqref{pert 01}.
Given $z_{*} \in S$, let $T > t_{0}$ be fixed and for every $T \geq t \geq t_0$ consider
\vspace*{-10pt}
\begin{equation*}
	\cE_{T} \left( t \right) := t \left( f \left( z \left( t \right) \right) - \inf_{\cH} f \right) + \dfrac{1}{2} \left\lVert z \left( t \right) - z_{*} \right\rVert ^{2} + \int_{t}^{T} \left\langle z \left( u \right) - z_{*} + \tau \dot{z} \left( u \right) , g \left( u \right) \right\rangle du .
\end{equation*}
Differentiating  $\cE_{T}$ gives for every $T \geq t \geq t_0$ 
\begin{align*}
	\dfrac{d}{dt} \cE_{T} \left( t \right) 
	& = f \left( z \left( t \right) \right) - \inf_{\cH} f + t \left\langle \nabla f \left( z \left( t \right) \right) , \dot{z} \left( t \right) \right\rangle + \left\langle z \left( t \right) - z_{*} , \dot{z} \left( t \right) - g \left( t \right) \right\rangle - t \left\langle \dot{z} \left( t \right) , g \left( t \right) \right\rangle \nonumber \\
	& = f \left( z \left( t \right) \right) - \inf_{\cH} f - t \left\lVert \dot{z} \left( t \right) \right\rVert ^{2} - \left\langle z \left( t \right) - z_{*} , \nabla f \left( z \left( t \right) \right) \right\rangle \nonumber \\
	& \leq - t \left\lVert \dot{z} \left( t \right) \right\rVert ^{2} , 
\end{align*}
where the second equality comes from \eqref{pert SD} and the last inequality follows from the convexity of $f$. 
By integration from $t_{0}$ to $t$ and by denoting ${C_3}:=t_0 \left( f \left( z \left( t_0 \right) \right) - \inf_{\cH} f \right) + \dfrac{1}{2} \left\lVert z \left( t_o \right) - z_{*} \right\rVert ^{2} \geq 0$  we deduce that 
\begin{align}
	& t \left( f \left( z \left( t \right) \right) - \inf_{\cH} f \right) + \dfrac{1}{2} \left\lVert z \left( t \right) - z_{*} \right\rVert ^{2} \nonumber \\
	\leq \ 	& {C_3} + \int_{t_{0}}^{t} \left\langle z \left( u \right) - z_{*} , g \left( u \right) \right\rangle du + \int_{t_{0}}^{t} \tau \left\langle \dot{z} \left( u \right) , g \left( u \right) \right\rangle du - \int_{t_{0}}^{t} \tau \left\lVert \dot{z} \left( u \right) \right\rVert ^{2} du \nonumber \\
	\leq \ 	& {C_3} + \int_{t_{0}}^{t} \left\lVert z \left( u \right) - z_{*} \right\rVert \left\lVert g \left( u \right) \right\rVert du + \dfrac{1}{2} \int_{t_{0}}^{+ \infty} \tau \left\lVert g \left( u \right) \right\rVert ^{2} du - \dfrac{1}{2} \int_{t_{0}}^{t} \tau \left\lVert \dot{z} \left( u \right) \right\rVert ^{2} du \label{SD_Lyap_1}.
\end{align}
We obtain that there exists ${C_4} \geq 0$ such that the following estimate is true for every  $t \geq t_{0}$
\vspace*{-5pt}
\begin{equation*}
	\dfrac{1}{2} \left\lVert z \left( t \right) - z_{*} \right\rVert ^{2} \leq {C_4} + \int_{t_{0}}^{t} \left\lVert z \left( u \right) - z_{*} \right\rVert \left\lVert g \left( u \right) \right\rVert du .
\end{equation*}
According to Gronwall's lemma (see Lemma \ref{lem GB} in the Appendix), we conclude that for every $t \geq t_{0}$
$$
\left\lVert z \left(t \right) - z_{*} \right\rVert \leq \sqrt{2 {C_4}} + \int_{t_{0}}^{t} \left\lVert g \left( u \right) \right\rVert du \leq \sqrt{2 {C_4}} + \int_{t_{0}}^{+ \infty} \left\lVert g \left( u \right) \right\rVert du < + \infty .
$$
The trajectory is therefore bounded. Using this property and condition \eqref{pert 01} allow us to assert from \eqref{SD_Lyap_1} that there exists ${C_5} \geq 0$ such that for every $t \geq t_0$
\begin{equation}\label{SD_Lyap_2}
	t \left( f \left( z \left( t \right) \right) - \inf_{\cH} f \right) + \dfrac{1}{2} \left\lVert z \left( t \right) - z_{*} \right\rVert ^{2} + \dfrac{1}{2} \int_{t_{0}}^{t} \tau \left\lVert \dot{z} \left( u \right) \right\rVert ^{2} du \leq {C_5}.
\end{equation}
%
Since $t \left( f \left( z \left( t \right) \right) - \inf_{\cH} f \right) + \frac{1}{2} \left\lVert z \left( t \right) - z_{*} \right\rVert ^{2} \geq 0$ for every $t \geq t_{0}$,  letting $t$ tend to $+ \infty$ in \eqref{SD_Lyap_2} we deduce that
\begin{equation*}
	\int_{t_0}^{+\infty} t \left\lVert \dot{z} \left( t \right) \right\rVert ^{2} dt < + \infty .
\end{equation*}
According to the constitutive equation \eqref{pert SD} we get {item \ref{SD pert:int-grad}}
\begin{equation*}
	\int_{t_0}^{+\infty} t \left\lVert \nabla f (z(t) \right\rVert ^{2} dt \leq 2\int_{t_0}^{+\infty} t \left\lVert \dot{z} \left( t \right) \right\rVert ^{2} dt + 2\int_{t_{0}}^{+ \infty} t \left\lVert g \left( t \right) \right\rVert ^{2} dt < +\infty.
\end{equation*}
Differentiate the anchor function, it yields for every $t \geq t_{0}$
\begin{align}
	\dfrac{d}{dt} \left( \dfrac{1}{2} \left\lVert z \left( t \right) - z_{*} \right\rVert ^{2} \right) & = \left\langle z \left( t \right) - z_{*} , \dot{z} \left( t \right) \right\rangle \nonumber \\
	& = - \left\langle z \left( t \right) - z_{*} , \nabla f \left( z \left( t \right) \right) \right\rangle - \left\langle z \left( t \right) - z_{*} , g \left( t \right) \right\rangle \nonumber \\
	& \leq - \left( f \left( z \left( t \right) \right) - \inf_{\cH} f \right) + \sup_{{u} \geq t_{0}} \left\lVert z \left( {u} \right) - z_{*} \right\rVert \cdot \left\lVert g \left( t \right) \right\rVert \label{pert 03} \\
	& \leq \sup_{{u} \geq t_{0}} \left\lVert z \left( {u} \right) - z_{*} \right\rVert \cdot \left\lVert g \left( t \right) \right\rVert . \label{pert 04}
\end{align}
Recall that the trajectory $z(\cdot)$ is bounded. Now let us show that in fact
\begin{equation*}
	\lim_{t \to + \infty} t \left( f \left( z \left( t \right) \right) - \inf_{\cH} f \right) = 0 ,
\end{equation*}
meaning that the convergence rate of $f \left( z \left( t \right) \right) - \inf_{\cH} f$ is actually $o \left( 1/t \right)$.  To this end we integrate \eqref{pert 03} from $t_{0}$ to $t>t_{0}$ and let then $t$ converge to $+ \infty$. This yields \ref{SD pert:int-fun}
\begin{equation}\label{pert 044}
	\int_{t_{0}}^{+ \infty} \left( f \left( z \left( t \right) \right) - \inf_{\cH} f \right) dt = \int_{t_{0}}^{+ \infty} \dfrac{1}{t} t \left( f \left( z \left( t \right) \right) - \inf_{\cH} f \right) dt < + \infty
\end{equation}
and thus $\liminf_{t \to + \infty} t \left( f \left( z \left( t \right) \right) - \inf_{\cH} f \right) = 0$. It remains to show that the limit  exists. To this end we compute the time derivative of $t \left( f \left( z \left( t \right) \right) - \inf_{\cH} f \right)$ for every $t \geq t_{0}$
\begin{align*}
	\dfrac{d}{dt} \left( t \left( f \left( z \left( t \right) \right) - \inf_{\cH} f \right) \right) & = f \left( z \left( t \right) \right) - \inf_{\cH} f + t \left\langle \nabla f \left( z \left( t \right) \right) , \dot{z} \left( t \right) \right\rangle \nonumber \\
	& = f \left( z \left( t \right) \right) - \inf_{\cH} f - t \left\lVert \dot{z} \left( t \right) \right\rVert ^{2} - t \left\langle g \left( t \right) , \dot{z} \left( t \right) \right\rangle \leq f \left( z \left( t \right) \right) - \inf_{\cH} f + \dfrac{1}{4} t \left\lVert g \left( t \right) \right\rVert ^{2}
\end{align*}
and apply once again \cite[Lemma 5.1]{AAS}. Statement \ref{SD pert:rate-fun} follows from  assumption \eqref{pert 01} and \eqref{pert 044}.

\noindent
According to  assumption \eqref{pert 01}, we deduce that the right hand side of \eqref{pert 04} belongs to $\mathbb{L}^{1} \left( \left[ t_{0} , + \infty \right[ \right)$. Therefore, from \cite[Lemma 5.1]{AAS} we obtain that $\lim_{t \to + \infty} \left\lVert z \left( t \right) - z_{*} \right\rVert ^{2} \in \mathbb{R}$ exists, and so $\lim_{t \to + \infty} \left\lVert z \left( t \right) - z_{*} \right\rVert \in \mathbb{R}$ does. In other words, the first condition of Opial's lemma (see Lemma \ref{Opial} in the Appendix) is fulfilled. Furthermore, since $\lim_{t \to + \infty} f \left( z \left( t \right) \right) = \inf_{\cH} f$ and $f$ is convex and weakly lower semicontinuous, the second condition of Opial's lemma is also fulfilled. This gives the weak convergence of the trajectory $z(t)$ as $t \to +\infty$ to an element in $S=\argmin f$.

The proof of \ref{SD pert:rate-grad} follows similarly. First, the assertion \ref{SD pert:int-grad} gives $\liminf_{t \to + \infty} t^{2} \left\lVert \nabla f \left( z \left( t \right) \right) \right\rVert ^{2} = 0$.
Let $L$ be the Lipschitz constant of  $\nabla f$ on a ball  containing the trajectory  $z(\cdot)$. We have for almost every $t \geq t_{0}$
\begin{align*}
	\dfrac{d}{dt} \left( t^{2} \left\lVert \nabla f \left( z \left( t \right) \right) \right\rVert ^{2} \right) & = 2t \left\lVert \nabla f \left( z \left( t \right) \right) \right\rVert ^{2} + t^{2} \left\langle \nabla f \left( z \left( t \right) \right) , \dfrac{d}{dt} \nabla f \left( z \left( t \right) \right) \right\rangle \nonumber \\
	& = 2t \left\lVert \nabla f \left( z \left( t \right) \right) \right\rVert ^{2} - t^{2} \left\langle \dot{z} \left( t \right) , \dfrac{d}{dt} \nabla f \left( z \left( t \right) \right) \right\rangle + t^{2} \left\langle g \left( t \right) , \dfrac{d}{dt} \nabla f \left( z \left( t \right) \right) \right\rangle \nonumber \\
	& \leq 2t \left\lVert \nabla f \left( z \left( t \right) \right) \right\rVert ^{2} - \dfrac{t^{2}}{L} \left\lVert \dfrac{d}{dt} \nabla f \left( z \left( t \right) \right) \right\rVert ^{2} + 4Lt^{2} \left\lVert g \left( t \right) \right\rVert ^{2} + \dfrac{t^{2}}{L} \left\lVert \dfrac{d}{dt} \nabla f \left( z \left( t \right) \right) \right\rVert ^{2} \nonumber \\
	& = 2t \left\lVert \nabla f \left( z \left( t \right) \right) \right\rVert ^{2} + 4Lt^{2} \left\lVert g \left( t \right) \right\rVert ^{2} . \nonumber
\end{align*}
The convergence rate follows by applying once again \cite[Lemma 5.1]{AAS} and by using that $t \mapsto tg(t) \in \mathbb{L}^{2} \left( \left[ t_{0} , + \infty \right[ \right)$.
\end{proof}

\begin{proof}[Proof of Theorem \ref{Theorem regNewton}]
Given $z_{*} \in S$, we define the following function for every $t \geq t_{0}$, which is nonnegative due to the convexity of $f$ and the fact that $v \left( t \right) = \nabla f \left( z \left( t \right) \right)$
\vspace*{-10pt}
\begin{equation*}
	\phi \left( t \right) := \left[ f \left( z_{*} \right) - f \left( z \left( t \right) \right) - \left\langle v \left( t \right) , z_{*} - z \left( t \right) \right\rangle \right] + \dfrac{\lambda}{2} \left\lVert z_{*} - z \left( t \right) \right\rVert ^{2} .
\end{equation*}
According to \eqref{eq:Dyn_Newton_1} and the convexity of $f$ we have for every $t \geq t_0$
\begin{align*}
	\dfrac{d}{dt} \phi \left( t \right) & = - \left\langle v \left( t \right) , \dot{z} \left( t \right) \right\rangle - \left\langle \dot{v} \left( t \right) , z_{*} - z \left( t \right) \right\rangle + \left\langle v \left( t \right) , \dot{z} \left( t \right) \right\rangle - \lambda \left\langle z_{*} - z \left( t \right) , \dot{z} \left( t \right) \right\rangle \nonumber \\
	& = \left\langle v \left( t \right) , z_{*} - z \left( t \right) \right\rangle \leq \inf_{\cH} f - f \left( z \left( t \right) \right) \leq 0 .
\end{align*}
We integrate from $t_{0}$ to $t>t_{0}$ and let $t$ tend to $+\infty$, and obtain \ref{regNewton:int-fun}.

The inequality above also implies that the function $\phi$ is nonincreasing on $\left[ t_{0} , + \infty \right[$, from which we deduce that $\phi(t)$ converges as $t \to +\infty$, and that the trajectory $z(\cdot)$ is bounded.
Let $L>0$ be the Lipschitz constant of $\nabla f$ on a ball that contains the trajectory $z(\cdot)$. It follows from \eqref{eq:Dyn_Newton_1} that for every $t \geq t_0$
\begin{align*}
	\dfrac{d}{dt} \left( f \left( z \left( t \right) \right) - \inf_{\cH} f \right) & = \left\langle v \left( t \right) , \dot{z} \left( t \right) \right\rangle = - \lambda \left\lVert \dot{z} \left( t \right) \right\rVert ^{2} - \left\langle \dot{v} \left( t \right) , \dot{z} \left( t \right) \right\rangle = - \lambda \left\lVert \dot{z} \left( t \right) \right\rVert ^{2} - \left\langle \dfrac{d}{dt} \left( \nabla f \left( z \left( t \right) \right) \right) , \dot{z} \left( t \right) \right\rangle \nonumber \\
	& \leq - \lambda \left\lVert \dot{z} \left( t \right) \right\rVert ^{2} - \dfrac{1}{L} \left\lVert \dfrac{d}{dt} \left( \nabla f \left( z \left( t \right) \right) \right) \right\rVert ^{2} = - \lambda \left\lVert \dot{z} \left( t \right) \right\rVert ^{2} - \dfrac{1}{L} \left\lVert \dot{v} \left( t \right) \right\rVert ^{2} \leq 0 .
\end{align*}
Since $t \mapsto f \left( z \left( t \right) \right) - \inf_{\cH} f$ is nonincreasing, by applying Lemma \ref{lem o(1/t)} in the Appendix we obtain \ref{regNewton:rate-fun}. 

In addition, we derive that
\begin{align*}
	\dfrac{d}{dt} \left( t \left( f \left( z \left( t \right) \right) - \inf_{\cH} f \right) \right) & = f \left( z \left( t \right) \right) - \inf_{\cH} f + \dfrac{d}{dt} \left( f \left( z \left( t \right) \right) - \inf_{\cH} f \right) \nonumber \\
	& \leq f \left( z \left( t \right) \right) - \inf_{\cH} f - \lambda t \left\lVert \dot{z} \left( t \right) \right\rVert ^{2} - \dfrac{1}{L} t \left\lVert \dot{v} \left( t \right) \right\rVert ^{2}
\end{align*}

Integrating again from $t_{0}$ to $t>t_{0}$ and letting $t$ tend to $+\infty$, we obtain \ref{regNewton:int-vel-grad}. Indeed, for every $t \geq t_{0}$ we have $\left\lVert v \left( t \right) \right\rVert ^{2} \leq 2 \lambda^2 \left\lVert \dot{z} \left( t \right) \right\rVert ^{2} + 2 \left\lVert \dot{v} \left( t \right) \right\rVert ^{2}$, 
therefore
\begin{equation}\label{eq:Dyn_Newton_5}
	\int_{t_{0}}^{+ \infty} t \left\lVert v \left( t \right) \right\rVert ^{2} dt = \int_{t_{0}}^{+ \infty} t \left\lVert \nabla f \left( z \left( t \right) \right) \right\rVert ^{2} dt < +\infty .
\end{equation}	

Consequently, for item $iv)$ we can use again Lemma \ref{lem o(1/t)}. Precisely, observe that for every $t \geq t_0$
\begin{equation*}
	\dfrac{d}{dt} \left( \dfrac{1}{2} \left\lVert v \left( t \right) \right\rVert ^{2} \right) = \left\langle v \left( t \right) , \dot{v} \left( t \right) \right\rangle = - \lambda \left\langle \dot{{z}} \left( t \right) , \dot{v} \left( t \right) \right\rangle - \left\lVert \dot{v} \left( t \right) \right\rVert ^{2} \leq 0 ,
\end{equation*}
which means $t \mapsto \frac{1}{2} \left\lVert v \left( t \right) \right\rVert ^{2}$ is nonincreasing. The conclusion follows by using \eqref{eq:Dyn_Newton_5}  and by noticing that
\begin{equation*}
	\left\lVert v \left( t \right) \right\rVert ^{2} = \left\lVert \lambda \dot{z} \left( t \right) + \dot{v} \left( t \right) \right\rVert ^{2} = \lambda^{2} \left\lVert  \dot{z} \left( t \right) \right\rVert ^{2} + \left\lVert \dot{v} \left( t \right) \right\rVert ^{2} + 2 \lambda \left\langle \dot{z} \left( t \right) , \dot{v} \left( t \right) \right\rangle \geq \lambda^{2} \left\lVert  \dot{z} \left( t \right) \right\rVert ^{2} + \left\lVert \dot{v} \left( t \right) \right\rVert ^{2} .
\end{equation*}

Finally, let us recall that $\lim_{t \to + \infty} \phi \left( t \right) \in \R$ exists. Moreover, according to \ref{regNewton:rate-fun}, \ref{regNewton:rate-grad-vel}, and the fact that the trajectory $z(\cdot)$ is bounded, we conclude that 
\begin{equation*}
	\lim_{t \to + \infty} \dfrac{\lambda}{2} \left\lVert z_{*} - z \left( t \right) \right\rVert ^{2} = \lim_{t \to + \infty} \phi \left( t \right) \in \R .
\end{equation*}
This yields the first condition of Opial's lemma.
Now let $\widehat{z}$ be a weak sequential cluster point of the trajectory. Since $v \left( t \right) = \nabla f \left( z \left( t \right) \right)$ for every $t \geq t_{0}$, it follows from the sequential closedness property of $\nabla f$ in the weak $\times$ strong topology on $\cH \times \cH$ that $\widehat{z} \in S$, which proves that the second condition of Opial's lemma is also satisfied. This completes the proof.
\end{proof}

\begin{proof}[Proof of Theorem \ref{thm:cocoercive}]
For the sake of completeness, we briefly sketch the Lyapunov analysis of \eqref{eq:coer_10}.
Given $z_{*} \in S$, let us first fix $T > t_{0}$,  and for every $T \geq t \geq t_{0}$ consider
$$
\cE_{T} \left( t \right) := \dfrac{1}{2} \left\lVert z \left( t \right) - z_{*} \right\rVert ^{2} + \int_{t}^{T} \left\langle z \left( u \right) - z_{*} , g \left( u \right) \right\rangle du .
$$
For every $T \geq t \geq t_{0}$ it holds
\begin{align*}
	\dfrac{d}{dt} \cE_{T} \left( t \right) & = \left\langle z \left( t \right) - z_{*} , \dot{z} \left( t \right) \right\rangle - \left\langle z \left( t \right) - z_{*} , g \left( t \right) \right\rangle = - \left\langle z \left( t \right) - z_{*} , M \left( z \left( t \right) \right) \right\rangle .
\end{align*}
By integration from $t_{0}$ to $t$, we get for $C_6:= \dfrac{1}{2} \left\lVert z \left( t_0 \right) - z_{*} \right\rVert ^{2}$
\begin{align}
	\dfrac{1}{2} \left\lVert z \left( t \right) - z_{*} \right\rVert ^{2} + \int_{t_{0}}^{t} \left\langle z \left( t \right) - z_{*} , M \left( z \left( t \right) \right) \right\rangle du & \leq C_6 + \int_{t_{0}}^{t} \left\langle z \left( u \right) - z_{*} , g \left( u \right) \right\rangle du \nonumber \\
	& \leq C_6 + \int_{t_{0}}^{t} \left\lVert z \left( u \right) - z_{*} \right\rVert \left\lVert g \left( u \right) \right\rVert du . \label{oppert 02}
\end{align}
By Gronwall's lemma (see Lemma \ref{lem GB} in the Appendix), we conclude that for every $t \geq t_{0}$
\vspace{-10pt}
\begin{equation*}
	\left\lVert z \left( t \right) - z_{*} \right\rVert \leq \sqrt{2C_6} + \int_{t_{0}}^{t} \left\lVert g \left( u \right) \right\rVert du \leq \sqrt{2C_6} + \int_{t_{0}}^{+ \infty} \left\lVert g \left( u \right) \right\rVert du < + \infty .
\end{equation*}
This gives the boundedness of the trajectory. Using this assertion in \eqref{oppert 02}, we deduce that for every $t \geq t_0$
\begin{align*}
	\int_{t_{0}}^{t} \left\langle z \left( u \right) - z_{*} , M \left( z \left( u \right) \right) \right\rangle du & \leq C_6 + \sup_{t \geq t_{0}} \left\lVert z \left( u \right) - z_{*} \right\rVert \int_{t_{0}}^{t} \left\lVert g \left( u \right) \right\rVert du \nonumber \\
	& \leq C_6 + \sup_{t \geq t_{0}} \left\lVert z \left( u \right) - z_{*} \right\rVert \int_{t_{0}}^{+ \infty} \left\lVert g \left( u \right) \right\rVert du < + \infty .
\end{align*}
According to the $\rho$-cocoercivity of $M$, and by letting $t$ go to infinity, we obtain
\begin{equation}\label{eq:cocoer_100}
	\rho \int_{t_{0}}^{+ \infty} \left\lVert M \left( z \left( t \right) \right) \right\rVert ^{2} dt \leq \int_{t_{0}}^{+ \infty} \left\langle z \left( t \right) - z_{*} , M \left( z \left( t \right) \right) \right\rangle dt < + \infty.
\end{equation}
Therefore, $\liminf_{t \to + \infty} t \left\lVert M \left( z \left( t \right) \right) \right\rVert ^{2} = 0$.
Let us now compute the time derivative for every $t \geq t_0$
\begin{align*}
	\dfrac{d}{dt} \left( \dfrac{1}{2} t \left\lVert M \left( z \left( t \right) \right) \right\rVert ^{2} \right) & = \dfrac{1}{2}\left\lVert M \left( z \left( t \right) \right) \right\rVert ^{2} + t \left\langle M \left( z \left( t \right) \right) , \dfrac{d}{dt} \left( M \left( z \left( t \right) \right) \right) \right\rangle \nonumber \\
	& = \dfrac{1}{2}\left\lVert M \left( z \left( t \right) \right) \right\rVert ^{2} - t \left\langle \dot{z} \left( t \right) , \dfrac{d}{dt} \left( M \left( z \left( t \right) \right) \right) \right\rangle + t \left\langle g \left( t \right) , \dfrac{d}{dt} \left( M \left( z \left( t \right) \right) \right) \right\rangle .
\end{align*}

Using the cocoercivity of $M$ and the Cauchy-Schwarz inequality, we deduce that
\begin{align*}
	- t \left\langle \dot{z} \left( t \right) , \dfrac{d}{dt} \left( M \left( z \left( t \right) \right) \right) \right\rangle + t \left\langle g \left( t \right) , \dfrac{d}{dt} \left( M \left( z \left( t \right) \right) \right) \right\rangle
	& \leq - \rho t \left\lVert \dfrac{d}{dt} \left( M \left( z \left( t \right) \right) \right) \right\rVert ^{2} + t \left\lVert g \left( t \right) \right\rVert \left\lVert \dfrac{d}{dt} \left( M \left( z \left( t \right) \right) \right) \right\rVert \nonumber \\
	& \leq \dfrac{1}{4 \rho} t \left\lVert g \left( t \right) \right\rVert ^{2} .
\end{align*}

The right hand side of the above inequality belongs to $\mathbb{L}^{1} \left( \left[ t_{0} , + \infty \right[ \right)$ thanks to \eqref{oppert 01} and \eqref{eq:cocoer_100}. According to \cite[Lemma 5.1]{AAS} we have that $\lim_{t \to + \infty} t \left\lVert M \left( z \left( t \right) \right) \right\rVert ^{2} \in \mathbb{R}$ exists, and therefore it must be equal to $0$
$$\left\lVert M \left( z \left( t \right) \right) \right\rVert = o \left( \dfrac{1}{\sqrt{t}} \right) \textrm{ as } t \to + \infty .
$$
This also means that $\lim_{t\to+\infty} M \left( z \left( t \right) \right) =0$. According to the sequential closedness property of the graph of $M$ in the weak $\times$ strong topology on $\cH \times \cH$,  every weak limit point of the trajectory $z$ belongs to $S$.
It remains to verify the first condition of Opial's lemma.
Given $z_{*} \in S$, observe that for every $t \geq t_0$ it holds
\begin{align*}
	\dfrac{d}{dt} \left( \dfrac{1}{2} \left\lVert z \left( t \right) - z_{*} \right\rVert ^{2} \right) & = \left\langle z \left( t \right) - z_{*} , \dot{z} \left( t \right) \right\rangle = - \left\langle z \left( t \right) - z_{*} , M \left( z \left( t \right) \right) \right\rangle + \left\langle z \left( t \right) - z_{*} , g \left( t \right) \right\rangle \nonumber \\
	& \leq \sup_{t \geq t_{0}} \left\lVert z \left( t \right) - z_{*} \right\rVert \cdot \left\lVert g \left( t \right) \right\rVert .
\end{align*}
This shows that the limit $\lim_{t \to + \infty} \left\lVert z \left( t \right) - z_{*} \right\rVert \in \mathbb{R}$ thanks to \eqref{oppert 01} and \cite[Lemma 5.1]{AAS}.
\end{proof}


\bibliographystyle{elsarticle-harv}
\biboptions{sort&compress}
\bibliography{ABN_scaling_aver_references}







\end{document}